\documentclass[11pt]{article}
\usepackage{color}
\usepackage{enumerate}
\usepackage{amsthm,amsmath,amssymb,mathabx}
\usepackage{epsfig}
\usepackage{rotating}
\usepackage{vmargin}
\usepackage{authblk}
\usepackage{graphicx}
\usepackage{caption}
\setcounter{MaxMatrixCols}{20}
\numberwithin{equation}{section}

\usepackage[english]{babel}
\usepackage[T1]{fontenc} 
\usepackage[utf8]{inputenc}
\usepackage{tikz-cd}
	\usetikzlibrary{decorations.markings}
\usepackage{tikz}
	\usetikzlibrary{tikzmark, calc}
	\usetikzlibrary{matrix}
	
\colorlet{col0}{blue!80}	
\colorlet{col1}{gray!50}
\colorlet{col2}{pink!50}
\colorlet{col3}{yellow}
\colorlet{col4}{red!50}

\makeatletter
\renewcommand*\env@matrix[1][*\c@MaxMatrixCols c]{%
  \hskip -\arraycolsep
  \let\@ifnextchar\new@ifnextchar
  \array{#1}}
\makeatother

\setmarginsrb{2.5cm}{2cm}{2.5cm}{2cm}{0cm}{2cm}{0cm}{1cm}

\theoremstyle{plain}
\newtheorem{theorem}{Theorem}
\newtheorem{lemma}[theorem]{Lemma}
\newtheorem{corollary}[theorem]{Corollary}
\newtheorem{proposition}[theorem]{Proposition}

\theoremstyle{definition}
\newtheorem{definition}[theorem]{Definition}
\newtheorem{example}[theorem]{Example}
\newtheorem{remark}[theorem]{Remark}

\newtheorem{question}[]{Question}

\newcommand{\N}{\mathbb{N}}
\newcommand{\Z}{\mathbb{Z}}
\newcommand{\for}{\mathrm{\ for \ }}
\newcommand{\dir}[3]{#1_{{\mathbf{#2},\mathbf{#3}}}}
 \newcommand{\morph}[8]
 {
 	\tikzstyle{morphi}=[draw=black,line width = 0.5pt]
 	\tikzstyle{mo}=[shape=circle,fill=none,draw=none]
 	\tikzstyle{1}=[shape=rectangle,fill=black,draw=black,,minimum size=3,inner sep=4pt]
 	\tikzstyle{0}=[shape=rectangle,fill=white,draw=black,minimum size=3,inner sep=4pt]
 	\tikzstyle{?}=[shape=rectangle,fill=gray!50,draw=black,minimum size=0.5,inner sep=4pt]
 	\tikzstyle{*}=[shape=rectangle,fill=gray!50,draw=black,minimum size=0.5,inner sep=4pt]
 	\tikzstyle{.}=[shape=rectangle,fill=gray!50,draw=black,minimum size=0.5,inner sep=4pt]
 	
 	\node[shape=rectangle,fill=black,draw=black,,minimum size=3,inner sep=1pt] at (0.4,0){1};
 	\draw[->,line width = 1.2pt,draw=black] (0.6,0) to (1,0);
 	\node[#1] at (1.25,-0.03) {};
 	\node[#2] at (1.25,0.25) {};
 	\node[#4] at (1.54,0.25) {};
 	\node[#3] at (1.54,-0.03) {};
 	\begin{scope}[yshift=-0.8cm]
 		\node[shape=rectangle,fill=white,draw=black,,minimum size=3,inner sep=3.5pt]  at (0.4,0){};
 		\draw[->,line width = 1.2pt,draw=black] (0.6,0) to (1,0);
 		\node[#5] at (1.25,-0.03) {};
		\node[#6] at (1.25,0.25) {};
		\node[#8] at (1.54,0.25) {};
		\node[#7] at (1.54,-0.03) {};
 	\end{scope}
}
\tikzset{negated/.style={
		decoration={markings,
			mark= at position 0.5 with {
				\node[transform shape] (tempnode) {$\backslash$};
			}
		},
		postaction={decorate}
	}
}

\author[1]{\'Emilie Charlier} \author[2]{Svetlana Puzynina}
\author[1]{\'Elise Vandomme}

\affil[1]{Universit\'e de Li\`ege, Belgium}
\affil[2]{Saint Petersburg State University, Russia}

\title{Recurrence along directions in multidimensional words}
\date{\today}

\begin{document}

\thispagestyle{empty}
\maketitle

\begin{abstract}
In this paper we introduce and study new notions of uniform recurrence in multidimensional words. A $d$-dimensional word is called \emph{uniformly recurrent} if for all $(s_1,\ldots,s_d)\in\N^d$ there exists $n\in\N$ such that each block of size $(n,\ldots,n)$ contains the prefix of size $(s_1,\ldots,s_d)$. We are interested in a modification of this property. Namely, we ask that for each rational direction $(q_1,\ldots,q_d)$, each rectangular prefix occurs along this direction in positions $\ell(q_1,\ldots,q_d)$ with bounded gaps. Such words are called \emph{uniformly recurrent along all directions}. We provide several constructions of multidimensional words satisfying this condition, and more generally, a series of four increasingly stronger conditions. In particular, we study the uniform recurrence along directions of multidimentional rotation words and of fixed points of square morphisms.
\end{abstract}

\section{Introduction}

Combinatorics on words in one dimension is a well-studied field of theoretical computer science with its origins in the early 20th century. The study of bidimensional words is less developed, even though many concepts and results are naturally extendable from the unidimensional case (see e.g.\  \cite{Berthe--Vuillon--2000,Cassaigne--1999,Charlier--Karki--Rigo--2010,Durand--Rigo--2013,Labbe--2018,Muchnik--2003,Semenov--1977,Vuillon--1998}). However, some words problems become much more difficult in dimensions higher than one. One of such questions is the connection between local complexity and periodicity. In dimension one, the classical theorem of Morse and Hedlund states that if for some $n$ the number of distinct length-$n$ blocks of an infinite word is less than or equal to $n$, then the word is periodic. In the bidimensional case a similar assertion is known as Nivat's conjecture, and many efforts are made by scientists for checking this hypothesis \cite{Cyr--Kra--2015,Kari--Szabados--2015,Nivat}. In this paper, we introduce and study new notions of multidimensional uniform recurrence.

A first and natural attempt to generalize the notion of (simple) recurrence to the multidimensional setting quickly turns out to be rather unsatisfying. Recall that an infinite word $w\colon\N\to A$ (where $A$ is a finite alphabet) is said to be \emph{recurrent} if each prefix occurs at least twice (and hence every factor occurs infinitely often). A straightforward extension of this definition is to say that a bidimensional infinite word is recurrent whenever each rectangular prefix occurs at least twice (and hence every rectangular factor occurs infinitely often). However, with such a definition of bidimensional recurrence, the following bidimensional infinite word is considered as recurrent, even though any column is not in the unidimensional sense of recurrence. 
\[
 \begin{matrix}
 \vdots & \vdots & \vdots & \vdots & \\
 0 & 0 & 0 & 0 & \cdots \\
 0 & 0 & 0 & 0 & \cdots \\
 0 & 0 & 0 & 0 & \cdots \\
 1 & 1 & 1 & 1 & \cdots
 \end{matrix}
\]

In order to avoid this kind of undesirable phenomenon, a common strengthening is to ask that every prefix occurs uniformly, see for example \cite{Berthe--Vuillon--2000,Durand--Rigo--2013}. In the present work, we investigate several notions of recurrence of multidimensional infinite words $w\colon\N^d\to A$, generalizing the usual notion of uniform recurrence of unidimensional infinite words. 

This paper is organized as follows. In Section~\ref{sec:def}, we define two new notions of uniform recurrence of multidimensional infinite words: the URD words and the SURD words. We also make some first observations in the bidimensional setting. In Section~\ref{sec:origin}, we show that these two new notions of recurrence along directions do not depend on the choice of the origin. This leads us to the definition of the even stronger notion of SSURDO words. In Section~\ref{sec:gcd}, we prove that all multidimensional words obtained by placing some uniformly recurrent word along every rational direction are URD. In Section~\ref{sec:rotation}, we show that all multidimensional rotation words are URD but not SURD. Thus, the notion of SURD words is indeed stronger than that of URD words, justifying the introduced terminology. In Section~\ref{sec:morphism}, we study fixed points of multidimensional square morphisms. In particular, we provide some infinite families of SURD words. We provide a complete characterization of SURD bidimensional infinite words that are fixed points of square morphisms of size $2$. In Section~\ref{sec:construction}, we show how to build uncountably many SURD bidimensional infinite words. In particular, the family of bidimensional infinite words so-obtained contains uncountably many non-morphic SURD elements. We end our study by discussing six open problems in Section~\ref{sec:perspectives}, including potential links with return words and symbolic dynamical aspects.

\section{Definitions and first observations}
\label{sec:def}

Here and throughout the text, $A$ designates an arbitrary finite alphabet and $d$ is a positive integer. For $m,n\in\N$, the notation $[\![m,n]\!]$ designates the interval of integers $\{m,\ldots,n\}$ (which is considered empty for $n<m$). We write $(s_1,\ldots,s_d)\le (t_1,\ldots,t_d)$ (resp.\ $(s_1,\ldots,s_d)< (t_1,\ldots,t_d)$) if $s_i\le t_i$ (resp.\ $s_i< t_i$) for each $i\in[\![1,d]\!]$.

A \emph{$d$-dimensional infinite word} over $A$ is a map $w\colon\N^d\to A$. A \emph{$d$-dimensional finite word} over $A$ is a map $w\colon[\![0,s_1-1]\!]\times \cdots\times [\![0,s_d-1]\!]\to A$, for some $(s_1,\ldots,s_d)\in\N^d$, which is called the \emph{size} of $w$. A finite word $f$ of size $(s_1,\ldots,s_d)$ is a \emph{factor} of a $d$-dimensional infinite word $w$ if there exists  $\mathbf{p}\in\N^d$ such that for each $\mathbf{i}\in[\![0,s_1-1]\!]\times \cdots\times [\![0,s_d-1]\!]$, we have $f(\mathbf{i})=w(\mathbf{p}+\mathbf{i})$. In this case, we say that the factor $f$ occurs at \emph{position} $\mathbf{p}$ in $w$. Similarly, a \emph{factor} of a $d$-dimensional finite word $w$ of size $(t_1,\ldots,t_d)$ is a finite word $f$ of some size $(s_1,\ldots,s_d)\le(t_1,\ldots,t_d)$ for which there exists $\mathbf{p}\in[\![0,t_1-s_1]\!]\times \cdots\times [\![0,t_d-s_d]\!]$ such that for each $\mathbf{i}\in[\![0,s_1-1]\!]\times \cdots\times [\![0,s_d-1]\!]$, we have $f(\mathbf{i})=w(\mathbf{p}+\mathbf{i})$. In both cases (infinite and finite), if $\mathbf{p}=(0,\ldots,0)$ then the factor $f$ is said to be a \emph{prefix} of $w$. In some places, for the sake of clarity, we will allow ourselves to  write $w_\mathbf{i}$ instead of $w(\mathbf{i})$.

\begin{remark}
In general, a factor need not be rectangular, i.e.\ of the form $[\![0,s_1-1]\!]\times \cdots\times [\![0,s_d-1]\!]$, but could be any polytope. Indeed, any occurrence of any given polytope is contained in a larger rectangular factor. If we are interested in bounding the gaps between occurrences of the polytope, then a bound on the gaps of the larger rectangular factor is sufficient. So, without loss of generality we can restrict our attention to rectangular factors only.

Sometimes, multidimensional words are considered over $\mathbb{Z}^d$, i.e.\ $w\colon\mathbb{Z}^d\to A$. Although in our considerations it is more natural to consider one-way infinite words, since for example we will make use of fixed points of morphisms, most of our results and notions can be straightforwardly extended to words over $\mathbb{Z}^d$. For example, general relations between the considered notions hold in $\mathbb{Z}^d$ (Figure~\ref{fig:links_recurrence}), as well as our results for rotation words (Section~\ref{sec:rotation}) and non-morphic SURD examples (Section~\ref{sec:construction}).
\end{remark}

The following notion of uniform recurrence of multidimensional infinite words was studied by many authors, see for example~\cite{Berthe--Vuillon--2000,Durand--Rigo--2013}.

\begin{definition}[UR]
A $d$-dimensional infinite word $w$ is \emph{uniformly recurrent} if for every prefix $p$ of $w$, there exists a positive integer $b$ such that every factor of $w$ of size $(b,\ldots, b)$ contains $p$ as a factor.
\end{definition}

In the previous definition, it is clearly equivalent to ask the same for every factor and not only every prefix. Whenever $d=1$, this definition corresponds to the usual notion of uniform recurrence of infinite words. In the bidimensional setting, the uniform recurrence of the word is not linked to the uniform recurrence of all rows and columns. On the one hand, the fact that rows and columns of a bidimensional word $w\colon\N^2\to A$ are uniformly recurrent (in the unidimensional sense) does not imply that $w$ is UR.

\begin{remark}
\label{rem:convention-2D}
We choose the convention of representing a bidimensional word $w\colon\N^2\to A$ by placing the rows from bottom to top, and the columns from left to right (as for Cartesian coordinates). See Figure~\ref{fig:rep2D}.
\begin{figure}[htb]
\[
    \begin{matrix}
 	 \vdots & \vdots & \vdots & \vdots \\
	 w(0,3) & w(1,3) & w(2,3) & w(3,3) & \cdots \\
	 w(0,2) & w(1,2) & w(2,2) & w(3,2) & \cdots \\
	 w(0,1) & w(1,1) & w(2,1) & w(3,1) & \cdots \\
	 w(0,0) & w(1,0) & w(2,0) & w(3,0) & \cdots 
\end{matrix}
\]
\caption{Convention for the representation of bidimensional words.}
\label{fig:rep2D}
\end{figure}
\end{remark}

\begin{proposition}
\label{prop:rows_columns_UR}
Let $w\colon\N^2\to\{0,1\}$ be the bidimensional word obtained by alternating two kinds of rows: $1F$ and $0F$ where $F=01001010\cdots$ is the Fibonacci word, i.e.\ $F$ is the fixed point of the morphism $0\mapsto 01,\, 1\mapsto 0$ (see Figure~\ref{fig:URrows-nonUR}). The rows and the columns of $w$ are all uniformly recurrent but $w$ is not UR.
\begin{figure}[htb]
\[
\begin{matrix}
 	 \vdots & \vdots & \vdots & \vdots & \vdots & \vdots & \vdots & \vdots & \vdots\\
	  0 & 0 & 1 & 0 & 0 & 1 & 0 & 1 & 0 & \cdots \\
	  1 & 0 & 1 & 0 & 0 & 1 & 0 & 1 & 0 & \cdots \\
	  0 & 0 & 1 & 0 & 0 & 1 & 0 & 1 & 0 & \cdots \\
	  1 & 0 & 1 & 0 & 0 & 1 & 0 & 1 & 0 & \cdots \\
	  0 & 0 & 1 & 0 & 0 & 1 & 0 & 1 & 0 & \cdots \\
	  1 & 0 & 1 & 0 & 0 & 1 & 0 & 1 & 0 & \cdots 
\end{matrix}
\]
\caption{A non UR bidimensional word having uniformly recurrent rows and columns.}
\label{fig:URrows-nonUR}
\end{figure}
\end{proposition}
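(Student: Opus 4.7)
The plan is to prove the two assertions separately: (i) all rows and all columns of $w$ are uniformly recurrent; and (ii) $w$ is not UR.

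For (i), I would first handle the columns, which are easy. Column $0$ is the periodic sequence $10101\cdots$ (reading from bottom to top), hence uniformly recurrent. For each $c\ge 1$, the $c$th letter of $1F$ equals the $c$th letter of $0F$ (both equal $F(c-1)$), so column $c$ of $w$ is the constant sequence with value $F(c-1)$, which is trivially uniformly recurrent.

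For the rows, each row equals $0F$ or $1F$, so it suffices to show that both $0F$ and $1F$ are uniformly recurrent. It is classical that $F$ itself is uniformly recurrent, so I would reduce the problem by showing that every prefix of $aF$ (for $a\in\{0,1\}$) is a factor of $F$: combined with the uniform recurrence of $F$, this yields the uniform recurrence of $aF$. To obtain these left-extensions, I would exploit the Fibonacci morphism $\sigma\colon 0\mapsto 01$, $1\mapsto 0$. Since $F=\sigma(F)$ and both $00$ and $10$ are factors of $F$, the words $\sigma^n(00)=\sigma^n(0)\sigma^n(0)$ and $\sigma^n(10)=\sigma^{n-1}(0)\sigma^n(0)$ are factors of $F$ for every $n\ge 1$. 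For $n$ even, the last letter of $\sigma^n(0)$ is $0$ and the last letter of $\sigma^{n-1}(0)$ is $1$, so the above factors contain respectively $0\,\sigma^n(0)$ and $1\,\sigma^n(0)$. Since $\sigma^n(0)$ is a prefix of $F$ whose length tends to infinity, every prefix of $0F$ and of $1F$ is a factor of $F$, as required.

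For (ii), I would exhibit a prefix of $w$ whose occurrences are confined to column $0$. Take the prefix $p$ of $w$ of size $(1,2)$, so that $p(0,0)=1$ and $p(0,1)=0$. If $p$ occurs at some position $(c,r)$ in $w$, then $w(c,r)=1$ and $w(c,r+1)=0$, hence $w(c,r)\ne w(c,r+1)$. But for $c\ge 1$, we have $w(c,r)=F(c-1)=w(c,r+1)$, a contradiction. Thus $p$ only occurs in column $0$ of $w$. Consequently, for any $b\ge 1$, the factor of $w$ of size $(b,b)$ at position $(1,0)$ entirely avoids column $0$ and therefore does not contain $p$, so $w$ is not UR.

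The main obstacle is the proof that $0F$ and $1F$ are uniformly recurrent: once the relevant prefixes have been exhibited as factors of $F$ via the Fibonacci morphism (or, alternatively, via the Sturmian fact that every prefix of $F$ is left-special), the rest of the argument is routine.
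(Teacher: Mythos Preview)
Your proof is correct and follows the same overall structure as the paper's: the columns are periodic, the rows are $0F$ and $1F$ (you supply a self-contained argument that these are uniformly recurrent, whereas the paper simply cites this as well known), and a small prefix is shown to occur only near the leftmost column. The one minor variation is the witness for non-UR: you use the $(1,2)$-prefix, confined to column~$0$, while the paper uses the $(2,2)$-prefix $\left[\begin{smallmatrix}0&0\\1&0\end{smallmatrix}\right]$, confined to the first two columns; your choice gives a slightly cleaner argument.
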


\begin{proof}
The word $w$ is not UR since the square prefix $\left[\begin{smallmatrix}0&0\\1&0\end{smallmatrix}\right]$ only occurs within the first two columns. The columns of $w$ are uniformly recurrent since they are periodic. It is well known that the words $1F$ and $0F$ are uniformly recurrent, hence the rows of $w$ are uniformly recurrent.
\end{proof}

On the other hand, the fact that a bidimensional infinite word is UR does not imply that each of its rows/columns is uniformly recurrent either. The construction given by the following proposition is a modification of unidimensional Toeplitz words.

\begin{proposition}
\label{prop:UR_rows_columns}
Let $w\colon\N^2\to\{0,1\}$ be the bidimensional word constructed as follows. The $n$-th row (with $n\in\N$) is indexed by $k$ if $n\equiv 2^k\pmod{2^{k+1}}$ and is indexed by $-1$ if $n=0$. Let $u_k=(10^{2^k-1})^\omega$ for $k\ge0$ and $u_{-1}=10^\omega$. Now fill the rows indexed by $k$ with the words $u_k$ (see Figure~\ref{fig:UR-non-recurrent-rows}). The bidimensional word $w$ is UR, but its first row is not recurrent. 
\begin{figure}[htb]
\[
\begin{array}{c|c|cccccccccccccc}
\vdots &  \vdots & \vdots & \vdots & \vdots & \vdots & \vdots & \vdots & \vdots & \vdots & \vdots & \vdots & \vdots& \vdots \\
13 &0 & 1&1&1&1&1&1&1&1&1&1&1&1 & \cdots \\
12 &2 & 1&0&0&0&1&0&0&0&1&0&0&0 & \cdots \\
11 &0 & 1&1&1&1&1&1&1&1&1&1&1&1 & \cdots \\
10 &1 & 1&0&1&0&1&0&1&0&1&0&1&0 & \cdots \\
9 &0 & 1&1&1&1&1&1&1&1&1&1&1&1 & \cdots \\
8 &3 & 1&0&0&0&
0&0&0&0&1&0&0&0 & \cdots \\
7 &0 & 1&1&1&1&1&1&1&1&1&1&1&1 & \cdots \\
6 &1 & 1&0&1&0&1&0&1&0&1&0&1&0 & \cdots \\
5 &0 & 1&1&1&1&1&1&1&1&1&1&1&1 & \cdots \\
4 &2 & 1&0&0&0&1&0&0&0&1&0&0&0 & \cdots \\
3 &0 & 1&1&1&1&1&1&1&1&1&1&1&1 & \cdots \\
2 &1 & 1&0&1&0&1&0&1&0&1&0&1&0 & \cdots \\
1 &0 & 1&1&1&1&1&1&1&1&1&1&1&1 & \cdots \\ 
0 & -1 & 1&0&0&0&0&0&0&0&0&0&0&0 & \cdots \\
\hline
n&k & u_k 
\end{array}
\]
\caption{A UR bidimensional infinite word with a non-recurrent row.}
\label{fig:UR-non-recurrent-rows}
\end{figure}
\end{proposition}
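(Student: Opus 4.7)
The plan splits into two nearly independent parts. For the non-recurrence of row $0$: by construction, row $0$ of $w$ is $u_{-1}=10^\omega$, in which the letter $1$ occurs only at column $0$, so this row is not recurrent.

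For the uniform recurrence of $w$, I fix an arbitrary prefix $p$ of size $(s,t)$ and let $K$ be the least non-negative integer with $2^K\ge\max(s,t)$. Writing $v_2(n)$ for the $2$-adic valuation of $n\in\N$, recall that row $n$ of $w$ equals $u_{v_2(n)}$ for every $n\ge 1$. I claim that $p$ occurs at every position $(p_1,p_2)$ where $p_1$ is a non-negative multiple of $2^K$ and $p_2$ is a positive multiple of $2^K$. Granting the claim, any factor of $w$ of size $(b,b)$ with $b\ge 2^K+\max(s,t)$ contains $p$: in any such factor starting at $(a_1,a_2)$, the range $[\![a_1,a_1+b-s]\!]$ has at least $2^K$ integers and hence contains a multiple of $2^K$, while $[\![a_2,a_2+b-t]\!]$ has at least $2^K+1$ integers and hence contains a positive multiple of $2^K$.

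To verify the claim, I check row by row that the length-$s$ factor of row $p_2+i$ of $w$ starting at column $p_1$ coincides with row $i$ of $p$, for each $i\in[\![0,t-1]\!]$. For $i=0$, row $0$ of $p$ equals $10^{s-1}$; since $v_2(p_2)\ge K$, the period $2^{v_2(p_2)}$ of $u_{v_2(p_2)}$ is at least $s$, and $p_1$ is a multiple of it, so the length-$s$ factor of row $p_2$ at column $p_1$ is exactly $10^{s-1}$. For $i\in[\![1,t-1]\!]$, the inequality $0<i<2^K\le 2^{v_2(p_2)}$ yields $v_2(i)<v_2(p_2)$, whence $v_2(p_2+i)=v_2(i)$, so row $p_2+i$ is the same word $u_{v_2(i)}$ as row $i$; since $p_1$ is a multiple of $2^{v_2(i)}$, the factor of length $s$ at column $p_1$ coincides with the length-$s$ prefix of $u_{v_2(i)}$, which is precisely row $i$ of $p$.

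The only mild bookkeeping issue is to keep row $0$ apart: one must force $p_2>0$ in the claim so that row $p_2$ is of the form $u_k$ with $k\ge 0$ rather than the exceptional $u_{-1}$, while simultaneously exploiting the exceptional nature of $u_{-1}$ to conclude that row $0$ itself is not recurrent.
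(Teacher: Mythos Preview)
Your approach is cleaner than the paper's (which routes through the auxiliary word $w'$ obtained by deleting row~$0$), but there is a genuine gap in the $i=0$ step of your claim. You assert that ``$p_1$ is a multiple of it'', where ``it'' is the period $2^{v_2(p_2)}$. This is false in general: you only know $p_1$ is a multiple of $2^K$, while $v_2(p_2)$ can exceed $K$. Concretely, take $s=t=2$, so $K=1$, and try $(p_1,p_2)=(2,4)$. Then $v_2(p_2)=2$, row~$4$ is $u_2=(1000)^\omega$, and its length-$2$ factor at column~$2$ is $00$, not the required $10$. So your claim that $p$ occurs at \emph{every} such $(p_1,p_2)$ is wrong.

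The repair is minor: restrict $p_2$ to satisfy $v_2(p_2)=K$ exactly, i.e.\ $p_2\equiv 2^K\pmod{2^{K+1}}$. Then the period of row~$p_2$ is precisely $2^K$, and $p_1$ really is a multiple of it; your argument for $i\ge 1$ is unaffected since it only used $v_2(p_2)\ge K$. With this restriction, any window of height $2^{K+1}$ contains a suitable $p_2$, so the bound becomes $b=2^{K+1}+\max(s,t)$ rather than $2^K+\max(s,t)$. This is essentially the periodic network the paper exploits (periods $(2^{N+1},0)$ and $(0,2^{N+1})$), but reached without the detour through~$w'$.
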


\begin{proof}
Consider first the bidimensional infinite word $w'$ composed of the rows $u_k$ with $k\ge 0$, that is, the word $w$ without its first row. We show that each prefix of $w'$ appears according to a square network. Note that this network argument is also used in the proof of Proposition~\ref{proposition:toeplitz}. In $w'$, let $p$ be a prefix of some size $(s_1,s_2)\in\N^2$ and $N=\max(\lceil \log_2(s_1) \rceil,\lceil \log_2(s_2) \rceil)$. The prefix $p'$ of size $(2^N,2^N)$ appears periodically according to the periods $(2^{N+1},0)$ and $(0,2^{N+1})$. Therefore each factor of $w'$ of size  $(2^{N+1}+2^N-1,2^{N+1}+2^N-1)$ contains $p'$. So it contains $p$ as well. Hence $w'$ is UR.

Now let $p$ denote a prefix of $w$ of some size $(s_1,s_2)\in\N^2$. Let $N=\max(\lceil \log_2(s_1) \rceil,\lceil \log_2(s_2) \rceil)$. Using the previous paragraph, we know that the prefix of $w'$ of size $(2^N,2^N)$ occurs with periods $(2^{N+1},0)$ and $(0,2^{N+1})$. Since the $2^{N+1}$-th row of $w$ is filled with the infinite word $u_{N+1}=(10^{2^{N+1}-1})^\omega$ and that $2^{N+1}>s_1$, the prefix $p$ also appears in position $(0,2^{N+1})$ in $w$, i.e.\ in position $(0,2^{N+1}-1)$ in $w'$. As $w'$ is UR, $p$ occurs within every factor of $w'$ of size $(n,n)$ for some $n\in\N$. As $w$ is composed of $w'$ with an additional row $u_{-1}$, the prefix $p$ of $w$ occurs also within every factor of $w$ of size $(n+1,n+1)$.
\end{proof}

In order to obtain the uniform recurrence of all rows and columns in a bidimensional infinite word, we introduce a different version of uniform recurrence of multidimensional infinite words, which involves directions. Throughout this text, when we talk about a \emph{direction} $\mathbf{q}=(q_1,\ldots,q_d)$, we implicitly assume that $q_1,\ldots,q_d$ are coprime nonnegative integers. For the sake of conciseness, if $\mathbf{s}=(s_1,\ldots,s_d)$, we  write $[\![\mathbf{0},\mathbf{s}-\mathbf{1}]\!]$ in order to designate the $d$-dimensional interval $[\![0,s_1-1]\!]\times \cdots\times [\![0,s_d-1]\!]$. In particular, we set $\mathbf{0}=(0,\ldots,0)$ and $\mathbf{1}=(1,\ldots,1)$.

In what follows, we will use the following notation. Let $w\colon\N^d\to A$ be a $d$-dimensional infinite word, $\mathbf{s}\in\N^d$ and $\mathbf{q}\in\N^d$ be a direction. The \emph{word along the direction $\mathbf{q}$ with respect to the size $\mathbf{s}$ in $w$} is the unidimensional infinite word $\dir{w}{q}{s} \colon \N \to A^{[\![\mathbf{0},\mathbf{s}-\mathbf{1}]\!]}$, where elements of $A^{[\![\mathbf{0},\mathbf{s}-\mathbf{1}]\!]}$ are considered as letters, defined by
\[
	\forall \ell \in\N,\ 
	\forall \mathbf{i}\in [\![\mathbf{0},\mathbf{s}-\mathbf{1}]\!],\ 
	(\dir{w}{q}{s}(\ell))(\mathbf{i})=w(\mathbf{i}+\ell \mathbf{q}).
\]
See Figure~\ref{fig:def-wqs} for an illustration in the bidimensional case.
\begin{figure}[htb]
\centering
\scalebox{0.7}{
\begin{tabular}{cc}
\begin{tikzpicture}[scale=1]
	\tikzstyle{every node}=[shape=rectangle,fill=none,draw=none,minimum size=0cm,inner sep=2pt]
	\tikzstyle{every path}=[draw=black,line width = 0.5pt]
	\foreach \x in {0,1,2,3}
{
	\draw[fill=gray!30] (\x*0.3*7,\x*0.3*2.8) rectangle (\x*0.3*7+1.6,\x*0.3*2.8+1.2);	
	\node at (\x*0.3*7+0.8,\x*0.3*2.8+1.4) {$y_\x$};
}	
	\node[fill=white] at (0.8,1.4){$y_0=p$};
	\draw (0,0) to (1.2*7,0);
	\draw (0,0) to (0,5.8);

	\tikzstyle{every path}=[draw=red,line width = 1pt]
	\draw (0,0.01) to (1.2*7,1.2*2.8);
	\node at (8.6,3.2) {\color{red}$\ell \mathbf{q}$} ;
	\tikzstyle{every path}=[draw=black,line width = 1pt, ->]
	\draw[<->] (0,-0.2)  to  node [below] {$s_1$}  (1.6,-0.2);
	\draw[<->] (1.6+0.2,0)  to  node [right] {$s_2$}  (1.6+0.2,1.2);
	\end{tikzpicture}
	
	& \hspace{1cm}
\begin{tikzpicture}[scale=1]
\tikzstyle{every node}=[shape=rectangle,fill=none,draw=none,minimum size=0cm,inner sep=2pt]
\tikzstyle{every path}=[draw=none,line width = 0.5pt]
\draw[fill=gray!30,draw=none] (0,0) rectangle (1.6,1.2);	

\foreach \x in {1,2,3,4,5}
{
	\draw[fill=gray!30,draw=none] (\x*0.15*7,\x*0.15*5.5) rectangle (\x*0.15*7+1.6,\x*0.15*5.5+1.2);	
	\node at (\x*0.15*7+0.55,\x*0.15*5.5+1.4) {$y_\x$};
}	
\foreach \x in {0,1,2,3,4,5}
{
	\draw[fill=none] (\x*0.15*7,\x*0.15*5.5) rectangle (\x*0.15*7+1.6,\x*0.15*5.5+1.2);	
}
\node at (0.53,1.4){$y_0=p$};
\draw (0,0) to (7,0);
\draw (0,0) to (0,5.8);

\tikzstyle{every path}=[draw=red,line width = 1pt]
\draw (0,0.01) to  (1*7,1*5.5);
\node at (7.3,5.4) {\color{red}$\ell \mathbf{q}$};

\tikzstyle{every path}=[draw=black,line width = 1pt, ->]
\draw[<->] (0,-0.2)  to  node [below] {$s_1$}  (1.6,-0.2);
\draw[<->] (1.6+0.2,0)  to  node [right] {$s_2$}  (1.6+0.2,1.2);
\end{tikzpicture}
\end{tabular}
}
\caption{The word $\dir{w}{q}{s}$ is built from the blocks of size $\mathbf{s}$ occurring at positions $\ell\mathbf{q}$ in $w$. Those blocks in $A^{[\![\mathbf{0},\mathbf{s}-\mathbf{1}]\!]}$ may or may not overlap.}
\label{fig:def-wqs}
\end{figure}

Note that, for any choice of direction $\mathbf{q}$, the first letter $\dir{w}{q}{s}(0)$ of the unidimensional infinite word $\dir{w}{q}{s}$ is the prefix of size $\mathbf{s}$ of the $d$-dimensional infinite word $w$.

\begin{definition}[URD]
\label{def:URD}
A $d$-dimensional infinite word $w\colon\N^d\to A$ is \emph{uniformly recurrent along all directions} (URD for short) if for all $\mathbf{s}\in\N^d$ and all directions $\mathbf{q}\in\N^d$, there exists $b\in\N$ such that each length-$b$ factor of $\dir{w}{q}{s}$ contains the letter $\dir{w}{q}{s}(0)$.
\end{definition}

Alternatively, we can say that the letter $\dir{w}{q}{s}(0)$ occurs infinitely often in $\dir{w}{q}{s}$ with gaps at most $b$. The same reformulation is also valid for further definitions of uniform recurrence.

\begin{proposition}
A $d$-dimensional infinite word $w\colon\N^d\to A$ is URD if and only if for all $\mathbf{s}\in\N^d$ and all directions $\mathbf{q}\in\N^d$, the unidimensional word $\dir{w}{q}{s}$ is uniformly recurrent. 
\end{proposition}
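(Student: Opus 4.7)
The reverse implication is immediate from the definitions: if $\dir{w}{q}{s}$ is uniformly recurrent (as a unidimensional word), then in particular its prefix of length $1$, namely the letter $\dir{w}{q}{s}(0)$, occurs with bounded gaps, which is exactly the URD condition for the pair $(\mathbf{s},\mathbf{q})$.

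For the forward implication, fix $\mathbf{s}\in\N^d$ and a direction $\mathbf{q}\in\N^d$. Recall that a unidimensional infinite word is uniformly recurrent if and only if each of its prefixes occurs with bounded gaps (any factor is contained in a sufficiently long prefix, and bounded gaps for the longer prefix transfer to the factor). So it suffices to prove that, for each $k\ge 1$, the length-$k$ prefix of $\dir{w}{q}{s}$ occurs in $\dir{w}{q}{s}$ with bounded gaps.

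The plan is to reduce this to an instance of URD at a larger size. Set $\mathbf{s}':=\mathbf{s}+(k-1)\mathbf{q}$, which still lies in $\N^d$, and consider the word $\dir{w}{q}{s'}$ along the same direction $\mathbf{q}$. The key observation is the following translation: for every $\ell\in\N$, the letter $\dir{w}{q}{s'}(\ell)$ is the rectangular block of size $\mathbf{s}'$ at position $\ell\mathbf{q}$ in $w$, and by construction this block contains, as sub-blocks of size $\mathbf{s}$ positioned at offsets $j\mathbf{q}$ for $j=0,\dots,k-1$, exactly the letters $\dir{w}{q}{s}(\ell),\dir{w}{q}{s}(\ell+1),\dots,\dir{w}{q}{s}(\ell+k-1)$. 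Consequently, whenever $\dir{w}{q}{s'}(\ell)=\dir{w}{q}{s'}(0)$, the first $k$ letters of $\dir{w}{q}{s}$ reappear starting at position $\ell$ in $\dir{w}{q}{s}$.

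Applying URD to the pair $(\mathbf{s}',\mathbf{q})$ provides a bound $b$ such that $\dir{w}{q}{s'}(0)$ occurs in $\dir{w}{q}{s'}$ with gaps at most $b$. By the previous observation, the length-$k$ prefix of $\dir{w}{q}{s}$ also occurs in $\dir{w}{q}{s}$ with gaps at most $b$. This holds for every $k$, hence every prefix of $\dir{w}{q}{s}$ occurs with bounded gaps, and $\dir{w}{q}{s}$ is uniformly recurrent. The only non-routine step is the translation between occurrences of the size-$\mathbf{s}'$ prefix along $\mathbf{q}$ and occurrences of the length-$k$ prefix of $\dir{w}{q}{s}$, which relies on the fact that the aggregated block of size $\mathbf{s}+(k-1)\mathbf{q}$ exactly packages $k$ consecutive $\mathbf{s}$-blocks along $\mathbf{q}$.
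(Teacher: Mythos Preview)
Your proof is correct and follows essentially the same approach as the paper: reduce the recurrence of the length-$k$ prefix of $\dir{w}{q}{s}$ to the URD condition applied to the enlarged size $\mathbf{s}'=\mathbf{s}+(k-1)\mathbf{q}$, using that the block of size $\mathbf{s}'$ at position $\ell\mathbf{q}$ packages the $k$ consecutive $\mathbf{s}$-blocks $\dir{w}{q}{s}(\ell),\ldots,\dir{w}{q}{s}(\ell+k-1)$. The paper phrases the conclusion as ``each length-$(b'+k-1)$ factor of $\dir{w}{q}{s}$ contains the prefix'' rather than ``gaps at most $b$'', but this is only a cosmetic difference in how the bound is stated.
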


\begin{proof}
The condition is clearly sufficient. Let us show that it is also necessary. Suppose that $w\colon\N^d\to A$ is URD and let $\mathbf{s},\mathbf{q}\in\N^d$ be some fixed size and direction. We show that any prefix of $\dir{w}{q}{s}$ appears infinitely often with bounded gaps in $\dir{w}{q}{s}$. Consider a prefix $p$ of $\dir{w}{q}{s}$ of some length $\ell$. Let $\mathbf{s}'=(\ell-1)\mathbf{q}+\mathbf{s}$. Since $w$ is URD, there exists $b'\in\N$ such that each length-$b'$ factor of $\dir{w}{q}{s'}$ contains the letter $\dir{w}{q}{s'}(0)$. This implies that each length-$(b'+\ell-1)$ factor of $\dir{w}{q}{s}$ contains $p$. 
\end{proof}

We will see that the uniform recurrence along all directions implies that rows and columns are uniformly recurrent (see Proposition~\ref{prop:URD_rows_columns}). However, a URD word is not necessarily UR as shown by the following proposition. In the next section, we will show that UR does not imply URD either (see Corollary~\ref{cor:UR_not_imply_URD}). 

\begin{proposition}\label{prop:URD_isnot_UR}
For any $d\ge 2$, there exists a $d$-dimensional URD word that is not UR.
\end{proposition}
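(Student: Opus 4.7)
The strategy is to construct, in each dimension $d\ge 2$, an explicit word whose radial structure along every primitive direction from the origin is uniformly recurrent, but which globally admits arbitrarily large rectangular blocks missing some fixed prefix. The natural first candidate is the construction of Section~\ref{sec:gcd}: pick a non-periodic uniformly recurrent unidimensional word $u$ and set $w(\mathbf{i})=u(\gcd(i_1,\ldots,i_d))$, using the convention $\gcd(\mathbf{0})=0$. URD would follow from the identity $w(\ell\mathbf{q})=u(\ell)$ valid for every primitive direction $\mathbf{q}$, together with the observation that the block $\dir{w}{q}{s}(\ell)$ depends on $\ell$ only through a bounded window of $u$ and a periodic coprimality pattern determined by $\mathbf{q}$ and $\mathbf{s}$; uniform recurrence of $u$ then transfers to $\dir{w}{q}{s}$.

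To establish non-UR, I would analyze the occurrences of a carefully chosen prefix $p$ of size $\mathbf{s}$: $p$ occurs at $\mathbf{p}$ precisely when $u(\gcd(\mathbf{p}+\mathbf{i}))=u(\gcd(\mathbf{i}))$ for every $\mathbf{i}\in[\![\mathbf{0},\mathbf{s}-\mathbf{1}]\!]$. By choosing $u$ so that a designated value $u(k_0)$ appears in $u$ only at positions forming an arithmetically rigid set, one should be able to rule out solutions to this system inside arbitrarily large cubes. Should the $\gcd$-construction turn out to be UR for every choice of $u$ (which is quite plausible given the rigidity of the gcd function), the fallback is a Toeplitz-like inductive construction: start from any URD word and, at each stage $k$, modify the word inside a cube of side $2^k$ far from the origin so as to destroy every occurrence of $p$ there, while re-correcting the finitely many newly affected rays emanating from the origin to preserve uniform recurrence along each of them. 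The limit word is then URD by construction, and it fails UR because arbitrarily large prefix-free cubes are built in at every scale.

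The main obstacle is the non-UR verification: URD is essentially automatic from either construction, so the real work is to show that genuine arbitrarily large prefix-free cubes exist — either through a combinatorial analysis of the gcd-system, or through a careful inductive bookkeeping that both destroys the prefix and re-corrects rays without reintroducing it. Once the bidimensional case $d=2$ is settled, the passage to $d\ge 3$ is immediate: both the $\gcd$-construction and the Toeplitz-style construction are symmetric under permutations of the $d$ coordinates, and the verifications of URD and non-UR go through unchanged.
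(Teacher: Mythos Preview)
Your fallback construction is close in spirit to the paper's, but the ``re-correction'' step hides the real difficulty and, as stated, does not go through. When you overwrite a cube of side $2^k$ by a constant block, you do affect only finitely many rays for each fixed size $\mathbf{s}$; however, you then perform infinitely many such overwrites. Either the re-correction is passive (you simply accept the enlarged gap in each affected $\dir{w}{q}{s}$), in which case you must guarantee that no fixed pair $(\mathbf{q},\mathbf{s})$ is hit by cubes of unbounded size infinitely often --- a non-trivial placement constraint you do not address; or the re-correction is active (you reinsert copies of the prefix along the damaged rays), in which case you overwrite already-determined cells, possibly inside a previously built constant cube or inside blocks relevant to other directions, and the modifications cascade. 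Starting from an \emph{arbitrary} URD word there is no slack to absorb this cascade, so ``URD by construction'' is not justified for the limit.

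The paper's proof avoids this by never overwriting anything: it builds $w$ from scratch on an initially empty grid. At step $n$ it completes the prefix $p_n$ of size $(n,\ldots,n)$, then for each direction $\mathbf{q}<\mathbf{n}$ chooses a period $b_{\mathbf{q}}$ large enough that the positions $\ell b_{\mathbf{q}}\mathbf{q}$ are still empty, and plants $p_n$ there; finally it writes an $n^d$ block of $0$'s in a still-empty region. Because every write goes into virgin territory, each direction gets its URD bound fixed once and for all, the $0$-blocks are never disturbed, and there is no cascade to control. Your first candidate (the $\gcd$-word) is, as you yourself suspect, unlikely to work: the paper uses that construction precisely as a source of URD examples and gives no hint that it ever fails UR.
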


\begin{proof} 
We give a sketch of a construction to avoid cumbersome details.
Let $A$ be a finite alphabet containing at least two letters, say $0$ and $1$, and let $d\ge2$.
Consider the following recursive procedure to construct uncountably many such $d$-dimensional infinite words. See Figure~\ref{fig:URD_isnot_UR}
\begin{figure}[htb]
    \centering
    		\begin{tikzpicture}[every node/.style={scale=0.7}]
		
		\tikzset{square matrix/.style={
				matrix of nodes,
				column sep=-\pgflinewidth, row sep=-\pgflinewidth,
				nodes={
					minimum height=#1,
					anchor=center,
					text width=#1,
					align=center,
					inner sep=0pt
				},
			},
			square matrix/.default=0.5cm
		}
		
		\matrix[square matrix]
		{
|[fill=col1]| 0 & |[fill=col1]| 0 & |[fill=col2]| 0 & |[fill=col3]| 0 & |[fill=col4]| 0 & ? & ? & ? & ? & ? & ? & ? & ? & ? & |[fill=col2]| 0 & |[fill=col2]| 0 & |[fill=col2]| 0 & |[fill=col3]| 0 & ? & ? & ? & ? & ? & ? & ? & ? & ? & ? & |[fill=col1]| 0 & |[fill=col1]| 0 \\
|[fill=col1]| 1 & |[fill=col1]| 0 & |[fill=col2]| 1 & |[fill=col3]| 0 & |[fill=col4]| 1 & ? & ? & ? & ? & ? & ? & ? & ? & ? & |[fill=col2]| 1 & |[fill=col2]| 0 & |[fill=col2]| 1 & |[fill=col3]| 0 & |[fill=col4]| 1 & |[fill=col4]| 0 & |[fill=col4]| 1 & ? & ? & ? & |[fill=col4]| 1 & |[fill=col4]| 0 & |[fill=col4]| 1 & |[fill=col4]| 0 & |[fill=col1]| 1 & |[fill=col1]| 0 \\
|[fill=col1]| 0 & |[fill=col1]| 0 & |[fill=col2]| 0 & |[fill=col3]| 0 & ? & ? & ? & ? & |[fill=col3]| 0 & |[fill=col3]| 0 & |[fill=col3]| 0 & |[fill=col3]| 0 & |[fill=col3]| 0 & |[fill=col3]| 0 & |[fill=col3]| 0 & |[fill=col3]| 0 & |[fill=col3]| 0 & |[fill=col3]| 0 & |[fill=col3]| 0 & |[fill=col3]| 0 & |[fill=col4]| 0 & ? & ? & ? & |[fill=col3]| 0 & |[fill=col3]| 0 & |[fill=col1]| 0 & |[fill=col1]| 0 & |[fill=col4]| 0 & ? \\
|[fill=col1]| 1 & |[fill=col1]| 0 & |[fill=col2]| 1 & |[fill=col3]| 0 & ? & ? & ? & ? & |[fill=col3]| 1 & |[fill=col3]| 0 & |[fill=col3]| 1 & |[fill=col3]| 0 & |[fill=col2]| 1 & |[fill=col2]| 0 & |[fill=col2]| 1 & |[fill=col3]| 0 & |[fill=col3]| 1 & |[fill=col3]| 0 & |[fill=col3]| 1 & |[fill=col3]| 0 & |[fill=col4]| 0 & ? & ? & ? & |[fill=col2]| 1 & |[fill=col2]| 0 & |[fill=col1]| 1 & |[fill=col1]| 0 & |[fill=col4]| 0 & ? \\
|[fill=col1]| 0 & |[fill=col1]| 0 & |[fill=col2]| 0 & |[fill=col3]| 0 & ? & ? & ? & ? & |[fill=col3]| 0 & |[fill=col3]| 0 & |[fill=col3]| 0 & |[fill=col3]| 0 & |[fill=col2]| 0 & |[fill=col2]| 0 & |[fill=col2]| 0 & |[fill=col3]| 0 & |[fill=col3]| 0 & |[fill=col3]| 0 & |[fill=col3]| 0 & |[fill=col3]| 0 & |[fill=col4]| 0 & ? & ? & ? & |[fill=col1]| 0 & |[fill=col1]| 0 & |[fill=col2]| 0 & |[fill=col3]| 0 & |[fill=col4]| 0 & ? \\
|[fill=col1]| 1 & |[fill=col1]| 0 & |[fill=col2]| 1 & |[fill=col3]| 0 & ? & ? & ? & ? & |[fill=col3]| 1 & |[fill=col3]| 0 & |[fill=col3]| 1 & |[fill=col3]| 0 & |[fill=col2]| 1 & |[fill=col2]| 0 & |[fill=col2]| 1 & |[fill=col3]| 0 & |[fill=col3]| 1 & |[fill=col3]| 0 & |[fill=col3]| 1 & |[fill=col3]| 0 & |[fill=col4]| 1 & ? & ? & ? & |[fill=col1]| 1 & |[fill=col1]| 0 & |[fill=col2]| 1 & |[fill=col3]| 0 & |[fill=col4]| 1 & ? \\
|[fill=col1]| 0 & |[fill=col1]| 0 & |[fill=col2]| 0 & |[fill=col3]| 0 & ? & ? & ? & ? & ? & ? & |[fill=col3]| 0 & |[fill=col3]| 0 & |[fill=col3]| 0 & |[fill=col3]| 0 & |[fill=col4]| 0 & ? & ? & ? & ? & ? & |[fill=col3]| 0 & |[fill=col3]| 0 & |[fill=col1]| 0 & |[fill=col1]| 0 & ? & ? & ? & ? & ? & ? \\
|[fill=col1]| 1 & |[fill=col1]| 0 & |[fill=col2]| 1 & |[fill=col3]| 0 & ? & ? & ? & ? & ? & ? & |[fill=col2]| 1 & |[fill=col2]| 0 & |[fill=col2]| 1 & |[fill=col3]| 0 & |[fill=col4]| 0 & ? & ? & ? & ? & ? & |[fill=col2]| 1 & |[fill=col2]| 0 & |[fill=col1]| 1 & |[fill=col1]| 0 & |[fill=col4]| 1 & |[fill=col4]| 0 & |[fill=col4]| 1 & |[fill=col4]| 0 & |[fill=col4]| 1 & ? \\
|[fill=col1]| 0 & |[fill=col1]| 0 & |[fill=col2]| 0 & |[fill=col3]| 0 & ? & ? & |[fill=col3]| 0 & |[fill=col3]| 0 & |[fill=col3]| 0 & |[fill=col3]| 0 & |[fill=col2]| 0 & |[fill=col2]| 0 & |[fill=col2]| 0 & |[fill=col3]| 0 & |[fill=col3]| 0 & |[fill=col3]| 0 & ? & ? & ? & ? & |[fill=col1]| 0 & |[fill=col1]| 0 & |[fill=col2]| 0 & |[fill=col3]| 0 & |[fill=col4]| 0 & |[fill=col4]| 0 & |[fill=col4]| 0 & |[fill=col4]| 0 & |[fill=col4]| 0 & ? \\
|[fill=col1]| 1 & |[fill=col1]| 0 & |[fill=col2]| 1 & |[fill=col3]| 0 & ? & ? & |[fill=col3]| 1 & |[fill=col3]| 0 & |[fill=col3]| 1 & |[fill=col3]| 0 & |[fill=col2]| 1 & |[fill=col2]| 0 & |[fill=col2]| 1 & |[fill=col3]| 0 & |[fill=col3]| 1 & |[fill=col3]| 0 & ? & ? & ? & ? & |[fill=col1]| 1 & |[fill=col1]| 0 & |[fill=col2]| 1 & |[fill=col3]| 0 & |[fill=col4]| 1 & |[fill=col4]| 0 & |[fill=col4]| 1 & |[fill=col4]| 0 & |[fill=col4]| 0 & ? \\
|[fill=col1]| 0 & |[fill=col1]| 0 & |[fill=col2]| 0 & |[fill=col3]| 0 & ? & ? & |[fill=col3]| 0 & |[fill=col3]| 0 & |[fill=col3]| 0 & |[fill=col3]| 0 & |[fill=col3]| 0 & |[fill=col3]| 0 & |[fill=col3]| 0 & |[fill=col3]| 0 & |[fill=col3]| 0 & |[fill=col3]| 0 & |[fill=col3]| 0 & |[fill=col3]| 0 & |[fill=col1]| 0 & |[fill=col1]| 0 & ? & ? & ? & ? & |[fill=col3]| 0 & |[fill=col3]| 0 & |[fill=col3]| 0 & |[fill=col3]| 0 & |[fill=col4]| 0 & ? \\
|[fill=col1]| 1 & |[fill=col1]| 0 & |[fill=col2]| 1 & |[fill=col3]| 0 & |[fill=col4]| 1 & ? & |[fill=col3]| 1 & |[fill=col3]| 0 & |[fill=col2]| 1 & |[fill=col2]| 0 & |[fill=col2]| 1 & |[fill=col3]| 0 & |[fill=col3]| 1 & |[fill=col3]| 0 & |[fill=col3]| 1 & |[fill=col3]| 0 & |[fill=col2]| 1 & |[fill=col2]| 0 & |[fill=col1]| 1 & |[fill=col1]| 0 & ? & ? & ? & ? & |[fill=col3]| 1 & |[fill=col3]| 0 & |[fill=col3]| 1 & |[fill=col3]| 0 & |[fill=col4]| 1 & ? \\
|[fill=col1]| 0 & |[fill=col1]| 0 & |[fill=col2]| 0 & |[fill=col3]| 0 & |[fill=col4]| 0 & ? & ? & ? & |[fill=col2]| 0 & |[fill=col2]| 0 & |[fill=col2]| 0 & |[fill=col3]| 0 & ? & ? & ? & ? & |[fill=col1]| 0 & |[fill=col1]| 0 & |[fill=col2]| 0 & |[fill=col3]| 0 & ? & ? & ? & ? & |[fill=col3]| 0 & |[fill=col3]| 0 & |[fill=col3]| 0 & |[fill=col3]| 0 & ? & ? \\
|[fill=col1]| 1 & |[fill=col1]| 0 & |[fill=col2]| 1 & |[fill=col3]| 0 & |[fill=col4]| 0 & ? & ? & ? & |[fill=col2]| 1 & |[fill=col2]| 0 & |[fill=col2]| 1 & |[fill=col3]| 0 & |[fill=col4]| 1 & |[fill=col4]| 0 & |[fill=col4]| 1 & |[fill=col4]| 0 & |[fill=col1]| 1 & |[fill=col1]| 0 & |[fill=col2]| 1 & |[fill=col3]| 0 & |[fill=col4]| 1 & |[fill=col4]| 0 & |[fill=col4]| 1 & ? & |[fill=col3]| 1 & |[fill=col3]| 0 & |[fill=col3]| 1 & |[fill=col3]| 0 & |[fill=col4]| 1 & ? \\
|[fill=col1]| 0 & |[fill=col1]| 0 & |[fill=col2]| 0 & |[fill=col3]| 0 & |[fill=col3]| 0 & |[fill=col3]| 0 & |[fill=col3]| 0 & |[fill=col3]| 0 & |[fill=col3]| 0 & |[fill=col3]| 0 & |[fill=col3]| 0 & |[fill=col3]| 0 & |[fill=col3]| 0 & |[fill=col3]| 0 & |[fill=col1]| 0 & |[fill=col1]| 0 & |[fill=col4]| 0 & ? & |[fill=col3]| 0 & |[fill=col3]| 0 & |[fill=col3]| 0 & |[fill=col3]| 0 & |[fill=col4]| 0 & ? & |[fill=col3]| 0 & |[fill=col3]| 0 & |[fill=col3]| 0 & |[fill=col3]| 0 & |[fill=col4]| 0 & ? \\
|[fill=col1]| 1 & |[fill=col1]| 0 & |[fill=col2]| 1 & |[fill=col3]| 0 & |[fill=col3]| 1 & |[fill=col3]| 0 & |[fill=col2]| 1 & |[fill=col2]| 0 & |[fill=col2]| 1 & |[fill=col3]| 0 & |[fill=col3]| 1 & |[fill=col3]| 0 & |[fill=col2]| 1 & |[fill=col2]| 0 & |[fill=col1]| 1 & |[fill=col1]| 0 & |[fill=col4]| 0 & ? & |[fill=col3]| 1 & |[fill=col3]| 0 & |[fill=col3]| 1 & |[fill=col3]| 0 & |[fill=col4]| 0 & ? & |[fill=col2]| 1 & |[fill=col2]| 0 & |[fill=col2]| 1 & |[fill=col3]| 0 & |[fill=col4]| 0 & ? \\
|[fill=col1]| 0 & |[fill=col1]| 0 & |[fill=col2]| 0 & |[fill=col3]| 0 & |[fill=col3]| 0 & |[fill=col3]| 0 & |[fill=col2]| 0 & |[fill=col2]| 0 & |[fill=col2]| 0 & |[fill=col3]| 0 & |[fill=col3]| 0 & |[fill=col3]| 0 & |[fill=col1]| 0 & |[fill=col1]| 0 & |[fill=col2]| 0 & |[fill=col3]| 0 & |[fill=col4]| 0 & ? & |[fill=col3]| 0 & |[fill=col3]| 0 & |[fill=col3]| 0 & |[fill=col3]| 0 & |[fill=col4]| 0 & ? & |[fill=col2]| 0 & |[fill=col2]| 0 & |[fill=col2]| 0 & |[fill=col3]| 0 & |[fill=col4]| 0 & ? \\
|[fill=col1]| 1 & |[fill=col1]| 0 & |[fill=col2]| 1 & |[fill=col3]| 0 & |[fill=col3]| 1 & |[fill=col3]| 0 & |[fill=col2]| 1 & |[fill=col2]| 0 & |[fill=col2]| 1 & |[fill=col3]| 0 & |[fill=col3]| 1 & |[fill=col3]| 0 & |[fill=col1]| 1 & |[fill=col1]| 0 & |[fill=col2]| 1 & |[fill=col3]| 0 & |[fill=col4]| 1 & ? & |[fill=col3]| 1 & |[fill=col3]| 0 & |[fill=col3]| 1 & |[fill=col3]| 0 & |[fill=col4]| 1 & ? & |[fill=col2]| 1 & |[fill=col2]| 0 & |[fill=col2]| 1 & |[fill=col3]| 0 & |[fill=col4]| 1 & ? \\
|[fill=col1]| 0 & |[fill=col1]| 0 & |[fill=col2]| 0 & |[fill=col3]| 0 & |[fill=col3]| 0 & |[fill=col3]| 0 & |[fill=col3]| 0 & |[fill=col3]| 0 & |[fill=col3]| 0 & |[fill=col3]| 0 & |[fill=col1]| 0 & |[fill=col1]| 0 & |[fill=col3]| 0 & |[fill=col3]| 0 & |[fill=col3]| 0 & |[fill=col3]| 0 & |[fill=col3]| 0 & |[fill=col3]| 0 & |[fill=col3]| 0 & |[fill=col3]| 0 & ? & ? & ? & ? & ? & ? & ? & ? & ? & ? \\
|[fill=col1]| 1 & |[fill=col1]| 0 & |[fill=col2]| 1 & |[fill=col3]| 0 & |[fill=col2]| 1 & |[fill=col2]| 0 & |[fill=col2]| 1 & |[fill=col3]| 0 & |[fill=col2]| 1 & |[fill=col2]| 0 & |[fill=col1]| 1 & |[fill=col1]| 0 & |[fill=col3]| 1 & |[fill=col3]| 0 & |[fill=col3]| 1 & |[fill=col3]| 0 & |[fill=col2]| 1 & |[fill=col2]| 0 & |[fill=col2]| 1 & |[fill=col3]| 0 & |[fill=col4]| 1 & |[fill=col4]| 0 & |[fill=col4]| 1 & ? & |[fill=col4]| 1 & |[fill=col4]| 0 & |[fill=col4]| 1 & |[fill=col4]| 0 & |[fill=col4]| 1 & ? \\
|[fill=col1]| 0 & |[fill=col1]| 0 & |[fill=col2]| 0 & |[fill=col3]| 0 & |[fill=col2]| 0 & |[fill=col2]| 0 & |[fill=col2]| 0 & |[fill=col3]| 0 & |[fill=col1]| 0 & |[fill=col1]| 0 & |[fill=col2]| 0 & |[fill=col3]| 0 & |[fill=col3]| 0 & |[fill=col3]| 0 & |[fill=col3]| 0 & |[fill=col3]| 0 & |[fill=col2]| 0 & |[fill=col2]| 0 & |[fill=col2]| 0 & |[fill=col3]| 0 & |[fill=col3]| 0 & |[fill=col3]| 0 & |[fill=col4]| 0 & ? & |[fill=col4]| 0 & |[fill=col4]| 0 & |[fill=col4]| 0 & |[fill=col4]| 0 & |[fill=col4]| 0 & ? \\
|[fill=col1]| 1 & |[fill=col1]| 0 & |[fill=col2]| 1 & |[fill=col3]| 0 & |[fill=col2]| 1 & |[fill=col2]| 0 & |[fill=col2]| 1 & |[fill=col3]| 0 & |[fill=col1]| 1 & |[fill=col1]| 0 & |[fill=col2]| 1 & |[fill=col3]| 0 & |[fill=col3]| 1 & |[fill=col3]| 0 & |[fill=col3]| 1 & |[fill=col3]| 0 & |[fill=col2]| 1 & |[fill=col2]| 0 & |[fill=col2]| 1 & |[fill=col3]| 0 & |[fill=col3]| 1 & |[fill=col3]| 0 & |[fill=col4]| 0 & ? & |[fill=col4]| 1 & |[fill=col4]| 0 & |[fill=col4]| 1 & |[fill=col4]| 0 & |[fill=col4]| 0 & |[fill=col4]| 0 \\
|[fill=col1]| 0 & |[fill=col1]| 0 & |[fill=col2]| 0 & |[fill=col3]| 0 & |[fill=col3]| 0 & |[fill=col3]| 0 & |[fill=col1]| 0 & |[fill=col1]| 0 & |[fill=col3]| 0 & |[fill=col3]| 0 & |[fill=col3]| 0 & |[fill=col3]| 0 & ? & ? & |[fill=col3]| 0 & |[fill=col3]| 0 & |[fill=col3]| 0 & |[fill=col3]| 0 & |[fill=col3]| 0 & |[fill=col3]| 0 & |[fill=col3]| 0 & |[fill=col3]| 0 & |[fill=col4]| 0 & ? & |[fill=col4]| 0 & |[fill=col4]| 0 & |[fill=col4]| 0 & |[fill=col4]| 0 & |[fill=col4]| 0 & |[fill=col4]| 0 \\
|[fill=col1]| 1 & |[fill=col1]| 0 & |[fill=col2]| 1 & |[fill=col2]| 0 & |[fill=col2]| 1 & |[fill=col2]| 0 & |[fill=col1]| 1 & |[fill=col1]| 0 & |[fill=col2]| 1 & |[fill=col2]| 0 & |[fill=col2]| 1 & |[fill=col3]| 0 & ? & ? & |[fill=col3]| 0 & |[fill=col3]| 0 & |[fill=col3]| 0 & |[fill=col3]| 0 & |[fill=col3]| 1 & |[fill=col3]| 0 & |[fill=col3]| 1 & |[fill=col3]| 0 & |[fill=col4]| 1 & ? & |[fill=col4]| 1 & |[fill=col4]| 0 & |[fill=col4]| 1 & |[fill=col4]| 0 & |[fill=col4]| 1 & |[fill=col4]| 0 \\
|[fill=col1]| 0 & |[fill=col1]| 0 & |[fill=col2]| 0 & |[fill=col2]| 0 & |[fill=col1]| 0 & |[fill=col1]| 0 & |[fill=col2]| 0 & |[fill=col3]| 0 & |[fill=col2]| 0 & |[fill=col2]| 0 & |[fill=col2]| 0 & |[fill=col2]| 0 & |[fill=col2]| 0 & |[fill=col2]| 0 & |[fill=col3]| 0 & |[fill=col3]| 0 & |[fill=col3]| 0 & |[fill=col3]| 0 & ? & ? & ? & ? & ? & ? & ? & ? & ? & ? & ? & |[fill=col4]| 0 \\
|[fill=col1]| 1 & |[fill=col1]| 0 & |[fill=col2]| 1 & |[fill=col2]| 0 & |[fill=col1]| 1 & |[fill=col1]| 0 & |[fill=col2]| 1 & |[fill=col3]| 0 & |[fill=col2]| 1 & |[fill=col2]| 0 & |[fill=col2]| 1 & |[fill=col2]| 0 & |[fill=col2]| 0 & |[fill=col2]| 0 & |[fill=col3]| 0 & |[fill=col3]| 0 & |[fill=col3]| 0 & |[fill=col3]| 0 & |[fill=col4]| 1 & |[fill=col4]| 0 & |[fill=col4]| 1 & |[fill=col4]| 0 & |[fill=col4]| 1 & ? & ? & ? & ? & ? & ? & |[fill=col4]| 0 \\
|[fill=col1]| 0 & |[fill=col1]| 0 & |[fill=col1]| 0 & |[fill=col1]| 0 & |[fill=col1]| 0 & |[fill=col1]| 0 & |[fill=col3]| 0 & |[fill=col3]| 0 & |[fill=col3]| 0 & |[fill=col3]| 0 & ? & |[fill=col2]| 0 & |[fill=col2]| 0 & |[fill=col2]| 0 & |[fill=col3]| 0 & |[fill=col3]| 0 & ? & ? & |[fill=col3]| 0 & |[fill=col3]| 0 & |[fill=col3]| 0 & |[fill=col3]| 0 & |[fill=col4]| 0 & ? & |[fill=col3]| 0 & |[fill=col3]| 0 & |[fill=col3]| 0 & |[fill=col3]| 0 & ? & ? \\
|[fill=col1]| 1 & |[fill=col1]| 0 & |[fill=col1]| 1 & |[fill=col1]| 0 & |[fill=col1]| 0 & |[fill=col1]| 0 & |[fill=col2]| 1 & |[fill=col2]| 0 & |[fill=col2]| 1 & |[fill=col3]| 0 & ? & ? & |[fill=col2]| 1 & |[fill=col2]| 0 & |[fill=col2]| 1 & |[fill=col3]| 0 & ? & ? & |[fill=col2]| 1 & |[fill=col2]| 0 & |[fill=col2]| 1 & |[fill=col3]| 0 & |[fill=col4]| 0 & ? & |[fill=col2]| 1 & |[fill=col2]| 0 & |[fill=col2]| 1 & |[fill=col3]| 0 & ? & ? \\
|[fill=col1]| 0 & |[fill=col1]| 0 & |[fill=col1]| 0 & |[fill=col1]| 0 & |[fill=col1]| 0 & |[fill=col1]| 0 & |[fill=col1]| 0 & |[fill=col1]| 0 & |[fill=col1]| 0 & |[fill=col1]| 0 & |[fill=col1]| 0 & |[fill=col1]| 0 & |[fill=col1]| 0 & |[fill=col1]| 0 & |[fill=col1]| 0 & |[fill=col1]| 0 & |[fill=col1]| 0 & |[fill=col1]| 0 & |[fill=col1]| 0 & |[fill=col1]| 0 & |[fill=col1]| 0 & |[fill=col1]| 0 & |[fill=col1]| 0 & |[fill=col1]| 0 & |[fill=col1]| 0 & |[fill=col1]| 0 & |[fill=col1]| 0 & |[fill=col1]| 0 & |[fill=col1]| 0 & |[fill=col1]| 0 \\
|[fill=col0]| 1 & |[fill=col1]| 0 & |[fill=col1]| 1 & |[fill=col1]| 0 & |[fill=col1]| 1 & |[fill=col1]| 0 & |[fill=col1]| 1 & |[fill=col1]| 0 & |[fill=col1]| 1 & |[fill=col1]| 0 & |[fill=col1]| 1 & |[fill=col1]| 0 & |[fill=col1]| 1 & |[fill=col1]| 0 & |[fill=col1]| 1 & |[fill=col1]| 0 & |[fill=col1]| 1 & |[fill=col1]| 0 & |[fill=col1]| 1 & |[fill=col1]| 0 & |[fill=col1]| 1 & |[fill=col1]| 0 & |[fill=col1]| 1 & |[fill=col1]| 0 & |[fill=col1]| 1 & |[fill=col1]| 0 & |[fill=col1]| 1 & |[fill=col1]| 0 & |[fill=col1]| 1 & |[fill=col1]| 0 \\
		};
\end{tikzpicture}
    \caption{The first 5 steps of the construction of a URD word that is not UR, according to the procedure described in Proposition~\ref{prop:URD_isnot_UR}. The letters filled at steps $1,\ldots,5$ are respectively drawn on cells colored in blue, gray, pink, yellow and red.
    }
    \label{fig:URD_isnot_UR}
\end{figure}
for an illustration of a bidimensional binary such word. On the first step, fill the position $\mathbf{0}$ with the letter $1$. On each step $n\ge 2$, consider the prefix $p_n$ of size $\mathbf{n}=(n,\ldots,n)$ which is partially filled. Choose arbitrary letters of $A$ to complete it (in Figure~\ref{fig:URD_isnot_UR}, we chose to complete with $0$'s at each step). For each direction $\mathbf{q}<\mathbf{n}$, choose a constant $b_\mathbf{q}$ and copy $p_n$ in all positions $\ell b_\mathbf{q} \mathbf{q}$ with $\ell \in \N$. Note that the word $\dir{w}{q}{n}$ may be already partially filled, but there always exists a constant $b_\mathbf{q}$ (potentially big) that allows us to perform this procedure. In one of the remaining factors of size $\mathbf{n}$ that do not contain any letter yet, write $n^d$ times the letter $0$ (at each step of Figure~\ref{fig:URD_isnot_UR}, we chose such a square below the diagonal and the closest possible of the origin). All so-constructed words are URD but are not UR since they contain arbitrarily large hypercubes of $0$'s.
\end{proof}

A natural strengthening of the definition of URD words is to ask that the bound between consecutive occurrences of a prefix only depends on the size of the prefix and not on the chosen direction.

\begin{definition}[SURD]\label{def:SURD}
A $d$-dimensional infinite word $w\colon\N^d\to A$ is \emph{strongly uniformly recurrent along all directions} (SURD for short) if for each $\mathbf{s}\in\N^d$, there exists $b\in\N$ such that, for each direction $\mathbf{q} \in\N^d$, each length-$b$ factor of $\dir{w}{q}{s}$ contains the letter $\dir{w}{q}{s}(0)$.
\end{definition}

In Figure~\ref{fig:links_recurrence}, we summarize the relations between the different notions of recurrence we consider.
\begin{figure}[htb]
\centering
\begin{tikzcd}[arrows=Rightarrow, column sep=1.7cm, row sep=0.8cm, every arrow/.append style={shift left=0.8ex}]
	\text{\small SSURDO}
	\arrow[shorten <= 3pt,shorten >= 3pt]{dd}{\text{Def.}~\ref{def:SSURDO}}
	\arrow{r}{\text{Prop.}~\ref{prop:SSURDO_imply_UR}} 
	& 
    \text{\small UR}
	\arrow[negated,shorten <= 3pt,shorten >= 3pt]{dd} {\text{Cor.}~\ref{cor:UR_not_imply_URD}}
	\arrow[negated]{dr}{\text{Prop.}~\ref{prop:UR_rows_columns}}
    &
    \\	
    &
    &
    \text{\small UR rows}
	\arrow[negated,shorten >= 6pt]{dl}{\text{Prop.}~\ref{prop:URD_rows_columns}}
	\arrow[negated]{ul}{\text{Prop.}~\ref{prop:rows_columns_UR}}
	\\
	\text{\small SURD}=	\text{\small SURDO}
	\arrow[negated,shorten <= 3pt,shorten >= 3pt]{uu}{\text{Ex.}~\ref{ex:SURD_isnot_SSURDO}\ }
	\arrow{r}{\text{Def.}~\ref{def:SURD}}
	\arrow[draw=red,shorten <= 3pt,shorten >= 3pt]{uur}{\text{\color{red} ?}} 
	&
    \text{\small URD}=\text{\small URDO}
	\arrow[negated]{l}{\text{Prop.}~\ref{prop:URD_isnot_SURD}\ }
	\arrow[shorten <= 6pt]{ur}{\text{Prop.}~\ref{prop:URD_rows_columns}}
	\arrow[negated,shorten <= 3pt,shorten >= 3pt]{uu}{\text{Prop.}~\ref{prop:URD_isnot_UR}\ }
    &
\end{tikzcd}
\begin{tikzpicture}[overlay,remember picture]
\node at (-4.8,-1.7){{\text{\footnotesize{Prop.~\ref{prop:URDisURDO}}}}};
\node at (-9.5,-1.7){{\text{\footnotesize{Prop.~\ref{prop:URDisURDO}}}}};
\end{tikzpicture}
\caption{In black are drawn the links between the different notions of recurrence. In red is drawn an open question.}
\label{fig:links_recurrence}
\end{figure}

\section{Uniform recurrence along all directions from any origin}
\label{sec:origin}

As a natural generalization of $d$-dimensional URD and SURD infinite words, we could ask that the recurrence property should not just be taken into account on the lines $\{\ell\mathbf{q}\colon\ell\in\N\}$ for all directions $\mathbf{q}$ but on all lines $\{\ell\mathbf{q}+\mathbf{p}\colon\ell\in\N\}$ for all origins $\mathbf{p}$ and directions $\mathbf{q}$. In fact, this would not be a real generalization; the proof of this claim is the purpose of the present section.

\begin{definition}[URDO]
\label{def2}
A $d$-dimensional infinite word $w\colon\N^d\to A$ is \emph{uniformly recurrent along all directions from any origin} (URDO for short) if for each $\mathbf{p}\in\N^d$, the translated $d$-dimensional infinite word $w^{(\mathbf{p})}\colon \N^d\to A,\ \mathbf{i}\mapsto w(\mathbf{i}+\mathbf{p})$ is URD.
\end{definition}

\begin{definition}[SURDO]
A $d$-dimensional infinite word $w\colon\N^d\to A$ is \emph{strongly uniformly recurrent along all directions from any origin} (SURDO for short) if for each $\mathbf{p}\in\N^d$, the translated $d$-dimensional infinite word $w^{(\mathbf{p})}\colon \N^d\to A,\ \mathbf{i}\mapsto w(\mathbf{i}+\mathbf{p})$ is SURD.  
\end{definition}

\begin{proposition}\label{prop:URDisURDO}\ 
\begin{itemize}
\item A $d$-dimensional infinite word is URD if and only if it is URDO.
\item A $d$-dimensional infinite word is SURD if and only if it is SURDO.
\end{itemize}
\end{proposition}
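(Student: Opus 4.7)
The two ``if'' directions are immediate from the definitions: taking $\mathbf{p}=\mathbf{0}$ gives $w^{(\mathbf{0})}=w$, so URDO (resp.\ SURDO) trivially implies URD (resp.\ SURD). The content of the proposition is the two ``only if'' directions, which I will handle simultaneously because both rest on the same observation: the letter $\dir{w^{(\mathbf{p})}}{q}{s}(0)$ is simply the block of $w$ of size $\mathbf{s}$ read at position $\mathbf{p}$, and this block is contained in a strictly larger rectangular prefix of $w$ whose recurrence along the direction $\mathbf{q}$ is controlled by the URD (or SURD) hypothesis on $w$ itself.

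\textbf{Key step.} Fix an origin $\mathbf{p}\in\N^d$, a size $\mathbf{s}\in\N^d$ and a direction $\mathbf{q}\in\N^d$, and set $\mathbf{s}'=\mathbf{p}+\mathbf{s}$. Let $P$ denote the prefix of $w$ of size $\mathbf{s}'$, i.e.\ the letter $\dir{w}{q}{s'}(0)$. I claim that whenever $P$ occurs at position $\ell\mathbf{q}$ in $w$ (that is, whenever $\dir{w}{q}{s'}(\ell)=\dir{w}{q}{s'}(0)$), one has $\dir{w^{(\mathbf{p})}}{q}{s}(\ell)=\dir{w^{(\mathbf{p})}}{q}{s}(0)$. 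Indeed, for every $\mathbf{j}\in[\![\mathbf{0},\mathbf{s}-\mathbf{1}]\!]$ the index $\mathbf{p}+\mathbf{j}$ lies in $[\![\mathbf{0},\mathbf{s}'-\mathbf{1}]\!]$, so the assumed occurrence of $P$ yields $w(\ell\mathbf{q}+\mathbf{p}+\mathbf{j})=w(\mathbf{p}+\mathbf{j})$, which is exactly
\[
(\dir{w^{(\mathbf{p})}}{q}{s}(\ell))(\mathbf{j})=(\dir{w^{(\mathbf{p})}}{q}{s}(0))(\mathbf{j}).
\]

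\textbf{Conclusion for URD.} If $w$ is URD, apply the definition to the size $\mathbf{s}'$ and direction $\mathbf{q}$: there is $b\in\N$ such that every length-$b$ factor of $\dir{w}{q}{s'}$ contains the letter $\dir{w}{q}{s'}(0)$. By the key step, every length-$b$ factor of $\dir{w^{(\mathbf{p})}}{q}{s}$ then contains the letter $\dir{w^{(\mathbf{p})}}{q}{s}(0)$, so $w^{(\mathbf{p})}$ is URD. As $\mathbf{p}$ was arbitrary, $w$ is URDO.

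\textbf{Conclusion for SURD.} If $w$ is SURD, apply the definition to the size $\mathbf{s}'$: there is $b\in\N$ that works simultaneously for every direction $\mathbf{q}$. The key step transports this uniform bound to $\dir{w^{(\mathbf{p})}}{q}{s}$ for every $\mathbf{q}$, showing that the prefix of $w^{(\mathbf{p})}$ of size $\mathbf{s}$ has gaps at most $b$ along every direction. Hence $w^{(\mathbf{p})}$ is SURD, and $w$ is SURDO.

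\textbf{Main obstacle.} There is no real obstacle: the whole argument amounts to choosing a rectangular prefix of $w$ large enough to ``absorb'' the translation $\mathbf{p}$, after which the URD/SURD hypothesis on $w$ delivers the desired bounded-gap property for $w^{(\mathbf{p})}$. The only point to check carefully is that the enlarged size $\mathbf{s}'=\mathbf{p}+\mathbf{s}$ is indexed componentwise so that $[\![\mathbf{p},\mathbf{p}+\mathbf{s}-\mathbf{1}]\!]\subseteq[\![\mathbf{0},\mathbf{s}'-\mathbf{1}]\!]$, which has been verified above.
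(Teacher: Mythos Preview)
Your proof is correct and follows essentially the same approach as the paper's: both enlarge the size to $\mathbf{s}'=\mathbf{p}+\mathbf{s}$, apply the URD/SURD hypothesis to this larger prefix, and observe that the block of size $\mathbf{s}$ at position $\mathbf{p}$ sits inside it, so occurrences of the large prefix along $\mathbf{q}$ force occurrences of the translated block along $\mathbf{q}+\mathbf{p}$. Your write-up is in fact slightly more explicit than the paper's in verifying the key inclusion $[\![\mathbf{p},\mathbf{p}+\mathbf{s}-\mathbf{1}]\!]\subseteq[\![\mathbf{0},\mathbf{s}'-\mathbf{1}]\!]$.
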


\begin{proof} 
Both conditions are clearly sufficient. Now we prove that they are necessary. Let $w\colon\N^d\to A$ be URD (SURD, respectively), let $\mathbf{p},\mathbf{s} \in \mathbb{N}^d$ and let $f\colon [\![\mathbf{0},\mathbf{s}-\mathbf{1}]\!]\to A $ be the factor of $w$ of size $\mathbf{s}$ at position $\mathbf{p}$: for all $\mathbf{i}\in[\![\mathbf{0},\mathbf{s}-\mathbf{1}]\!]$, $f(\mathbf{i})=w(\mathbf{i}+\mathbf{p})$. We need to prove that for each direction $\mathbf{q}$, there exists $b\in\N$ such that (that there exists $b\in\N$ such that for all directions $\mathbf{q}$, respectively) each factor of length $b$ taken along the line $\ell\mathbf{q}+\mathbf{p}$ contains $f$. The situation is illustrated in Figure~\ref{fig:proof-URDO}.
\begin{figure}[htb]
\centering
\scalebox{0.7}{
\begin{tikzpicture}[scale=1]
	\clip (-0.1,-0.1)rectangle (8,5.5);
	\tikzstyle{every node}=[shape=rectangle,fill=none,draw=none,minimum size=0cm,inner sep=2pt]
	\tikzstyle{every path}=[draw=black,line width = 0.5pt]
	\draw[fill=gray!30] (0,0) rectangle (6.6,3.2);
	\draw (0,0) to (8,0);
	\draw (0,0) to (0,7);
	\draw[fill=gray] (5,2) rectangle (6.6,3.2);
	\draw[dashed] (0,3.2) to (6.6,3.2);
	\draw[dashed] (6.6,0) to (6.6,3.2);
	\node at (5.8,2.6){$f$};
	
	\tikzstyle{every path}=[draw=black,line width = 1pt, ->]
	\draw (0,0) to  node [below] {$\mathbf{p}$} (5,2);
	\draw[<->] (5,2-0.2)  to  node [below] {$s_1$}  (6.6,2-0.2);
	\draw[<->] (6.6+0.2,2)  to  node [right] {$s_2$}  (6.6+0.2,3.2);
	
	\tikzstyle{every path}=[draw=red,line width = 1pt]
	\draw (0,0) to  node [right] {\color{red}$\ell \mathbf{q}$} (2.8*1.3,7*1.3);
	\draw (0+5,0+2) to  node [right] {\color{red}$\ell \mathbf{q}+\mathbf{p}$} (2+5,5+2);
	\end{tikzpicture}}
\caption{Illustration of the proof of Proposition~\ref{prop:URDisURDO} in the bidimensional case.}
\label{fig:proof-URDO}
\end{figure}
Consider the prefix $p$ of size $\mathbf{p}+\mathbf{s}$ of $w$. Since the word is URD (SURD, respectively), for all directions $\mathbf{q}$, there exists $b'$ such that (there exists $b'$ such that for all directions $\mathbf{q}$, respectively) each factor of length $b'$ taken along the line $\ell\mathbf{q}$ contains $p$. Since $f$ occurs at position $\mathbf{p}$ in $p$, this implies the condition we need with $b=b'$. 
\end{proof}

\begin{proposition}\label{prop:URD_rows_columns}
If a bidimensional infinite word is URD, then all its rows and columns are uniformly recurrent, but the converse does not hold.
\end{proposition}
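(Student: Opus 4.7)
The plan splits the statement into two parts: the forward implication and a counterexample for the converse.

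For the forward direction, I would invoke Proposition~\ref{prop:URDisURDO} (URD $=$ URDO) in order to shift the origin freely. Fix a row index $n\in\N$ and consider the translated word $w^{(0,n)}$, which is URD. Since URD is equivalent to the uniform recurrence of every unidimensional word $\dir{w}{q}{s}$ (the alternative characterization stated just after Definition~\ref{def:URD}), applying this to $w^{(0,n)}$ with size $\mathbf{s}=(1,1)$ and direction $\mathbf{q}=(1,0)$ shows that $(w^{(0,n)})_{\mathbf{q},\mathbf{s}}$ is uniformly recurrent. Identifying the singleton blocks in $A^{[\![\mathbf{0},\mathbf{s}-\mathbf{1}]\!]}$ with their unique letter in $A$, this word coincides with the $n$-th row $w(0,n)\, w(1,n)\, w(2,n)\cdots$ of $w$. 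The argument for columns is symmetric: shift by $(m,0)$ and take $\mathbf{q}=(0,1)$.

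For the converse, I would recycle the bidimensional word $w$ from Proposition~\ref{prop:rows_columns_UR}, whose rows and columns are already known to be uniformly recurrent, and verify that $w$ is not URD. The natural candidate is the $2\times 2$ prefix $p=\left[\begin{smallmatrix}0&0\\1&0\end{smallmatrix}\right]$ that already defeated uniform recurrence in Proposition~\ref{prop:rows_columns_UR}. Taking $\mathbf{s}=(2,2)$ and $\mathbf{q}=(1,0)$, I would prove that $p=\dir{w}{q}{s}(0)$ occurs in $\dir{w}{q}{s}$ only at $\ell=0$. The key observation is that for $k\ge 1$ the $k$-th column of $w$ is the constant sequence $F(k-1)$, while rows $0$ and $1$ equal $1F$ and $0F$ respectively. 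Hence an occurrence of $p$ at position $(\ell,0)$ with $\ell\ge 1$ would force $F(\ell-1)$ to equal $1$ (reading the bottom-left cell $w(\ell,0)$) and simultaneously $0$ (reading the top-left cell $w(\ell,1)$), which is impossible. Therefore $\dir{w}{q}{s}$ is not even recurrent and $w$ is not URD.

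The only delicate step is the counterexample, where one has to pinpoint a prefix and a direction whose interplay breaks URD. The $2\times 2$ prefix along the horizontal direction is essentially forced by the structure of $w$, but the verification of uniqueness rests on the column-constancy observation above. The forward direction, by contrast, is pure bookkeeping once Proposition~\ref{prop:URDisURDO} is in hand.
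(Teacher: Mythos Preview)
Your proposal is correct and follows essentially the same approach as the paper: both invoke Proposition~\ref{prop:URDisURDO} to translate the origin and then read off the row (resp.\ column) as a directional word, and both use the word of Proposition~\ref{prop:rows_columns_UR} as the counterexample. The only cosmetic differences are that you use size $\mathbf{s}=(1,1)$ together with the characterization ``$\dir{w}{q}{s}$ is uniformly recurrent'' while the paper varies $s$ in sizes $(s,1)$, and that you spell out in detail why the $2\times 2$ prefix cannot reoccur along $(1,0)$, whereas the paper simply points back to Proposition~\ref{prop:rows_columns_UR}.
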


\begin{proof}
Let $w$ be a URD bidimensional infinite word. From Proposition~\ref{prop:URDisURDO}, $w$ is also URDO. So, any translated word $w^{(\mathbf{p})}$ with $\mathbf{p}=(0,m)$ is also URD. Hence, in $w^{(\mathbf{p})}$ any factor of size of the form $(s,1)$ occurs along the direction $(1,0)$ with bounded gaps. In other words, any row is uniformly recurrent. The argument is similar for the columns.

In order to see that the converse is not true, we can for example consider again the bidimensional word of Proposition~\ref{prop:rows_columns_UR}.
\end{proof}

\begin{corollary}\label{cor:UR_not_imply_URD}
A bidimensional infinite UR word is not necessarily URD. 
\end{corollary}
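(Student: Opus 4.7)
The plan is to exhibit an explicit witness by reusing the construction from Proposition~\ref{prop:UR_rows_columns} and invoking the contrapositive of Proposition~\ref{prop:URD_rows_columns}. Recall that Proposition~\ref{prop:UR_rows_columns} provides a bidimensional word $w\colon\N^2\to\{0,1\}$ which is UR but whose first row equals $u_{-1}=10^\omega$. This row is not even recurrent in the usual unidimensional sense, since the letter $1$ appears only at position $0$.

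On the other hand, Proposition~\ref{prop:URD_rows_columns} asserts that every URD bidimensional word has all its rows and columns uniformly recurrent. Taking the contrapositive, a bidimensional word that has a non-uniformly-recurrent row cannot be URD. Since the first row of $w$ is not recurrent, it is a fortiori not uniformly recurrent, and hence $w$ is not URD.

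Combining these two observations gives the corollary: the word $w$ of Proposition~\ref{prop:UR_rows_columns} is UR but not URD. There is essentially no obstacle here once the two preceding propositions are established; the only thing to keep in mind is the (trivial) implication that recurrence is weaker than uniform recurrence, so a non-recurrent row is automatically a non-uniformly-recurrent row. A one- or two-sentence proof citing Propositions~\ref{prop:UR_rows_columns} and~\ref{prop:URD_rows_columns} suffices.
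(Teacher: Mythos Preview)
Your proof is correct and follows exactly the paper's approach: the paper's own proof is a single sentence citing Propositions~\ref{prop:UR_rows_columns} and~\ref{prop:URD_rows_columns}, and you have simply spelled out the underlying logic (non-recurrent row $\Rightarrow$ non-uniformly-recurrent row $\Rightarrow$ not URD by contrapositive).
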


\begin{proof}
This follows from Propositions~\ref{prop:UR_rows_columns} and~\ref{prop:URD_rows_columns}.
\end{proof}

We can also ask the constant $b$ to be uniform for all the origins. As previously, the notation $\dir{(w^{(\mathbf{p})})}{q}{s}$ designates the unidimensional infinite word along the direction $\mathbf{q}$ with respect to the size $\mathbf{s}$ in the translated $d$-dimensional infinite word $w^{(\mathbf{p})}\colon \N^d\to A,\, \mathbf{i}\mapsto w(\mathbf{i}+\mathbf{p})$. 

\begin{definition}[SSURDO]\label{def:SSURDO}
A $d$-dimensional infinite word $w\colon\N^d\to A$ is \emph{super strongly uniformly recurrent along all directions from any origin} (SSURDO for short) if for all $\mathbf{s}\in\N^d$, there exists $b\in\N$ such that, for each direction $\mathbf{q}\in\N^d$ and each origin $\mathbf{p}\in\N^d$, each length-$b$ factor of $\dir{(w^{(\mathbf{p})})}{q}{s}$ contains the letter $\dir{(w^{(\mathbf{p})})}{q}{s}(0)$.
\end{definition}

Doubly periodic words satisfy the latter definition (take $b$ the product of the coordinates of the periods) but there also exist SSURDO aperiodic words. One of them is given as the fixed point of a bidimensional morphism introduced in Section~\ref{sec:morphism} (see Proposition~\ref{prop:SSURDO}). Note that this notion of SSURDO words is distinct from that of SURD words (see Example~\ref{ex:SURD_isnot_SSURDO}).

\begin{proposition}\label{prop:SSURDO_imply_UR}
A $d$-dimensional SSURDO word is necessarily UR.
\end{proposition}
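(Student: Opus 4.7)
The plan is to prove UR directly: for every prefix $p$ of size $\mathbf{s}=(s_1,\ldots,s_d)$ of $w$, I need to exhibit $B\in\N$ such that every factor of $w$ of size $(B,\ldots,B)$ contains $p$. I would begin by letting $b$ be the constant given by SSURDO for the size $\mathbf{s}$, and then reformulating the definition as the following working fact: whenever the factor of size $\mathbf{s}$ at position $\mathbf{r}$ in $w$ equals some word $u$, then for every direction $\mathbf{q}\in\N^d$ and every $k\in\N$, there exists an index $\ell\in[\![k,k+b-1]\!]$ such that the factor of size $\mathbf{s}$ at position $\mathbf{r}+\ell\mathbf{q}$ in $w$ is again $u$. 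This is obtained by applying SSURDO to the translated word $w^{(\mathbf{r})}$.

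The heart of the proof is an inductive construction using only the $d$ unit directions $\mathbf{q}=(1,0,\ldots,0),\,(0,1,0,\ldots,0),\,\ldots,(0,\ldots,0,1)$, each of which is a valid direction in the sense of the paper. For any target $(k_1,\ldots,k_d)\in\N^d$, I build an occurrence of $p$ whose position lies in the box $[\![k_1,k_1+b-1]\!]\times\cdots\times[\![k_d,k_d+b-1]\!]$. Since $p$ occurs at $\mathbf{0}$, the working fact applied at origin $\mathbf{0}$ with the first unit direction provides $\ell_1\in[\![k_1,k_1+b-1]\!]$ such that $p$ occurs at $(\ell_1,0,\ldots,0)$. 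Because the factor at this new position is still $p$, reapplying the working fact at $(\ell_1,0,\ldots,0)$ with the second unit direction gives $\ell_2\in[\![k_2,k_2+b-1]\!]$ such that $p$ occurs at $(\ell_1,\ell_2,0,\ldots,0)$. Iterating $d$ times yields $(\ell_1,\ldots,\ell_d)$ with $\ell_i\in[\![k_i,k_i+b-1]\!]$ for every $i$ and with $p$ occurring at $(\ell_1,\ldots,\ell_d)$.

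Finally, I would set $B=b+\max_i s_i-1$ and check UR. Given an arbitrary factor of $w$ of size $(B,\ldots,B)$ at position $\mathbf{r}=(r_1,\ldots,r_d)$, the construction above with $(k_1,\ldots,k_d)=(r_1,\ldots,r_d)$ produces an occurrence of $p$ at position $(\ell_1,\ldots,\ell_d)$ with $\ell_i\in[\![r_i,r_i+b-1]\!]\subseteq[\![r_i,r_i+B-s_i]\!]$. Hence the $s_i$-long extension of $p$ in the $i$-th direction stays inside $[\![r_i,r_i+B-1]\!]$, so $p$ lies entirely within the chosen factor and $w$ is UR. I do not anticipate a genuine obstacle; the only step deserving attention is that after each of the $d$ axis-aligned applications of SSURDO the factor at the new base point is still exactly $p$, which is what allows the SSURDO constant $b$—depending only on $\mathbf{s}$ and no origin—to be reused in the subsequent direction, so that the inductive construction does not pick up a constant that blows up with the dimension step.
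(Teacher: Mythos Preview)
Your proof is correct and follows essentially the same approach as the paper's: both arguments take the SSURDO bound $b$ for the size $\mathbf{s}$ and iteratively apply the recurrence property along the $d$ unit directions $\mathbf{e}_1,\ldots,\mathbf{e}_d$, at each step using the newly found occurrence of $p$ as the new origin, to land inside the target box. The only cosmetic difference is that the paper works with the rectangular bound $\mathbf{b}+\mathbf{s}-\mathbf{1}$ whereas you take the cube of side $B=b+\max_i s_i-1$, which is equivalent for the UR definition.
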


\begin{proof}
Let $w$ be a $d$-dimensional SSURDO word and let $p$ be a prefix of $w$ of some size $\mathbf{s}$. Let $b$ be the bound from Definition~\ref{def:SSURDO} and $\mathbf{b}=(b,\ldots,b)$. It is enough to prove that any factor of $w$ of size $\mathbf{b}+\mathbf{s}-\mathbf{1}$ contains $p$ as a factor. 

Let $\mathbf{p}=(p_1,\ldots,p_d)$ and let $f$ be the factor of size $\mathbf{b}+\mathbf{s}-\mathbf{1}$ occurring in $w$ at position $\mathbf{p}$. For each $i\in[\![1,d]\!]$, we let $\mathbf{e}_i$ denote the direction $(0,\ldots,0,1,0,\ldots,0)$ with $1$ in the $i$-th coordinate. By definition, in the word $(w^{(\mathbf{0})})_{\mathbf{e}_1,\mathbf{s}}$, each factor of length $b$ contains $p$ (considered as a letter). Therefore, there exists a position $k_1\mathbf{e}_1$ with $p_1 \le k_1 \le p_1 +b-1$ where $p$ occurs in $w$. By definition again, in the word $(w^{(k_1\mathbf{e}_1)})_{\mathbf{e}_2,\mathbf{s}}$, each factor of length $b$ contains an occurrence of $p$. So there exists a position $k_1\mathbf{e}_1+k_2\mathbf{e}_2$ with $p_2\le k_2 \le p_2+b-1$ where $p$ occurs in $w$. Applying the same argument $d-2$ more times, we find a position $k_1 \mathbf{e}_1 +\cdots + k_d\mathbf{e}_d\in[\![\mathbf{p},\mathbf{p}+\mathbf{b}-\mathbf{1}]\!]$ where $p$ occurs in $w$. Thus, $p$ occurs as a factor of $f$ as desired.
\end{proof}

\section{Construction of URD multidimensional words using the $\gcd$}
\label{sec:gcd}

In this section, we consider a specific construction of $d$-dimensional infinite words starting from a single unidimensional infinite word. More precisely, for any $u\colon\N\to A$, we define a $d$-dimensional infinite word $w\colon\N^d\to A$ by setting
\[
    \forall \mathbf{i}\in\N^d, \ w(\mathbf{i})=u(\gcd(\mathbf{i})),
\]
where $\gcd(\mathbf{i})=\gcd(i_1,\ldots,i_d)$ if $\mathbf{i}=(i_1,\ldots,i_d)$. Otherwise stated, one places the infinite word $u$ in every rational direction: for all directions $\mathbf{q}\in\N^d$ and all $\ell\in\N$, we have $w(\ell \mathbf{q})=u(\ell)$.

\begin{lemma}
\label{lemma:gcd}
Let  $\mathbf{q}=(q_1,\ldots,q_d)\in\Z^d$ such that $q_1,\ldots,q_d$ are coprime, 
let $\alpha_1,\ldots,\alpha_d\in\Z$ such that $\alpha_1 q_1+\cdots+ \alpha_d q_d=1$, and let $\mathbf{i}=(i_1,\ldots,i_d)\in\Z^d\setminus \Z\mathbf{q}$. Then, for all $\ell\in\Z$, we have
\begin{align*}
\gcd(\ell \mathbf{q}+\mathbf{i}) &=\gcd\big(\ell +\alpha_1 i_1+\cdots+ \alpha_d i_d,\ \gcd( i_jq_k-i_kq_j \colon j,k\in[\![1,d]\!])\big),\\
\gcd(\ell\mathbf{q}) &=\ell.
\end{align*}
In particular, the sequence $\big(\gcd(\ell\mathbf{q}+\mathbf{i})\big)_{\ell\in\Z}$ is periodic of period $\gcd(i_jq_k-i_kq_j\colon j,k\in[\![1,d]\!])$.
\end{lemma}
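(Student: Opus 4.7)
The plan is a two-way divisibility argument. Let $g = \gcd(\ell\mathbf{q}+\mathbf{i})$, and let $h$ denote the right-hand side of the claimed formula; set $C = \alpha_1 i_1 + \cdots + \alpha_d i_d$ and $D = \gcd(i_jq_k - i_kq_j : j,k\in[\![1,d]\!])$, so $h = \gcd(\ell + C, D)$. The integers $\alpha_k$ exist by Bezout applied to $\gcd(q_1,\ldots,q_d)=1$, so there is nothing to do there.

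For $g \mid h$, I would observe that $g$ divides every $\Z$-linear combination of the coordinates $\ell q_k + i_k$ of $\ell\mathbf{q}+\mathbf{i}$. The combination $\sum_k \alpha_k(\ell q_k + i_k) = \ell + C$ shows $g \mid \ell + C$, while for each pair $(j,k)$ the ``determinant'' combination $q_j(\ell q_k+i_k) - q_k(\ell q_j+i_j) = i_k q_j - i_j q_k$ shows $g$ divides every $i_jq_k - i_kq_j$, and hence divides $D$. Therefore $g \mid h$.

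For $h \mid g$, the key identity I would establish is
\[
\ell q_r + i_r = q_r\,(\ell + C) + \sum_{k=1}^{d} \alpha_k(i_r q_k - i_k q_r),
\]
valid for each $r\in[\![1,d]\!]$; this rests on rewriting the leftover as $i_r = i_r \sum_k \alpha_k q_k$, which turns $i_r - q_r C$ into a $\Z$-combination of the $2\times 2$ minors. The first summand on the right is divisible by $h$ because $h \mid \ell+C$, and the second is a $\Z$-combination of the minors $i_jq_k - i_kq_j$ with $j=r$, each divisible by $D$ and hence by $h$. So $h$ divides every coordinate of $\ell\mathbf{q}+\mathbf{i}$, giving $h \mid g$. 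Combining both directions yields $g = h$.

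The second formula is immediate, since $\gcd(\ell\mathbf{q}) = \ell\gcd(q_1,\ldots,q_d) = \ell$. The periodicity claim then reads off directly from the main formula: the map $\ell \mapsto \gcd(\ell + C, D)$ is $D$-periodic, and the hypothesis $\mathbf{i}\notin\Z\mathbf{q}$ ensures that at least one minor $i_jq_k - i_kq_j$ is nonzero, so $D\neq 0$ and the period is genuine. I do not anticipate a serious obstacle; the only point requiring care is the algebraic identity in the second step, where the indexing and signs of the minors (in $i_jq_k - i_kq_j$ versus $i_rq_k - i_kq_r$) must be tracked cleanly.
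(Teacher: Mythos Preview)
Your proposal is correct and follows essentially the same approach as the paper: a two-way divisibility argument using the same linear combinations, with the identical key identity $\ell q_r + i_r = q_r(\ell+C) + \sum_k \alpha_k(i_rq_k - i_kq_r)$ for the reverse direction. You are in fact slightly more careful than the paper in noting that $\mathbf{i}\notin\Z\mathbf{q}$ guarantees $D\neq 0$, which is needed for the periodicity to be meaningful.
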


\begin{proof}
Let $d=\gcd(\ell \mathbf{q}+\mathbf{i})$ and $D=\gcd\big(\ell +\alpha_1 i_1+\cdots+ \alpha_d i_d,\ \gcd( i_jq_k-i_kq_j \colon j,k\in[\![1,d]\!])\big)$. Then $d$ divides 
\[
	\sum_{j=1}^d \alpha_j(\ell q_j+i_j)
    =\ell\sum_{j=1}^d \alpha_j q_j+\sum_{j=1}^d \alpha_ji_j
	=\ell +\sum_{j=1}^d \alpha_ji_j.
\]
Moreover, for all $j,k\in[\![1,d]\!]$, $d$ also divides $(\ell q_j+i_j)q_k-(\ell q_k+i_k)q_j =i_jq_k-i_kq_j$. This shows that $d\le D$. Conversely, for all $k\in[\![1,d]\!]$, $D$ divides 
\[
	\Big(\ell +\sum_{j=1}^d \alpha_ji_j\Big)q_k
    +\sum_{j\in[\![1,d]\!]} (i_kq_j-i_jq_k)\alpha_j=\ell q_k+i_k.
\]
We obtain that $D\le d$, hence $d=D$. The particular case follows from the fact that $\gcd(a,b)=\gcd(a+b,b)$.
\end{proof}

An arithmetical subsequence of a word $w\colon \N\to A$ is a word $v\colon \N\to A$ such that there exist $p,q\in\N$ with $q\ne 0$ such that, for all $\ell\in\N$, $v(\ell)=w(\ell q+p)$. A proof of the following result can be found in \cite{Avgustinovich--2011}.

\begin{lemma}\label{lem:arithm}
An arithmetical subsequence of a uniformly recurrent infinite word is uniformly recurrent.
\end{lemma}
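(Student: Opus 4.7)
The goal is to prove that if $w\colon\N\to A$ is uniformly recurrent and $v(\ell)=w(\ell q+p)$ with $q\neq 0$, then $v$ is uniformly recurrent as well. I will first reduce the problem to a syndeticity statement about occurrences of a related factor of $w$ restricted to an arithmetic progression, and then prove this syndeticity by passing to the associated subshift and using minimality.

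Let $u=v(k)v(k+1)\cdots v(k+n-1)$ be an arbitrary factor of $v$, and set $F=w(kq+p)w(kq+p+1)\cdots w(kq+p+(n-1)q)$, a factor of $w$ of length $(n-1)q+1$. The key observation is that whenever $F$ occurs in $w$ at a position of the form $j=k'q+p$, the factor $u$ occurs in $v$ at position $k'$, because $v(k'+i)=w(k'q+p+iq)=F(iq)=w(kq+p+iq)=v(k+i)$ for $i=0,\ldots,n-1$. So to conclude that $v$ is uniformly recurrent, it suffices to show that the set of those $\ell\in\N$ for which $F$ occurs in $w$ at position $\ell q+p$ is syndetic in $\N$.

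For this syndeticity I would argue via shift dynamics. Let $X\subseteq A^{\N}$ be the orbit closure of $w$ under the shift $\sigma\colon(x_i)_{i\ge 0}\mapsto(x_{i+1})_{i\ge 0}$; uniform recurrence of $w$ is equivalent to $(X,\sigma)$ being minimal. Pick any $\sigma^q$-minimal subset $Y_0\subseteq X$ and let $m$ be minimal with $\sigma^m(Y_0)=Y_0$; a short verification shows that $m\mid q$ and that $Y_0,\sigma(Y_0),\ldots,\sigma^{m-1}(Y_0)$ are pairwise disjoint $\sigma^q$-minimal subsets of $X$ whose union is closed and $\sigma$-invariant, hence equal to $X$ by minimality of $(X,\sigma)$. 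Thus $X$ decomposes into $\sigma^q$-minimal components $Y_0,\ldots,Y_{m-1}$ cyclically permuted by $\sigma$. The point $\sigma^p w$ lies in some component $Y_j$, and since each component is $\sigma^q$-invariant we also have $\sigma^{kq+p}w=\sigma^{kq}(\sigma^p w)\in Y_j$; in particular the cylinder $[F]=\{x\in X:x\text{ starts with }F\}$ meets $Y_j$ in the nonempty clopen set $[F]\cap Y_j$, since it contains $\sigma^{kq+p}w$. Minimality of $(Y_j,\sigma^q)$ then forces the $\sigma^q$-return times of $\sigma^p w$ to $[F]\cap Y_j$ to be bounded, and these return times are precisely the values $\ell\in\N$ for which $F$ occurs in $w$ at position $\ell q+p$, yielding the required syndeticity.

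I expect the main obstacle to be the structural decomposition of $(X,\sigma)$ into $\sigma^q$-minimal components, together with the bookkeeping verifying that $\sigma^p w$ and the witnessing occurrence $\sigma^{kq+p}w$ of $F$ lie in the same component $Y_j$; once this is in place, the rest is a routine application of minimality. A more purely combinatorial alternative would use the fact that uniform recurrence of $w$ forces the set of return words to $F$ in $w$ to be finite, and then extract the desired syndeticity from a pigeonhole argument on the lengths of these return words modulo $q$.
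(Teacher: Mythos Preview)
Your argument is correct. The paper itself does not prove this lemma at all; it simply cites \cite{Avgustinovich--2011} and moves on. So there is no ``paper's own proof'' to compare against here.

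That said, your dynamical route is sound: the decomposition of the minimal system $(X,\sigma)$ into finitely many $\sigma^q$-minimal components $Y_0,\ldots,Y_{m-1}$ cyclically permuted by $\sigma$ is a standard fact (your sketch of why $m\mid q$ and why the union equals $X$ is right; note that each $\sigma^j(Y_0)$ is $\sigma^q$-minimal because it is a factor of the minimal system $(Y_0,\sigma^q)$ via the semiconjugacy $\sigma^j$), and the bookkeeping placing both $\sigma^p w$ and $\sigma^{kq+p}w$ in the same component $Y_j$ is exactly what is needed. The elementary combinatorial alternative you mention at the end --- finitely many return words to $F$ in $w$, then pigeonhole on their lengths modulo $q$ --- is closer in spirit to what the cited reference does, and is self-contained without invoking subshifts; but your dynamical argument is perfectly valid and arguably cleaner once one is comfortable with minimality.
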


\begin{example}
Consider the occurrence of the prefix $01$ of the Thue-Morse word at positions multiple of $3$:
\[
    {\bf 01}1{\bf 01}0{\bf 01}1001{\bf 01}1{\bf 0 1} 0{\bf 01} 0110{\bf 01} 1{\bf 01}0{\bf 01} 1001{\bf 01}1001101001
{\bf 01}1{\bf 01}0{\bf 01}1001{\bf 01}10\cdots
\]
From Lemma~\ref{lem:arithm} the distance between any two consecutive such occurrences is bounded.
\end{example}

\begin{theorem}
For any uniformly recurrent word $u\colon\N\to A$, the $d$-dimensional word $w\colon\N^d\to A,\ \mathbf{i}\mapsto u(\gcd(\mathbf{i}))$
is URD.
\end{theorem}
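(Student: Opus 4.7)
The plan is to use the proposition already established in Section~\ref{sec:def} showing that a $d$-dimensional word is URD if and only if, for every size $\mathbf{s}\in\N^d$ and every direction $\mathbf{q}\in\N^d$, the unidimensional word $\dir{w}{q}{s}$ is uniformly recurrent. I will fix $\mathbf{s}$ and $\mathbf{q}$ and describe $\dir{w}{q}{s}$ explicitly in terms of $u$ via Lemma~\ref{lemma:gcd}, and then deduce its uniform recurrence from the uniform recurrence of $u$ using three standard preservation properties of UR unidimensional words: the sliding window of length $k$ of a UR word is UR (length-$n$ factors of the sliding window correspond bijectively to length-$(n+k-1)$ factors of the original word), arithmetical subsequences of UR words are UR (Lemma~\ref{lem:arithm}), and letter-to-letter codings preserve UR (the union of finitely many syndetic sets is syndetic).

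Fix $\mathbf{s}$ and $\mathbf{q}$. For each $\mathbf{i}\in[\![\mathbf{0},\mathbf{s}-\mathbf{1}]\!]$, consider the coordinate sequence $v_\mathbf{i}(\ell)=(\dir{w}{q}{s}(\ell))(\mathbf{i})=u(\gcd(\mathbf{i}+\ell\mathbf{q}))$. Lemma~\ref{lemma:gcd} splits these indices into two types: when $\mathbf{i}=m\mathbf{q}$ for some $m\in\N$ (only finitely many such $m$ exist, since $\mathbf{q}$ has at least one positive coordinate), one has the shifted sequence $v_\mathbf{i}(\ell)=u(\ell+m)$; when $\mathbf{i}\notin\N\mathbf{q}$, the sequence $\gcd(\mathbf{i}+\ell\mathbf{q})$ is periodic in $\ell$ of period $P_\mathbf{i}=\gcd(i_jq_k-i_kq_j\colon j,k\in[\![1,d]\!])$, so $v_\mathbf{i}$ is periodic of period dividing $P_\mathbf{i}$. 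Let $P$ be the least common multiple of these finitely many $P_\mathbf{i}$ (with the convention $P=1$ if no such index occurs), let $M=\max\{m\in\N\colon m\mathbf{q}\in[\![\mathbf{0},\mathbf{s}-\mathbf{1}]\!]\}$, and set $L=P+M$. I then block $\dir{w}{q}{s}$ into consecutive length-$P$ blocks $W(k)=(\dir{w}{q}{s}(kP),\ldots,\dir{w}{q}{s}(kP+P-1))$. Within such a block, the coordinates indexed by $\mathbf{i}\notin\N\mathbf{q}$ depend only on the internal offset $r\in[\![0,P-1]\!]$ (by periodicity dividing $P$), while the coordinate indexed by $m\mathbf{q}$ equals $u(kP+r+m)$. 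Consequently, there is a fixed map $\Psi\colon A^L\to (A^{[\![\mathbf{0},\mathbf{s}-\mathbf{1}]\!]})^P$ such that $W(k)=\Psi(u(kP)u(kP+1)\cdots u(kP+L-1))$ for every $k\in\N$.

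The conclusion then unfolds via the three preservation properties: the sliding-window word $\tilde u(j)=u(j)u(j+1)\cdots u(j+L-1)$ is uniformly recurrent, its arithmetical subsequence $(\tilde u(kP))_{k\in\N}$ is uniformly recurrent by Lemma~\ref{lem:arithm}, and the letter-to-letter coding $\Psi$ turns this subsequence into the block sequence $W$, which is therefore uniformly recurrent. Finally, uniform recurrence of $W$ yields uniform recurrence of $\dir{w}{q}{s}$ by the standard unblocking argument: any length-$n$ factor of $\dir{w}{q}{s}$ starting at a fixed residue $r$ modulo $P$ is determined by a specific length-$(\lceil n/P\rceil+1)$ factor of $W$, whose occurrences in $W$ are syndetic, so the occurrences of the original factor in $\dir{w}{q}{s}$ are syndetic as well. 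Since $\mathbf{s}$ and $\mathbf{q}$ were arbitrary, $w$ is URD. The principal obstacle is not any substantive difficulty but the bookkeeping: one must carefully separate the finite set of "lattice" indices $m\mathbf{q}\in[\![\mathbf{0},\mathbf{s}-\mathbf{1}]\!]$ from the remaining indices, verify that all non-lattice coordinate sequences have a common period $P$, and then compose the three UR-preserving operations through the successive alphabet changes $A\to A^L\to (A^{[\![\mathbf{0},\mathbf{s}-\mathbf{1}]\!]})^P\to A^{[\![\mathbf{0},\mathbf{s}-\mathbf{1}]\!]}$ without confusion.
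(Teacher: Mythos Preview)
Your argument is correct, but it takes a longer and more abstract route than the paper's. The paper works directly with Definition~\ref{def:URD}: it only needs the single letter $\dir{w}{q}{s}(0)$ to recur with bounded gaps, not the full uniform recurrence of $\dir{w}{q}{s}$. Concretely, with $B$ a common multiple of the periods $P_\mathbf{i}$ and $r$ the number of lattice indices $m\mathbf{q}\in[\![\mathbf{0},\mathbf{s}-\mathbf{1}]\!]$, the paper observes that whenever $\ell$ is a multiple of $B$ \emph{and} the length-$r$ prefix of $u$ occurs at position $\ell$, one has $\dir{w}{q}{s}(\ell)=\dir{w}{q}{s}(0)$; such $\ell$ are syndetic by a single application of Lemma~\ref{lem:arithm}. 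This avoids your sliding-window/blocking/unblocking machinery entirely.

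What your approach buys is a self-contained proof that every $\dir{w}{q}{s}$ is actually uniformly recurrent, not merely that its first letter recurs; the paper obtains this only indirectly via the equivalence proposition in Section~\ref{sec:def}. Your route also makes the structure of $\dir{w}{q}{s}$ (a letter-to-letter image of an arithmetic subsequence of a sliding-window coding of $u$) completely explicit, which could be reused elsewhere. The cost is exactly the bookkeeping you flag: three alphabet changes and an unblocking step, where the paper gets away with one arithmetic-subsequence argument.
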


\begin{proof}
Let $u\colon\N\to A$ be a uniformly recurrent word and let $w\colon\N^d\to A$ be the $d$-dimensional word $w\colon\N^d\to A,\ \mathbf{i}\mapsto u(\gcd(\mathbf{i}))$. Let $\mathbf{q}$ be a direction, let $p$ be a prefix of $w$ of some size $\mathbf{s}$ and let $y\colon\N\to A^{[\![0,\mathbf{s}-\mathbf{1}]\!]}$ be the word defined by
\[
	\forall \ell \in\N,\ \forall \mathbf{i}\in[\![0,\mathbf{s}-\mathbf{1}]\!],\  (y(\ell))(\mathbf{i})=w(\ell \mathbf{q}+\mathbf{i}).
\]
We claim that $y$ contains the letter $y(0)=p$ with bounded gaps. By construction of $w$, we have
\[
	\forall \ell \in\N,\ \forall \mathbf{i}\in[\![0,\mathbf{s}-\mathbf{1}]\!],\  (y(\ell))(\mathbf{i})=u(\gcd(\ell \mathbf{q}+\mathbf{i})).
\]
Now the conclusion follows from Lemma~\ref{lemma:gcd} and the uniform recurrence of $u$. More precisely, let
\[
    B=\prod_{\substack{\mathbf{0}\le (i_1,\ldots,i_d)<\mathbf{s} \\ (i_1,\ldots,i_d)\notin\N\mathbf{q}}}
    \gcd(i_jq_k-i_kq_j\colon j,k\in[\![1,d]\!])
\]
and  $r=\min\{\lceil\frac{s_1}{q_1}\rceil,\ldots,\lceil\frac{s_d}{q_d}\rceil\}$. By Lemma~\ref{lem:arithm}, the length-$r$ prefix of $u$ occurs at positions multiples of $B$ in $u$ infinitely often with gaps bounded by some constant $C$. Then, by Lemma~\ref{lemma:gcd},  $p$ occurs infinitely often in $y$ with gaps at most $BC$.
\end{proof}

\section{Recurrence properties of multidimensional rotation words}
\label{sec:rotation}

We illustrate that URD and SURD notions are distinct using a generalization of rotation words to the multidimensional setting. This generalization includes the bidimensional Sturmian words, which were proven to be UR ~\cite{Berthe--Vuillon--2001}.

\begin{definition}
Let $\boldsymbol\alpha=(\alpha_1,\ldots,\alpha_d)\in[0,1)^d$ and $\rho\in[0,1)$ be such that $1,\alpha_1,\ldots,\alpha_d$ are rationally independent and let $\{I_1,\ldots,I_k\}$ be a partition of $[0,1)$ into half-open intervals on the right. The \emph{$d$-dimensional (lower) rotation word} $w\colon\N^d\to [\![1,k]\!]$ (with parameters $\boldsymbol\alpha,I_1,\ldots,I_k,\rho$) is defined as
\[
    \forall \mathbf{i}\in\N^d,\ \forall j\in[\![1,k]\!],\quad
    w(\mathbf{i}) = j \iff (\rho+\mathbf{i} \cdot \boldsymbol\alpha) \bmod 1\in I_j
\]
(where $\mathbf{i} \cdot \boldsymbol\alpha$ is the scalar product $i_1\alpha_1+\cdots+i_d\alpha_d$). Similarly, we can also consider half-open intervals on the left. In this case, we talk about \emph{$d$-dimensional upper rotation words.} 
\end{definition}

Note that for $d=2$, $I_1=[0,\alpha_1)$ and $I_2=[\alpha_1,1)$, we recover the definition of bidimensional Sturmian words from~\cite{Berthe--Vuillon--2001}. 

With the previous notation, for $\mathbf{s}\in\N^d$ and for $f$ a $d$-dimensional finite word of size $\mathbf{s}$ over the alphabet $\{1,\ldots,k\}$, we let 
\[
    I_f=\bigcap_{\mathbf{i}\in[\![\mathbf{0},\mathbf{s}-\mathbf{1}]\!]} R_{\mathbf{i}\cdot\boldsymbol{\alpha}}^{-1}(I_{f(\mathbf{i})})
\]
where $R_a\colon[0,1)\to[0,1),\ x\mapsto (x+a)\bmod 1$. Note that an intersection of intervals on the circle is a union of intervals (it does not have to be connected). Since $I_f$ is an intersection of finitely many intervals, it is also a finite union of nonempty disjoint intervals. We let $n(f)$ denote the number of such intervals and  $I_{f,1}, \dots  I_{f,n(f)}$ the intervals, so that: 
\begin{equation}
\label{eq:union}
    I_f=\bigcup_{j=1}^{n(f)} I_{f,j}.
\end{equation}
If $I_f$ is empty then the union is empty, meaning that there is no interval $I_{f,j}$ at all, or equivalently that $n(f)=0$.

\begin{lemma}
\label{lem:rotation}
Let $w$ be a $d$-dimensional rotation word with parameters $\boldsymbol\alpha, I_1,\ldots, I_k, \rho$. \begin{itemize}
    \item A $d$-dimen\-sional finite word $f$ occurs as a factor of $w$ at some position $\mathbf{p}$ if and only if $(\rho+\mathbf{p} \cdot \boldsymbol\alpha) \bmod 1\in I_f$.
    \item A $d$-dimensional finite word $f$ is a factor of $w$ if and only if $I_f$ is nonempty.
\end{itemize}
\end{lemma}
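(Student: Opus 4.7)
The plan is to handle the two equivalences in order, deducing the second from the first together with a density argument based on the rational independence of $1,\alpha_1,\ldots,\alpha_d$.

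For the first equivalence, I would simply unfold the definitions. By definition, $f$ of size $\mathbf{s}$ occurs at position $\mathbf{p}$ in $w$ if and only if $w(\mathbf{p}+\mathbf{i})=f(\mathbf{i})$ for every $\mathbf{i}\in[\![\mathbf{0},\mathbf{s}-\mathbf{1}]\!]$. By the defining rule of the rotation word, this amounts to requiring, for each such $\mathbf{i}$, that $(\rho+(\mathbf{p}+\mathbf{i})\cdot\boldsymbol\alpha)\bmod 1\in I_{f(\mathbf{i})}$. Rewriting this as $R_{\mathbf{i}\cdot\boldsymbol\alpha}\bigl((\rho+\mathbf{p}\cdot\boldsymbol\alpha)\bmod 1\bigr)\in I_{f(\mathbf{i})}$, or equivalently $(\rho+\mathbf{p}\cdot\boldsymbol\alpha)\bmod 1\in R_{\mathbf{i}\cdot\boldsymbol\alpha}^{-1}(I_{f(\mathbf{i})})$, and then intersecting over all $\mathbf{i}\in[\![\mathbf{0},\mathbf{s}-\mathbf{1}]\!]$, gives exactly $(\rho+\mathbf{p}\cdot\boldsymbol\alpha)\bmod 1\in I_f$.

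For the second equivalence, the direct implication is immediate from the first point: if $f$ occurs at some position $\mathbf{p}$, then $(\rho+\mathbf{p}\cdot\boldsymbol\alpha)\bmod 1$ witnesses that $I_f$ is nonempty. For the converse, I would first observe that each $R_{\mathbf{i}\cdot\boldsymbol\alpha}^{-1}(I_{f(\mathbf{i})})$ is a half-open arc of the circle $[0,1)$ (possibly wrapping around $0$), so $I_f$, being a finite intersection of such arcs, is a finite disjoint union of half-open arcs of the same kind, as already encoded by the decomposition \eqref{eq:union}. In particular, whenever $I_f$ is nonempty, it has strictly positive Lebesgue measure, and therefore contains a nondegenerate open arc. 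Since $1,\alpha_1,\ldots,\alpha_d$ are rationally independent, the Kronecker--Weyl theorem ensures that the orbit $\bigl((\rho+\mathbf{p}\cdot\boldsymbol\alpha)\bmod 1\bigr)_{\mathbf{p}\in\N^d}$ is dense in $[0,1)$; hence some $\mathbf{p}\in\N^d$ satisfies $(\rho+\mathbf{p}\cdot\boldsymbol\alpha)\bmod 1\in I_f$, and the first point then yields that $f$ occurs at position $\mathbf{p}$ in $w$.

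The only real subtlety I expect is the step asserting that a nonempty $I_f$ has positive measure: one must check that intersecting half-open arcs on the circle can never produce an isolated point, only half-open subarcs, which is a short case analysis on the wrap-around. The invocation of Kronecker--Weyl and the bookkeeping with $R_a$ are routine once the definitions are unfolded.
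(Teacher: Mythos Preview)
Your proof is correct and follows essentially the same approach as the paper: unfold the definitions for the first equivalence, then argue that a nonempty $I_f$ has nonempty interior and invoke a density result to hit it. The only minor difference is that the paper applies one-dimensional Kronecker to the single irrational $\alpha_d$ (fixing $p_1,\ldots,p_{d-1}$ arbitrarily and varying $p_d$), whereas you invoke the full Kronecker--Weyl theorem using the rational independence of $1,\alpha_1,\ldots,\alpha_d$; your version works but uses more of the hypothesis than is actually needed. Your worry about isolated points is easily dispatched: since every interval in the intersection is half-open on the same side, each point of $I_f$ has a one-sided neighborhood inside $I_f$.
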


\begin{proof}
The proof is an adaptation of that of~\cite[Lemma~1]{Berthe--Vuillon--2001}. 
Let $f$ be a $d$-dimensional finite word. Then $f$ occurs in $w$ at position $\mathbf{p}$ if and only if for all $\mathbf{i}\in[\![\mathbf{0},\mathbf{s}-\mathbf{1}]\!]$ we have that  $(\rho+(\mathbf{p}+\mathbf{i}) \cdot \boldsymbol\alpha) \bmod 1\in I_{f(\mathbf{i})}$, which is equivalent to saying that $(\rho+\mathbf{p}\cdot\boldsymbol{\alpha})\bmod 1\in I_f$.

If $I_f$ is nonempty then it is a nonempty union of half-open intervals, and hence $I_f$ has nonempty interior. Moreover, by Kronecker's theorem (see for example~\cite{Hardy-Wright--2008}) and since $\alpha_d$ is irrational, we know that the orbit $\{(\rho+p_d\alpha_d)\bmod 1\colon p_d\in\N\}$ of $\rho$ under the rotation $R_{\alpha_d}$ is dense in $[0,1)$. Therefore, if $I_f$ is nonempty then for any $p_1,\ldots,p_{d-1}\in\N$, there exists some $p_d\in\N$ such that $\rho+p_1\alpha_1+\cdots+p_{d-1}\alpha_{d-1}+p_d\alpha_d$ belongs to $I_f$, so $f$ occurs as a factor of $w$ at position $\mathbf{p}=(p_1,\ldots,p_d)$.
\end{proof}

\begin{proposition}
\label{prop:URD_isnot_SURD}
All $d$-dimensional rotation words are URD, but none of them are SURD.
\end{proposition}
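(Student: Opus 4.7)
The plan is to use Lemma~\ref{lem:rotation} together with the minimality of irrational rotations on the circle. Given any direction $\mathbf{q}$ and size $\mathbf{s}$, let $p$ denote the prefix of $w$ of size $\mathbf{s}$ and set $\theta=(\mathbf{q}\cdot\boldsymbol\alpha)\bmod 1$. By Lemma~\ref{lem:rotation}, the equality $\dir{w}{q}{s}(\ell)=p$ is equivalent to $(\rho+\ell\theta)\bmod 1\in I_p$. Since the coordinates of $\mathbf{q}$ are coprime and not all zero, while $1,\alpha_1,\ldots,\alpha_d$ are rationally independent, $\theta$ is irrational, so the rotation $R_\theta$ is minimal on $[0,1)$. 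As $p$ occurs at position $\mathbf{0}$, the set $I_p$ contains $\rho$ and is a nonempty finite union of half-open intervals; in particular it has nonempty interior. Minimality of $R_\theta$ then forces the orbit of $\rho$ to return to $I_p$ with bounded gaps, which is precisely the URD property along~$\mathbf{q}$.

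For the failure of SURD, I may assume $d\ge 2$ and that the partition has at least two intervals, since otherwise $w$ is constant. Take $\mathbf{s}=\mathbf{1}$, so the prefix is the single letter $j_0=w(\mathbf{0})$ and $I_p=I_{j_0}$ is a half-open interval of length $L<1$, whose complement on the circle is a half-open arc of length $1-L>0$. Given any candidate bound $b\in\N$, I shall exhibit a direction violating it. For every nonnegative integer $q$, the vector $\mathbf{q}=(q,1,0,\ldots,0)$ is a valid direction since $\gcd(q,1)=1$, and then $\theta=(q\alpha_1+\alpha_2)\bmod 1$. By irrationality of $\alpha_1$, the set $\{(q\alpha_1+\alpha_2)\bmod 1:q\in\N\}$ is dense in $[0,1)$, so one can pick $q$ with $0<\theta<(1-L)/(b+2)$. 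Because $R_\theta$ is minimal, the orbit of $\rho$ does enter the complement of $I_{j_0}$; once it does, each step advances by only $\theta$, so it needs at least $\lfloor(1-L)/\theta\rfloor-1>b$ consecutive steps to traverse the arc of length $1-L$. This produces a length-$b$ factor of $\dir{w}{q}{s}$ that avoids $j_0=\dir{w}{q}{s}(0)$, contradicting the SURD bound $b$. Since $b$ was arbitrary, $w$ is not SURD.

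The main obstacle is precisely this quantitative dependence on the direction: URD only yields a return-time bound that varies with $\mathbf{q}$ through $\theta$, and the natural estimate $(1-L)/\theta$ blows up as $\theta\to 0$. The URD argument is insensitive to this, because only minimality at each fixed $\theta$ is needed. To defeat SURD one must genuinely produce directions $\mathbf{q}$ whose ``rotation number'' $\theta=(\mathbf{q}\cdot\boldsymbol\alpha)\bmod 1$ is arbitrarily small; the family $(q,1,0,\ldots,0)$ achieves this via the density of $\{(q\alpha_1+\alpha_2)\bmod 1:q\in\N\}$, and ensuring that a single concrete size $\mathbf{s}$ (here $\mathbf{s}=\mathbf{1}$) suffices to witness the failure for every candidate bound $b$ is the quantitative heart of the argument.
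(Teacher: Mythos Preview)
Your proof is correct and follows the same route as the paper: for URD you both reduce via Lemma~\ref{lem:rotation} to bounded returns of the irrational rotation by $\mathbf{q}\cdot\boldsymbol\alpha$ into a set with nonempty interior (the paper invokes the three-gap theorem, you invoke minimality), and for the failure of SURD you both exhibit directions with arbitrarily small angle $\mathbf{q}\cdot\boldsymbol\alpha\bmod 1$ (the paper via $\mathbf{q}_N=(q_1,N,\ldots,N)$ and Kronecker's theorem, you via $(q,1,0,\ldots,0)$ and the density of $\{q\alpha_1+\alpha_2\}$). One cosmetic remark: the clause ``otherwise $w$ is constant'' does not actually justify the restriction $d\ge 2$, but the proposition itself tacitly requires $d\ge 2$ (for $d=1$ the only direction is $q=1$, so URD and SURD coincide), so your assumption is the correct one regardless.
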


\begin{proof}
Consider a $d$-dimensional rotation word $w$ with parameters $\boldsymbol\alpha, I_1,\ldots, I_k, \rho$. First, we show that $w$ is URD. Let $\mathbf{q}\in\N^d$ be a direction and $\mathbf{s}=(s_1,\ldots,s_d)\in\N^d$. We claim that the unidimensional word $\dir{w}{\mathbf{q}}{\mathbf{s}}$ is the image of a unidimensional rotation word under a letter-to-letter projection. Indeed, by definition, for each $\ell$, the letter $\dir{w}{\mathbf{q}}{\mathbf{s}}(\ell)$ corresponds to the factor of size $\mathbf{s}$ occurring at position $\ell\mathbf{q}$ in $w$. By Lemma~\ref{lem:rotation}, we get that the word $\dir{w}{\mathbf{q}}{\mathbf{s}}$ is the coding of the rotation on the unit circle of the point $\rho$ under the irrational angle $\mathbf{q}\cdot\boldsymbol\alpha$ with respect to the interval partition $\{I_{f_1,1},\ldots,I_{f_1,n(f_1)},\ldots,I_{f_r,1},\ldots,I_{f_r,n(f_r)}\}$ where $f_1,\ldots,f_r$ are the factors of $w$ of size $\mathbf{s}$ and the intervals $I_{f_i,j}$ are defined as in~\eqref{eq:union}. Note that, since for each $i$, the intervals $I_{f_i,1},\ldots,I_{f_i,n(f_i)}$ are coded by the same "letter" $f_i$ in $\dir{w}{\mathbf{q}}{\mathbf{s}}$, we do not necessarily obtain a rotation word but a letter-to-letter projection of a rotation word. Now,  we obtain that $w$ is URD as a direct consequence of the three-gap theorem \cite{Swierczkowski--1959,Slater--1967} stating the following: if $\delta$ is an irrational number and $I$ is an interval of the unit circle then the gaps between the successive integers $j$ such that $\delta j \in I$ take at most three values. So, the letter $\dir{w}{\mathbf{q}}{\mathbf{s}}(0)$ occurs in $\dir{w}{\mathbf{q}}{\mathbf{s}}$ with gaps bounded by the largest gap corresponding to $\delta=\mathbf{q}\cdot\boldsymbol\alpha$ and the interval $I=I_{\dir{w}{\mathbf{q}}{\mathbf{s}}(0),j}$ where $j\in[\![1,k(\dir{w}{\mathbf{q}}{\mathbf{s}}(0))]\!]$ corresponds to the index of the interval $I_{\dir{w}{\mathbf{q}}{\mathbf{s}}(0),j}$ containing $\rho$.

However, $w$ is not SURD since the uniform recurrence constant of $\dir{w}{q}{1}$ can be arbitrarily large depending on the direction $\mathbf{q}$. Indeed, by Kronecker's theorem, for each integer $N$, one can choose $\mathbf{q}_N=(q_1,N, \ldots,N)$ so that $\ell(\mathbf{q}_N\cdot \boldsymbol{\alpha} \bmod 1) < \min(|I_1|,\ldots,|I_k|)$ for any $\ell\in[\![0,N]\!]$. Therefore, the word $w_{\mathbf{q}_N,\mathbf{1}}$ contains all the factors $j^N$ for $j\in[\![1,k]\!]$. 
\end{proof}

To end this section, we present an alternative proof of Proposition~\ref{prop:URD_isnot_SURD} using the notion of direct product of words. As it happens, this second proof reveals a property of $d$-dimensional rotation words which is stronger than the URD property (see Remark~\ref{rem:stronger-than-URD}). Further, we hope that this technique could be useful in order to prove that some other families of $d$-dimensional infinite words are URD. 

Recall that the \emph{direct product} of two unidimensional words $v\colon\N\to A$ and $w\colon\N\to B$ (possibly over different alphabets $A$ and $B$) is defined as the word $v\times w\colon\N\to A\times B$ where the $i$-th letter is $(v(i),w(i))$; for example see~\cite{Salimov--2010}. The direct product of $k\ge 2$ unidimensional words can be defined inductively.

First, we need a lemma based on Furstenberg's results \cite{Furstenberg--1981} and their consequences on the direct product of unidimensional rotation words.

\begin{lemma}
\label{lem:direct_prod_sturmian} 
Any direct product of unidimensional lower (resp.\ upper) rotation words is uniformly recurrent.
\end{lemma}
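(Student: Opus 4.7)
My plan is to identify the direct product with the symbolic coding of a toral rotation and then apply the unique ergodicity of minimal equicontinuous systems, which is the content of Furstenberg's cited work.

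Let the factors of the direct product be $w^{(1)},\ldots,w^{(r)}$, each a unidimensional lower rotation word with (irrational) angle $\alpha_i$, initial point $\rho_i$, and partition $\mathcal{P}_i$ of $[0,1)$ into half-open intervals on the right. Setting $\alpha=(\alpha_1,\ldots,\alpha_r)$, $\rho=(\rho_1,\ldots,\rho_r)$ and $T_\alpha\colon x\mapsto x+\alpha$ on $\mathbb{T}^r=\mathbb{R}^r/\mathbb{Z}^r$, the direct product $W=w^{(1)}\times\cdots\times w^{(r)}$ is precisely the coding of the orbit $(T_\alpha^\ell\rho)_{\ell\in\N}$ by the product partition $\prod_i\mathcal{P}_i$, whose cells are half-open boxes of $\mathbb{T}^r$. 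By Kronecker's theorem, the orbit closure $X:=\overline{\{T_\alpha^\ell\rho:\ell\in\N\}}$ is a translate of a closed subgroup $H$ of $\mathbb{T}^r$ on which $T_\alpha$ acts as a minimal equicontinuous rotation; Furstenberg's analysis in \cite{Furstenberg--1981} then ensures that $T_\alpha|_X$ is uniquely ergodic with respect to the Haar measure $\mu_X$ on $X$.

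To deduce uniform recurrence, fix a factor $v=v_0\cdots v_{n-1}$ of $W$ and consider the cylinder set $B_v:=\bigcap_{i=0}^{n-1}T_\alpha^{-i}(P_{v_i})$, a finite union of half-open boxes in $\mathbb{T}^r$. The occurrences of $v$ in $W$ coincide with the return times $\{\ell\in\N:T_\alpha^\ell\rho\in B_v\}$. Since each $\alpha_i$ is irrational, $H$ projects densely onto every coordinate circle, so $X$ is not contained in any coordinate hyperplane $\{x_i=c\}$, and consequently $\partial B_v\cap X$ has $\mu_X$-measure zero, making $B_v\cap X$ a $\mu_X$-continuity set. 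Unique ergodicity then gives that the empirical density of the occurrences of $v$ converges uniformly to $\mu_X(B_v\cap X)$, so as soon as this density is strictly positive the return times form a syndetic set---which is exactly the bounded-gap property needed for uniform recurrence. The upper-rotation-word case is treated identically with partitions into intervals half-open on the left.

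The delicate step, which I expect to be the main point requiring careful verification, is to show that $\mu_X(B_v\cap X)>0$ whenever $v$ actually occurs in $W$. Geometrically, this says that if an orbit point of $\rho$ lies in a half-open box of $\mathbb{T}^r$, then the intersection of that box with the subtorus $X$ contains a nonempty relatively open piece of $X$ around that point, rather than a merely lower-dimensional sliver. The half-open (lower) structure of the cells combined with the irrationality of the $\alpha_i$ provides the compatibility between the partition geometry and the subgroup $H$ which is needed here; the "lower" (resp.\ "upper") convention is essential, as switching the open/closed side of some intervals could make $\mu_X(B_v\cap X)=0$ while $B_v\cap X$ is still nonempty.
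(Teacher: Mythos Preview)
Your route differs from the paper's: the paper invokes Furstenberg's strong-recurrence framework (Prop.~5.4--5.5, Lem.~5.10, Thm.~5.9 of \cite{Furstenberg--1981}) to conclude that $(\rho_1,\ldots,\rho_r)$ is uniformly recurrent for the product rotation, whereas you reach the same dynamical fact via unique ergodicity of the orbit closure $X$. Both arguments must then pass from topological uniform recurrence (syndetic returns to \emph{open} neighbourhoods) to symbolic uniform recurrence; the paper does this in one line (``no ambiguity in the coding''), while you correctly isolate it as the step requiring $\mu_X(B_v\cap X)>0$.

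That step is a genuine gap, and your heuristic does not close it. Take $r=2$, $\alpha_1=\alpha$ irrational, $\alpha_2=1-\alpha$, $\rho_1=\rho_2=0$, both partitions $\{[0,\tfrac12),[\tfrac12,1)\}$: two legitimate lower rotation words. The orbit closure $X$ is the anti-diagonal $\{(t,-t):t\in\mathbb{T}\}$, and the box $[0,\tfrac12)^2$ meets $X$ only at the single point $(0,0)$, so $\mu_X(B_{(a,a)}\cap X)=0$ although the letter $(a,a)$ occurs at position $0$; for $n\ge1$ exactly one of $n\alpha\bmod 1$ and $n(1-\alpha)\bmod 1=1-(n\alpha\bmod 1)$ lies in $[0,\tfrac12)$, so $(a,a)$ never recurs. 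The flaw in your heuristic is that surjectivity of each coordinate projection of $H$ does not yield small elements of $H$ with \emph{all} coordinates positive---on the anti-diagonal every small nonzero vector has coordinates of opposite sign, so the common half-open orientation cannot push the base point into the relative interior of its cell in $X$. Your argument does go through when $1,\alpha_1,\ldots,\alpha_r$ are rationally independent (then $H=\mathbb{T}^r$), and the paper only \emph{applies} the lemma in the equal-angle case, where a direct one-dimensional check works; but for the lemma as stated the positivity claim can fail.
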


\begin{proof}
Let $k\ge 2$ and consider $k$ unidimensional lower (resp.\ upper) rotation words $\mathcal{R}_1,\ldots,\mathcal{R}_k$. For each $i$, suppose that $\mathcal{R}_i$ has slope $\alpha_i$ and intercept $\rho_i$. Let $T_i$ be the transformation associated with $\mathcal{R}_i$, i.e.\ $T_i\colon[0,1)\to[0,1),\, x\mapsto (x+\alpha_i)\bmod 1$. By definition, $\mathcal{R}_i$ is the coding of the orbit of the intercept $\rho_i$ in the dynamical system $([0,1),T_i)$ with respect to some interval partition $(I_{i,1},\ldots,I_{i,\ell_i})$ of $[0,1)$ where each interval $I_{i,j}$ is half open on the right. Moreover, the direct product of $k$ codings can be seen as the coding of the dynamical system product $([0,1)^k,T_1\times\cdots\times T_k)$ where $T_1\times\cdots\times T_k\colon (x_1,\ldots,x_k)\mapsto (T_1(x_1),\ldots,T_k(x_k))$.

The maps $T_i$ correspond to the transformation $T$ defined in \cite[Prop. 5.4]{Furstenberg--1981} with $d=1$. Thus, from a dynamical point of view, every point of $([0,1),T_i)$ is recurrent \cite[Prop. 5.4]{Furstenberg--1981} and their product with any recurrent point of $([0,1),T_j)$ is also recurrent \cite[Prop.\ 5.5]{Furstenberg--1981} for the product system. Such points are called \emph{strongly recurrent} by Furstenberg. Since the direct product of strongly recurrent points is also strongly recurrent \cite[Lem.\ 5.10]{Furstenberg--1981} and since strong recurrence implies uniform recurrence \cite[Thm. 5.9]{Furstenberg--1981}, we obtain that the product of any $k$ points of $([0,1),T_1), \ldots, ([0,1),T_k)$ respectively is uniformly recurrent with respect to the product system $([0,1)^k,T_1\times\cdots\times T_k)$.

Finally, since $\mathcal{R}_1,\ldots,\mathcal{R}_k$ are all rotation words of the same orientation of the intervals, there is no ambiguity in the coding and the dynamical systems results can be translated in terms of words. Therefore, their direct product is uniformly recurrent.  
\end{proof}

\begin{proof}[Alternative proof of the URD part of Proposition~\ref{prop:URD_isnot_SURD}]
Consider a $d$-dimensional rotation word $w$ with parameters $\boldsymbol\alpha, I_1,\ldots, I_k, \rho$. Let $\mathbf{q}\in\N^d$ be a direction and $\mathbf{s}=(s_1,\ldots,s_d)\in\N^d$. For any $\mathbf{p}\in\N^d$, the unidimensional word  $(w(\ell\mathbf{q}+\mathbf{p}))_{\ell\in\N}$ is a rotation word. Indeed, it is the coding of the rotation of the point $\rho+\mathbf{p}\cdot\boldsymbol\alpha$ of the unit circle under the irrational angle $\mathbf{q}\cdot\boldsymbol\alpha$, with respect to the partition into the intervals $I_1,\ldots,I_k$. Therefore, the word $\dir{w}{q}{s}$ is a direct product of $s_1\cdots s_d$ unidimensional rotation words (of the same orientation):
\[
    \dir{w}{q}{s}
    =\bigtimes_{\mathbf{i}\in[\![\mathbf{0},\mathbf{s}-\mathbf{1}]\!]} 
    w(\ell\mathbf{q}+\mathbf{i})_{\ell\in\N}
\]
By Lemma~\ref{lem:direct_prod_sturmian}, we obtain that $\dir{w}{q}{s}$ is uniformly recurrent. This proves that $w$ is URD. 
\end{proof}

\begin{remark}
\label{rem:stronger-than-URD}
We argue that the second proof of Proposition~\ref{prop:URD_isnot_SURD} shows that $d$-dimensional rotation words satisfy a stronger property than URD which is not the SURD property. For any direction $\mathbf{q}$ and any position $\mathbf{p}$, the rotation angle of $(w(\ell\mathbf{q}+\mathbf{p}))_{\ell\in\N}$ is independent of $\mathbf{p}$. Moreover,
for any size $\mathbf{s}$, any direction $\mathbf{q}$ and any position $\mathbf{p}$, we have
\[
    \dir{w}{q}{s}^{(\mathbf{p})}
    =\bigtimes_{\mathbf{i}\in[\![\mathbf{0},\mathbf{s}-\mathbf{1}]\!]} 
    w(\ell\mathbf{q}+\mathbf{i}+\mathbf{p})_{\ell\in\N}.
\]
Thus, for any size $\mathbf{s}$ and any direction $\mathbf{q}$, there exists a constant $b$ such that for any $\mathbf{p}$, each factor of length $b$ of the unidimensional word
$\dir{w}{q}{s}^{(\mathbf{p})}$ contains all factors of size $\mathbf{s}$. Indeed, the constant $b$ only depends on the rotation angles of the words $    w(\ell\mathbf{q}+\mathbf{i}+\mathbf{p})_{\ell\in\N}$, hence is independent of the origin $\mathbf{p}$ (which is stronger than URD), although depends on the direction $\mathbf{q}$ (which is weaker than SURD).
\end{remark}

\begin{remark} In the particular case of rotation words of the same slope $\alpha$, one can directly prove (without using Furstenberg's results) that their direct product is uniformly recurrent, except in some exceptional cases described below. See~\cite{Didier--1998} for similar concerns on rotation words.

The proof goes as follows. Let $\mathcal{R}_1,\ldots,\mathcal{R}_k$ be unidimensional lower (resp.\ upper) rotation word of intercepts $\rho_1,\ldots,\rho_k$ respectively. As in the proof of Proposition~\ref{prop:URD_isnot_SURD}, for each $i$, the factor of length $s$ at position $m$ in $\mathcal{R}_i$ corresponds to the interval of the point $\rho_i+m\alpha$. 

For each $i$, let us shift all the intervals of the $i$-th circle by $\rho_1-\rho_i$. Now the factor at position $m$ of each $\mathcal{R}_i$ corresponds to the (shifted) interval of the point $\rho_i+(\rho_1-\rho_i)+m\alpha = \rho_1+m\alpha$. Consider the intervals created as the intersections of all shifted intervals (we have at most $n_1\cdots n_k$ of them where each $n_i$ is the number of intervals in the interval partition corresponding to $\mathcal{R}_i$). These new intervals correspond to the factors of the product $\mathcal{R}_1\times\cdots\times\mathcal{R}_k$. Namely, the factor at position $m$ of $\mathcal{R}_1\times\cdots\times\mathcal{R}_k$ corresponds to the interval containing the point $\rho_1+m\alpha$. This shows that $\mathcal{R}_1\times\cdots\times\mathcal{R}_k$ is a rotation word. So, it is uniformly recurrent by the three-gap theorem. 

If some words are upper rotational, and some are lower rotational, their direct product might not be uniformly recurrent. This occurs when some intersection of the intervals is a single point, which can only happen in the case when in one of the words the intervals are half-open on the right, and in the other one they are half-open on the left, and the orbit of each point contains this point. On the other hand, if one of the words never touches the intervals endings (which corresponds to an orbit not containing zero), it means that orientation does not play any role for this word and we can assume it is the same as for the other word.
\end{remark}

\section{Fixed points of multidimensional square morphisms}
\label{sec:morphism}

Similarly to unidimensional words, one can define morphisms and their fixed points in any dimension; for example, see~\cite{Charlier--Karki--Rigo--2010,Rigo--Maes--2002,Salon--1987}. For other kinds of multidimensional substitutions, we refer to the survey \cite{PriebeFrank--2008}.
 For simplicity, we only consider constant length morphisms. 
\begin{definition}
A \emph{$d$-dimensional morphism} of constant size $\mathbf{s}=(s_1,\ldots,s_d)\in\N^d$ is a map $\varphi\colon A\to A^{[\![\mathbf{0},\mathbf{s}-\mathbf{1}]\!]}$. For each $a\in A$ and for each integer $n\ge 2$, $\varphi^n(a)$ is recursively defined as 
\[
	\varphi^n(a)\colon 
    [\![\mathbf{0},\mathbf{s}^n-\mathbf{1}]\!]\to A,\ 
    \mathbf{i}\mapsto \Big(\varphi\big((\varphi^{n-1}(a))(\mathbf{q})\big)\Big)(\mathbf{r}),
\]
where $\mathbf{q}$ and $\mathbf{r}$ are defined by the componentwise Euclidean division of $\mathbf{i}$ by $\mathbf{s}$: $\mathbf{i}=\mathbf{q}\mathbf{s}+\mathbf{r}$. With these notation, the \emph{preimage} of the letter $\big(\varphi^n(a)\big)(\mathbf{i})$  is the letter $\big(\varphi^{n-1}(a)\big)(\mathbf{q})$. In the case $\mathbf{s}=(s,\ldots,s)$, we say that $\varphi$ is a \emph{$d$-dimensional square morphism of size $s$}.
\end{definition}

Note that $\varphi^n(a)$ is obtained by concatenating $\prod_{i=1}^d s_i$ copies of the images $\varphi^{n-1}(b)$ for the letters $b$ occurring in $\varphi(a)$. For instance, if $d=2$ and $\mathbf{s}=(s_1,s_2)$, the $n$-th image $\varphi^n(a)$ has size $\mathbf{s}^n=(s_1^n,s_2^n)$ and,  with the convention of Remark~\ref{rem:convention-2D}, we have
\[
    \varphi^n(a)=
	\begin{bmatrix}
	\varphi^{n-1}(\varphi(a)_{0,s_2-1}) & \cdots & \varphi^{n-1}(\varphi(a)_{s_1-1,s_2-1})\\ 
    \vdots & & \vdots \\
    \varphi^{n-1}(\varphi(a)_{0,0}) & \cdots & \varphi^{n-1}(\varphi(a)_{s_1-1,0})
	\end{bmatrix}
\] 
where we have used the lighter notation $\varphi(a)_{i,j}$ instead of $\big(\varphi(a)\big)(i,j)$.

\begin{example}
In Figure~\ref{fig:preimage},
\begin{figure}[htbp]
\centering
\scalebox{0.8}{
\begin{tikzpicture}[overlay,remember picture]
	\tikzstyle{every path}=[draw=gray,line width = 1pt]
	\coordinate (a) at ( $ (pic cs:Z1)  - (0.1,0.1)  $);
	\coordinate (b) at ( $ (pic cs:Z2)  + (0.2,0.3) $);
	\draw[fill=gray!50] (a) rectangle (b);
\end{tikzpicture}
$\begin{array}{c|ccccccccccccccccccccccccccc}
	7&1&0&1& 1&1&0\tikzmark{A2}& 1&0&1& 1&0&1& 1&0&1&   1&1&0& 1&0&1& 1&1&0& 1&0&1\\
	6&1&1&0& \tikzmark{A1}0&1&1& 1&1&0& 1&1&0& 1&1&0&   0&1&1& 1&1&0& 0&1&1& 1&1&0\\
	5&1&0&1& 1&0&1& 1&1&0& 1&1&0\tikzmark{B2} & 1&0&1&  1&0&1& 1&0&1& 1&0&1& 1&1&0\\
	4&1&1&0& 1&1&0& 0&1&1& \tikzmark{B1}0&1&1& 1&1&0&   1&1&0& 1&1&0& 1&1&0& 0&1&1 \\
	3&1&0\tikzmark{A}&1& 1&1&0& 1&0&1\tikzmark{Z2}&  1&0&1& 1&1&0&  1&0&1& 1&0&1& 1&0&1\tikzmark{C2}& 1&1&0\\
	2&1&1&0& 0\tikzmark{B}&1&1& 1&1&0& 1&1&0&  0&1&1&  1&1&0& 1&1&0& \tikzmark{C1}1&1&0& 0&1&1 \\
	1&1&0&1& 1&0&1& 1&1\tikzmark{C}&0&  1&0&1 & 1&0&1& 1&1&0& 1&1&0& 1&0&1& 1&0&1\\
	0&\tikzmark{Z1}1&1&0& 1&1&0& 0&1&1& 1&1&0& 1&1&0&   0&1&1& 0&1&1& 1&1&0& 1&1&0\\\hline
	&0&1&2&  3&4&5& 6&7&8&9&10&11&12&13&14&15&16&17&18&19&20&21&22&23&24&25&26
\end{array}$
\begin{tikzpicture}[overlay,remember picture]
\tikzstyle{every path}=[draw=red,line width = 1pt]
\coordinate (x) at ( $ (pic cs:A)  + (0.12,0.3)$);
\draw (x.south) --++(0,-0.4) --++(-0.4,0)--++(0,0.4)--++(0.4,0)--++(0,-0.4);
\coordinate (a) at ( $ (pic cs:A1)  - (0.1,0.1) $);
\coordinate (b) at ( $ (pic cs:A2) + (0.2,0.4) $);
\draw (a) rectangle (b);
\draw[->] (x) to [bend left = 10]  (a);

\tikzstyle{every path}=[draw=blue,line width = 1pt]
\coordinate (x) at ( $ (pic cs:B)  + (0.12,0.3)$);
\draw (x.south) --++(0,-0.4) --++(-0.4,0)--++(0,0.4)--++(0.4,0)--++(0,-0.4);
\coordinate (a) at ( $ (pic cs:B1)  - (0.1,0.1) $);
\coordinate (b) at ( $ (pic cs:B2) + (0.2,0.4) $);
\draw (a) rectangle (b);
\draw[->] (x) to [bend left = 10]  (a);

\tikzstyle{every path}=[draw=purple,line width = 1pt]
\coordinate (x) at ( $ (pic cs:C)  + (0.12,0.3)$);
\draw (x.south) --++(0,-0.4) --++(-0.4,0)--++(0,0.4)--++(0.4,0)--++(0,-0.4);
\coordinate (a) at ( $ (pic cs:C1)  - (0.1,0.1) $);
\coordinate (b) at ( $ (pic cs:C2) + (0.2,0.4) $);
\draw (a) rectangle (b);
\draw[->] (x) to [bend left = 10]  (a);

\end{tikzpicture}
}
\caption{Third iteration of the morphism} 
\hspace{4.5cm} $0\mapsto\begin{bmatrix}
1 & 1 & 0\\ 0 & 0 & 1
\end{bmatrix},\ 
1\mapsto\begin{bmatrix}
1 & 0 & 1\\ 1 & 1 & 0
\end{bmatrix}$ 
starting from $1$.
\label{fig:preimage}
\end{figure}
the third iteration of a bidimensional morphism $\varphi$ of size $\mathbf{s}=(3,2)$ is given. The gray zone corresponds to $\varphi^2(1)$. The preimages of different letters is highlighted in colors. For instance, the preimage of $\varphi^3(1)_{4,7}$ (in red) is the letter $\varphi^2(1)_{1,3}$ as $(4,7)=(1,3)\mathbf{s}+(1,1)$ (where the product and sum are understood componentwise). Note that it is also the preimage of $\varphi^3(1)_{3,6},\varphi^3(1)_{3,7}$ and $\varphi^3(1)_{5,7}$ for example.
\end{example}

\begin{definition}
Let $\varphi$ be a $d$-dimensional morphism such that there exists $a\in A$ with $\varphi(a)_{0,0}=a$. We say that $\varphi$ is \emph{prolongable} on $a$ and the limit $\lim_{n\to\infty}\varphi^n(a)$ is well defined. The limit $d$-dimensional infinite word so obtained is called the \emph{fixed point} of $\varphi$ beginning with $a$ and it is denoted by $\varphi^\omega(a)$. A $d$-dimensional infinite word is said to be \emph{pure morphic} if it is the fixed point of a $d$-dimensional morphism.
\end{definition}

\begin{example}
Figure~\ref{fig:sierpinski} 
\begin{figure}[htb]
\centering
\scalebox{0.7}{
\includegraphics[width=0.02\textwidth]{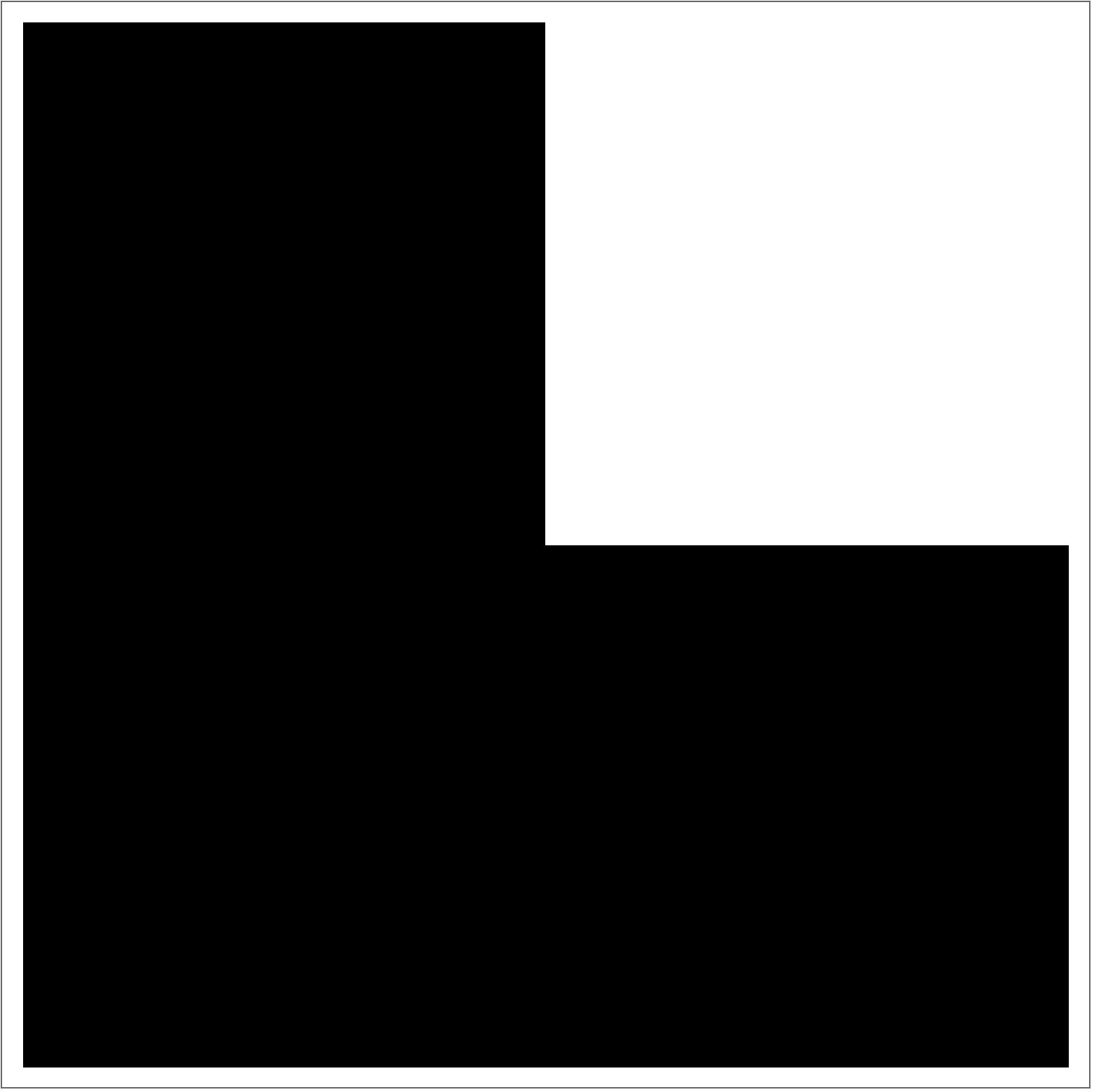}\quad
\includegraphics[width=0.04\textwidth]{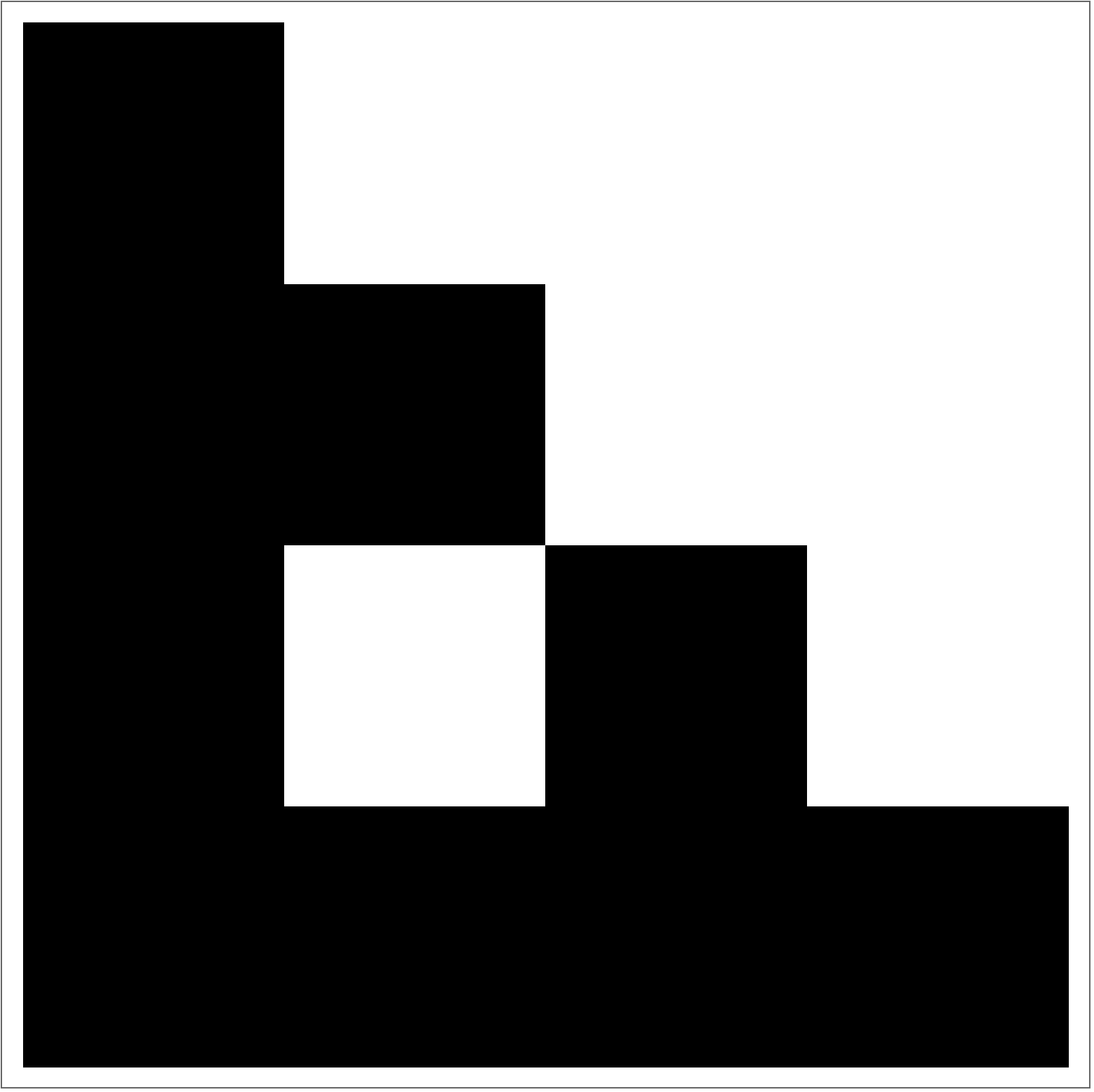}\quad
\includegraphics[width=0.08\textwidth]{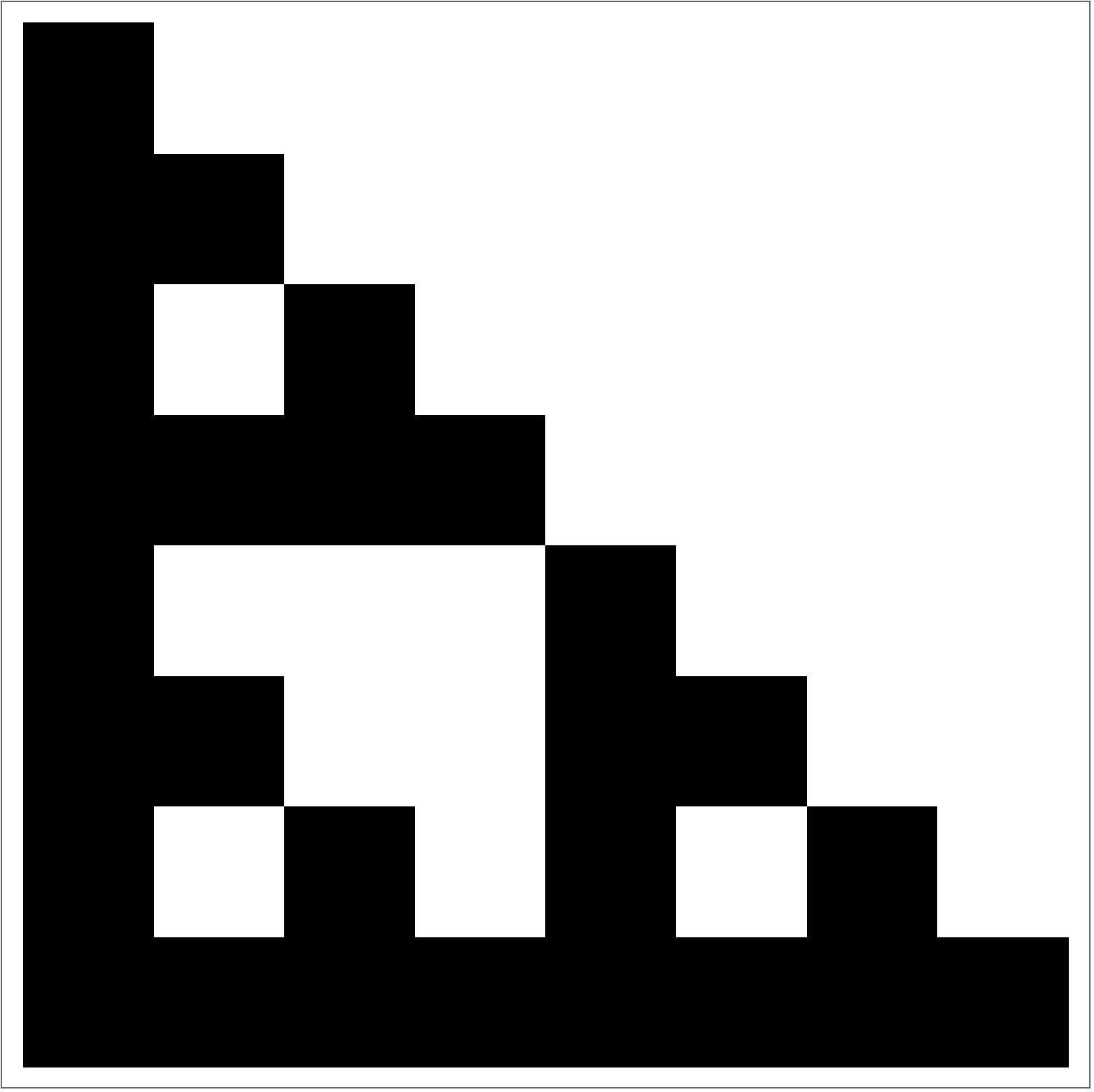}\quad
\includegraphics[width=0.16\textwidth]{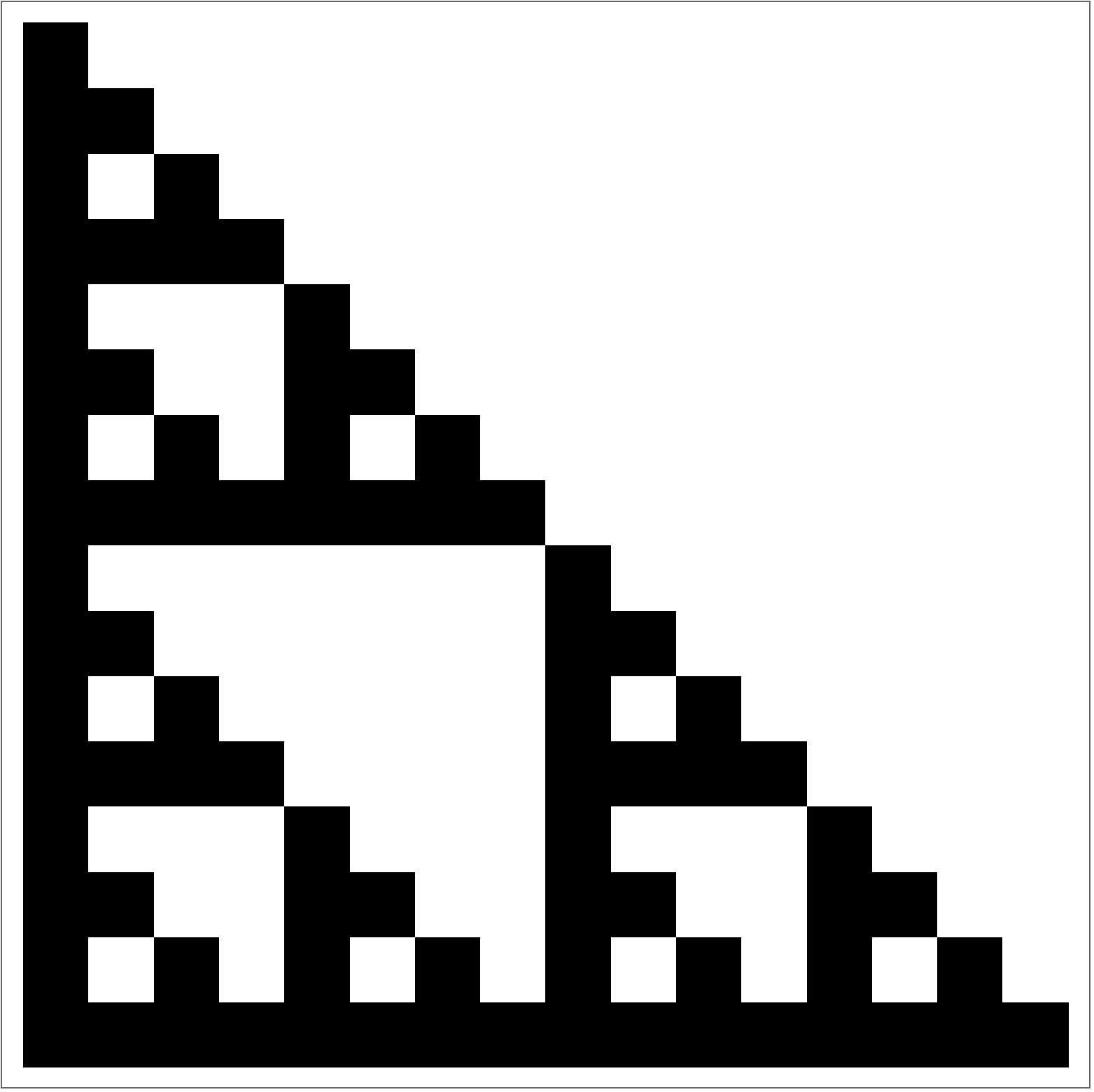}\quad
\includegraphics[width=0.32\textwidth]{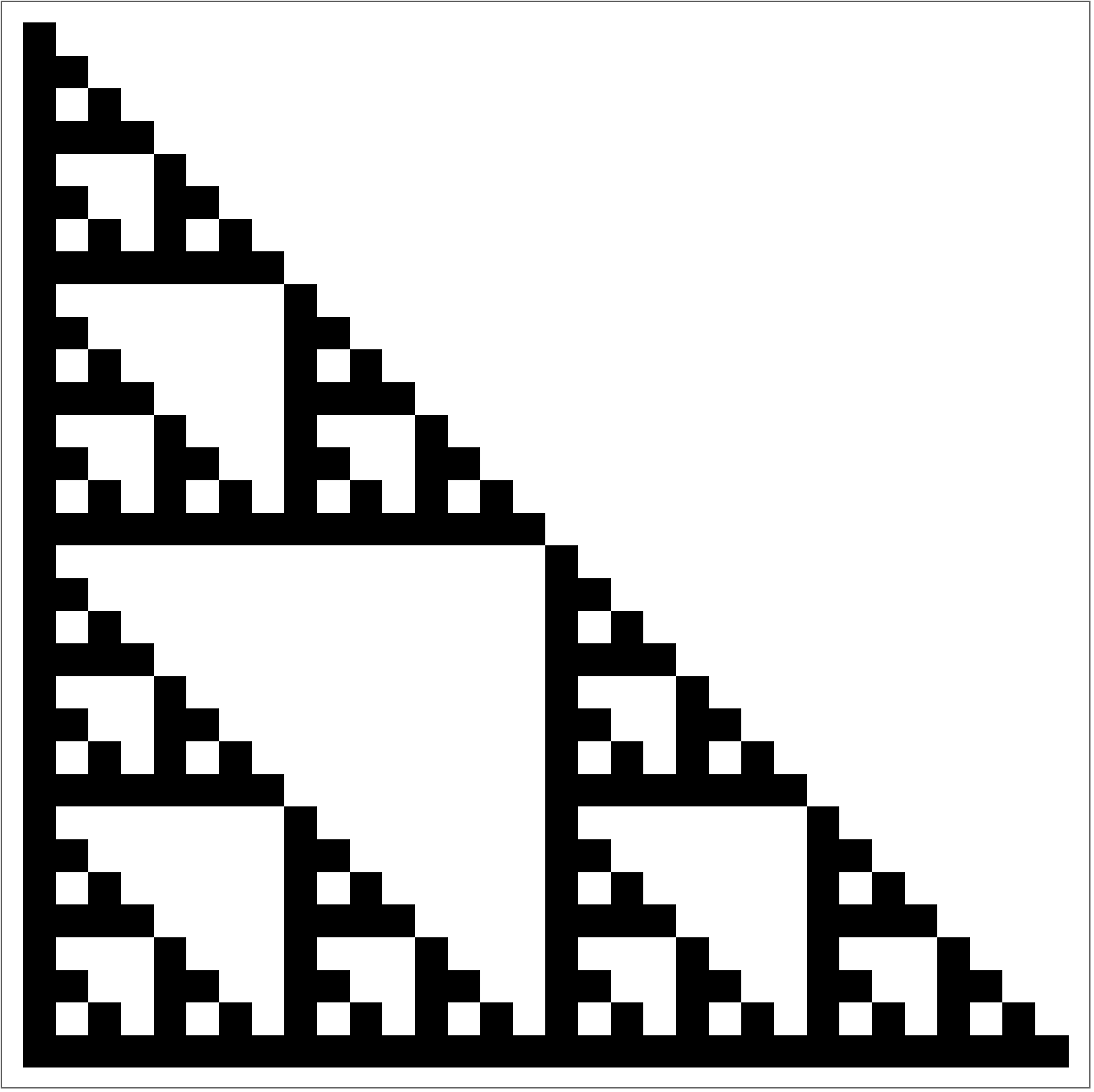}
}
\caption{The first five iterations of the 2D morphism.}
\hspace{1.5cm}$0\mapsto\begin{bmatrix}
0 & 0\\ 0 & 0
\end{bmatrix}$, 
$1\mapsto\begin{bmatrix}
1 & 0\\ 1 & 1
\end{bmatrix}$ starting from $1$.
\label{fig:sierpinski}
\end{figure}
depicts the first five iterations of a bidimensional square morphism with the convention that a black (resp.\ white) cell represents the letter 1 (resp.\ 0). The limit object of this process is the famous Sierpinski gasket \cite{Stewart--1995}.
\end{example}

A first interesting observation is that in order to study the uniform recurrence along all directions (URD) of $d$-dimensional infinite words of the form $\varphi^\omega(a)$ for a square morphism $\varphi$, we only have to consider the distances between consecutive occurrences of the letter $a$. 

\begin{proposition}\label{prop:reduction}
Let $w$ be a fixed point of a $d$-dimensional square morphism of size $s$ and let $\mathbf{q}$ be a direction. If there exists $b\in\N$ such that $w(\mathbf{0})$ occurs infinitely often along $\mathbf{q}$ with gaps at most $b$, then for all $\mathbf{m}\in\N^d$, the prefix of size $\mathbf{m}$ of $w$ occurs infinitely often along $\mathbf{q}$
with gaps at most $s^{\lceil \log_s( \max\mathbf{m})\rceil} b$.
\end{proposition}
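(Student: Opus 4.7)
The plan is to exploit the self-similarity of $w$ as a fixed point of the iterated morphism $\varphi^N$, where $N=\lceil\log_s(\max\mathbf{m})\rceil$. This choice ensures $s^N\geq m_i$ for each $i$, so the prefix $p$ of $w$ of size $\mathbf{m}$ fits inside the prefix of $w$ of size $(s^N,\ldots,s^N)$, which is $\varphi^N(a)$ with $a=w(\mathbf{0})$. Hence $p$ is precisely the prefix of size $\mathbf{m}$ of $\varphi^N(a)$.

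The key step is the self-similarity identity: for every $\mathbf{k}\in\N^d$, the rectangular factor of $w$ of size $(s^N,\ldots,s^N)$ occurring at position $s^N\mathbf{k}$ is exactly $\varphi^N(w(\mathbf{k}))$. This is a direct consequence of the recursive definition of $\varphi^n$ together with the fact that $w$ is a fixed point of $\varphi^N$. Applied with $\mathbf{k}=\ell'\mathbf{q}$, it says that whenever $w(\ell'\mathbf{q})=a$, the block $\varphi^N(a)$, and therefore its prefix $p$, occurs in $w$ at position $s^N\ell'\mathbf{q}$. Thus every occurrence of $a$ along $\mathbf{q}$ at parameter $\ell'$ produces an occurrence of $p$ along $\mathbf{q}$ at parameter $s^N\ell'$.

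To conclude, let $L=\{\ell'\in\N : w(\ell'\mathbf{q})=a\}$ and $M=\{\ell\in\N : p\text{ occurs at position }\ell\mathbf{q}\}$. By hypothesis, $0\in L$ and consecutive elements of $L$ differ by at most $b$, so the scaled set $s^NL$ contains $0$, has consecutive gaps at most $s^Nb$, and satisfies $s^NL\subseteq M$. A standard superset argument then forces $M$ itself to have gaps at most $s^Nb$: for consecutive $m_1<m_2$ in $M$, the smallest element of $s^NL$ strictly greater than $m_1$ lies within $s^Nb$ of $m_1$ and already belongs to $M$, which forces $m_2-m_1\leq s^Nb$. The only non-routine ingredient is the self-similarity identity of the second paragraph, which is a straightforward unwinding of the componentwise Euclidean division defining $\varphi^n$, so I do not expect any real obstacle.
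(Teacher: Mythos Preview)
Your proposal is correct and follows essentially the same approach as the paper: both arguments set $N=\lceil\log_s(\max\mathbf{m})\rceil$ and use the self-similarity of the fixed point to map each occurrence of $a=w(\mathbf{0})$ at position $\ell'\mathbf{q}$ to an occurrence of $\varphi^N(a)$ (hence of the prefix $p$) at position $s^N\ell'\mathbf{q}$, so that gaps scale by $s^N$. The paper's proof is terser, simply asserting the conclusion after the scaling observation, whereas you spell out the superset argument for $M\supseteq s^N L$ explicitly; the content is the same.
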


\begin{proof}
Let $\mathbf{m}\in\N^d$ and let $p$ be the prefix of size $\mathbf{m}$ of $w$. Let $r$ be the integer defined by $s^{r-1}<\max\mathbf{m}\le s^r$. In the $d$-dimensional infinite word $w$, if the letter $w(\mathbf{0})$ occurs at position $\mathbf{i}$, then the image $\varphi^r(w(\mathbf{0}))$ occurs at position $s^r\mathbf{i}$. Therefore and because we consider a square morphism, if $w(\mathbf{0})$ occurs infinitely often along $\mathbf{q}$ with gaps at most $b$, then $p$ occurs infinitely often along $\mathbf{q}$ with gaps at most $s^r b$.
\end{proof}

In order to provide a family of SURD $d$-dimensional infinite words, we introduce the following definition.

\begin{definition}
For an integer $s\ge2$ and $\mathbf{i}=(i_1,\ldots,i_d)\in(\Z/s\Z)^d$ such that $i_1,\ldots,i_d$ are coprime, we define $\langle \mathbf{i}\rangle$ to be the additive subgroup of $(\Z/s\Z)^d$ that is generated by $\mathbf{i}$:
\[
    \langle \mathbf{i}\rangle = \{ k\mathbf{i}\colon k\in \Z/s\Z\}.
\]
Then, we let $\mathcal{C}(s)$ be the family of all cyclic subgroups of $(\Z/s\Z)^d$ generated by elements with $\gcd(\mathbf{i})=1$:
\[
    \mathcal{C}(s)=\{\langle \mathbf{i}\rangle\colon \mathbf{i}\in(\Z/s\Z)^d, \ \gcd(\mathbf{i})=1\}.
\]
\end{definition}

\begin{proposition}
\label{proposition:main morphic}
If $\varphi$ is a $d$-dimensional square morphism of size $s$ prolongable on $a\in A$ and such that, for every $C\in\mathcal{C}(s)$, there exists $\mathbf{i}\in C$ such that $\varphi(b)_\mathbf{i}=a$  for each $b\in A$, then its fixed point $\varphi^\omega(a)$ is SURD. More precisely, for each $\mathbf{m}\in\N^d$, the prefix of size $\mathbf{m}$ of $\varphi^\omega(a)$ occurs infinitely often along any direction with gaps at most $s^{\lceil \log_s( \max\mathbf{m})\rceil+1}$.
\end{proposition}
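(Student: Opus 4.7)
The plan is to combine Proposition~\ref{prop:reduction} with a modular analysis of when the letter $a=\varphi^\omega(a)(\mathbf{0})$ can appear along a given direction. Applied with $b=s$, Proposition~\ref{prop:reduction} reduces the statement to proving that $a$ occurs along every direction $\mathbf{q}\in\N^d$ with gaps at most $s$: the quantitative bound $s^{\lceil\log_s(\max\mathbf{m})\rceil+1}$ for arbitrary prefixes of size $\mathbf{m}$ then follows immediately, and as it does not depend on $\mathbf{q}$, it witnesses the SURD property.

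Fix then a direction $\mathbf{q}=(q_1,\ldots,q_d)$ and let $\bar{\mathbf{q}}$ be its image in $(\Z/s\Z)^d$. The first step is to verify that the cyclic subgroup $C=\langle\bar{\mathbf{q}}\rangle$ lies in $\mathcal{C}(s)$. Working with representatives in $[\![0,s-1]\!]$, the coprimality of $q_1,\ldots,q_d$ gives $\gcd(\bar q_1,\ldots,\bar q_d,s)=1$, so $\bar{\mathbf{q}}$ has order $s$ in $(\Z/s\Z)^d$. Setting $g=\gcd(\bar q_1,\ldots,\bar q_d)$, one has $\gcd(g,s)=1$, so $g$ is invertible modulo $s$ and the element $g^{-1}\bar{\mathbf{q}}\in C$ admits representatives $(\bar q_1/g,\ldots,\bar q_d/g)$ of integer gcd $1$. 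Hence $C\in\mathcal{C}(s)$, and the hypothesis of the proposition furnishes some $\mathbf{i}\in C$ with $\varphi(b)_{\mathbf{i}}=a$ for every $b\in A$.

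Since $\mathbf{i}\in C$ and $\bar{\mathbf{q}}$ has order $s$, there is a unique $k\in[\![0,s-1]\!]$ with $k\bar{\mathbf{q}}=\mathbf{i}$ in $(\Z/s\Z)^d$. I next check that $k\mathbf{q}\ge\mathbf{i}$ componentwise: for each $j$, either $q_j=0$ and then $\bar q_j=0$ forces $i_j=0$, or $q_j\ge 1$ and then $kq_j$ and $i_j$ are non-negative integers congruent modulo $s$, with the two subcases $kq_j<s$ (forcing $kq_j=i_j$) and $kq_j\ge s>i_j$ both giving $kq_j\ge i_j$. Consequently, for every $\ell\in\N$ with $\ell\equiv k\pmod s$, one can write $\ell\mathbf{q}=s\mathbf{p}+\mathbf{i}$ for some $\mathbf{p}\in\N^d$, and the fixed-point equation yields
\[
  \varphi^\omega(a)(\ell\mathbf{q})=\varphi(\varphi^\omega(a)(\mathbf{p}))_{\mathbf{i}}=a.
\]
As every window of $s$ consecutive non-negative integers meets the residue class of $k$ modulo $s$, the letter $a$ appears along $\mathbf{q}$ with gaps at most $s$, as required.

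The main obstacle I anticipate is the verification that $\langle\bar{\mathbf{q}}\rangle\in\mathcal{C}(s)$: the naive guess that $\bar{\mathbf{q}}$ itself has coprime integer representatives can fail (for example, $\mathbf{q}=(6,11)$ reduced modulo $s=9$ yields $(6,2)$, of gcd $2$). Renormalizing by $g^{-1}$ rescues the argument, and it is available only because $g$ is coprime to $s$, a fact ultimately rooted in $\gcd(q_1,\ldots,q_d)=1$. Once this cyclic-subgroup point is in place, the remainder is a routine application of the fixed-point equation and Proposition~\ref{prop:reduction}.
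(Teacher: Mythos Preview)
Your proof is correct and follows essentially the same route as the paper's: reduce via Proposition~\ref{prop:reduction} to showing the letter $a$ recurs along every direction with gap $s$, then use the coprimality of the residues $\bar{\mathbf{q}}$ with $s$ to place $\langle\bar{\mathbf{q}}\rangle$ in $\mathcal{C}(s)$ and extract a residue class $k$ of indices $\ell$ with $\ell\mathbf{q}\equiv\mathbf{i}\pmod s$. Your treatment is in fact slightly more careful than the paper's in that you explicitly verify $k\mathbf{q}\ge\mathbf{i}$ componentwise (so that the quotient $\mathbf{p}$ lies in $\N^d$), a point the paper leaves implicit.
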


\begin{proof}
Let $\mathbf{q}\in\N^d$ be a given direction. Let $\mathbf{r}=\mathbf{q}\bmod s$ (componentwise) and $d=\gcd(\mathbf{r})$. By hypothesis, there exists $\mathbf{i}\in\langle\frac 1d\mathbf{r}\rangle$ such that $(\varphi(b))_\mathbf{i}=a$ for each $b\in A$. Let $k\in[\![0,s-1]\!]$ such that $\mathbf{i}=\frac kd\mathbf{r} \bmod s$. Then $k\mathbf{q}\equiv k\mathbf{r}\equiv d\mathbf{i}\pmod s$. Observe that $\gcd(d,s)$ divides $\mathbf{r}$ and $s$, hence also divides $\mathbf{q}$. This implies that $\gcd(d,s)=1$. Let $\ell=d^{-1}k\bmod s$. Then $\ell \mathbf{q}\equiv \mathbf{i}\pmod s$. We obtain that for all $n\in\N$, $(\ell +ns)\mathbf{q}\equiv \mathbf{i}\pmod s$, hence  $(\varphi^\omega(a))_{(\ell+ns)\mathbf{q}}=a$. This proves that the letter $a$ occurs infinitely often in $\varphi^\omega(a)$ along the direction $\mathbf{q}$ with gaps at most $s$.

Now let $\mathbf{m}\in\N^d$ and consider the prefix $p$ of size $\mathbf{m}$ of $\varphi^\omega(a)$. From the first part of the proof and by using Proposition~\ref{prop:reduction}, we obtain that $p$ occurs infinitely often along any direction with gaps at most $s^{\lceil \log_s( \max\mathbf{m})\rceil+1}$.
\end{proof}

Since each subgroup of $(\Z/s\Z)^d$ contains $\mathbf{0}$, the following result is immediate.

\begin{corollary}\label{cor:1}
Let $\varphi$ be a $d$-dimensional square morphism of size $s$ such that $(\varphi(b))_\mathbf{0}=a$ for each $b\in A$. Then the fixed point $\varphi^\omega(a)$ is SURD. More precisely, for each  $\mathbf{m}\in\N^d$, the prefix of size $\mathbf{m}$ of $\varphi^\omega(a)$ occurs infinitely often along any direction with gaps at most $s^{\lceil \log_s( \max\mathbf{m})\rceil+1}$.
\end{corollary}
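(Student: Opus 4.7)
The plan is to derive Corollary~\ref{cor:1} as a direct specialization of Proposition~\ref{proposition:main morphic}. The key observation is that every additive subgroup of $(\Z/s\Z)^d$ contains the identity element $\mathbf{0}$, and in particular every cyclic subgroup $C \in \mathcal{C}(s)$ satisfies $\mathbf{0} \in C$.

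Given this, I would verify that the hypothesis of Proposition~\ref{proposition:main morphic} is automatically met under the stronger assumption of the corollary: for every $C \in \mathcal{C}(s)$, I simply pick $\mathbf{i} = \mathbf{0} \in C$, and then by assumption $\varphi(b)_\mathbf{i} = \varphi(b)_\mathbf{0} = a$ for every $b \in A$. Hence the hypothesis ``for every $C\in\mathcal{C}(s)$, there exists $\mathbf{i}\in C$ such that $\varphi(b)_\mathbf{i}=a$ for each $b\in A$'' holds, and Proposition~\ref{proposition:main morphic} applies verbatim.

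Invoking Proposition~\ref{proposition:main morphic} then yields both conclusions of the corollary: $\varphi^\omega(a)$ is SURD, and for each $\mathbf{m} \in \N^d$ the prefix of size $\mathbf{m}$ of $\varphi^\omega(a)$ occurs along any direction with gaps bounded by $s^{\lceil \log_s(\max\mathbf{m})\rceil+1}$. Since the bound depends only on $\mathbf{m}$ and not on the direction, this is precisely the SURD condition, so no further work is needed. There is no real obstacle to this derivation; it is a one-line corollary whose sole content is noticing the trivial containment $\mathbf{0} \in C$ for every subgroup $C$.
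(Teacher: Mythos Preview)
Your proposal is correct and matches the paper's own proof exactly: the paper states that since each subgroup of $(\Z/s\Z)^d$ contains $\mathbf{0}$, the result is immediate from Proposition~\ref{proposition:main morphic}.
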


When the alphabet $A$ is binary (in which case we assume without loss of generality that $A=\{0,1\}$), then we talk about \emph{binary} morphism and we always consider that it has a fixed point beginning with $1$.

\begin{example}
By Corollary~\ref{cor:1}, the fixed point $\varphi^\omega(1)$ of
\[
\varphi\colon
0\mapsto\begin{bmatrix}
0 & 0\\ 1 & 0
\end{bmatrix},\quad 
1\mapsto\begin{bmatrix}
0 & 1\\ 1 & 0
\end{bmatrix}
\]
is SURD: for all $(m,n)\in\N^2$, the prefix of size $(m,n)$ of $\varphi^\omega(1)$ occurs infinitely often along any direction with gaps at most $2^{\lceil \log_s(\max(m,n))\rceil+1}$.
\end{example}

\begin{remark}
When the size $s$ is prime, the subgroups of $\mathcal{C}(s)$ corresponding to any two elements $\mathbf{i}$ and $ \mathbf{j}$ with coprime coordinates either coincide or have only the element $\mathbf{0}$ in common. Therefore we have exactly $\frac{s^d-1}{s-1}$ distinct subgroups. In particular, for $d=2$, this gives $s+1$ distinct subgroups. Hence we can consider a partition of $(\Z/s\Z)^d$ into $s+2$ sets: $s+1$ subgroups without $\mathbf{0}$ and $\mathbf{0}$ itself.  When $s$ is not prime, the structure is a bit more complicated and we do not have such a nice partition. Below we consider
examples to illustrate the two situations.
\end{remark}

\begin{example}
Partition for $s=5$ and $d=2$ can be illustrated by the following picture where each letter in $\{\alpha,\ldots, \zeta\}$ represents a subgroup:
\[
	\begin{bmatrix}
	\beta & \zeta & \epsilon & \delta & \gamma \\
	\beta & \delta & \zeta & \gamma & \epsilon \\
	\beta & \epsilon & \gamma & \zeta & \delta \\
	\beta & \gamma & \delta & \epsilon & \zeta \\
	0 & \alpha & \alpha & \alpha & \alpha
    \end{bmatrix}
\]
Due to Proposition~\ref{proposition:main morphic}, in order to obtain a SURD fixed point of a bidimensional square morphism, it is enough to have the letter $a$ in one of the coordinates marked by each Greek letter in the image of each letter $b\in A$. And by Corollary~\ref{cor:1}, having the letter $a$ in the coordinate $(0,0)$ in the image of each letter is enough.
\end{example}

\begin{example}
For $s=6$ and $d=2$, one has 12 subgroups (which can be checked by considering the 36 possible cases of pairs of remainders of the Euclidean division by $6$, out of which there are only 21 coprime pairs to consider): 
\[
\begin{bmatrix}[c|c|c|c|c|c]
\beta 			& \kappa & \theta 		& \zeta 		& \delta 		& \gamma \\ 
\hline 
\beta,\zeta,\lambda 		& \iota & \delta,\kappa,\epsilon		& \lambda 		& \gamma,\theta,\iota 	& \epsilon \\ \hline 
\beta,\delta,\theta,\mu 	& \eta & \mu 		& \gamma,\zeta,\kappa,\eta 	& \mu 		& \eta \\ 
\hline 
\beta,\zeta,\lambda 		& \epsilon & \gamma,\theta,\iota 	& \lambda 		& \delta,\epsilon,\kappa 	& \iota \\ \hline 
\beta 			& \gamma & \delta 		& \zeta 		& \theta 		& \kappa \\ \hline
0 			& \alpha & \alpha,\eta,\mu	& \alpha,\lambda,\iota,\epsilon	& \alpha,\eta,\mu	& \alpha
\end{bmatrix}
\]
Here are the correspondence between the 12 subgroups and letters (where we do not write $(0,0)$, which belongs to every subgroup):
\[
\begin{tabular}{c|l|l}
$\alpha$	& $(1,0)$
& $\{(1,0),(2,0),(3,0),(4,0),(5,0)\}$\\
$\beta$	& $(0,1)$
& $\{(0,1),(0,2),(0,3),(0,4),(0,5)\}$\\
$\gamma$	& $(1,1)$ 
& $\{(1,1),(2,2),(3,3),(4,4),(5,5)\}$\\
$\delta$	& $(2,1),(4,5)$ 
& $\{(2,1),(4,2),(0,3),(2,4),(4,5)\}$\\
$\epsilon$	& $(1,2),(5,4)$
& $\{(1,2),(2,4),(3,0),(4,2),(5,4)\}$\\
$\zeta$	& $(3,1),(3,5)$
& $\{(3,1),(0,2),(3,3),(0,4),(3,5)\}$\\
$\eta$	& $(1,3),(5,3)$
& $\{(1,3),(2,0),(3,3),(4,0),(5,3)\}$\\
$\theta$	& $(4,1),(2,5)$
& $\{(4,1),(2,2),(0,3),(4,4),(2,5)\}$\\
$\iota$	& $(1,4),(5,2)$
& $\{(1,4),(2,2),(3,0),(4,4),(5,2)\}$\\
$\kappa$	& $(5,1),(1,5)$
& $\{(5,1),(4,2),(3,3),(2,4),(1,5)\}$\\
$\lambda$	& $(3,4),(3,2)$
& $\{(3,4),(0,2),(3,0),(0,4),(3,2)\}$\\
$\mu$ & $(4,3),(2,3)$
& $\{(4,3),(2,0),(0,3),(4,0),(2,3)\}$
\end{tabular}
\]
We remark that here the subgroups intersect. For example, the first and third subgroups have the element $(3,3)$ in common. Due to Proposition~\ref{proposition:main morphic}, in order to obtain a SURD word, it suffices to have the letter $a$ in the image of each letter in at least one of the elements of each subgroup. For example, it is the case of the fixed point of any morphism with $a$'s in the marked positions in the images of each letter: 
\[
\begin{bmatrix}
* & * & * & * & * & *\\
a & * & a & * & * & *\\
* & * & * & a & * & *\\
* & * & a & * & * & *\\
* & * & * & * & * & *\\
* & * & * & * & a & *\\
\end{bmatrix}
\]
\end{example}

\begin{corollary}\label{cor:morphism power}
If $\psi$ is a $d$-dimensional square morphism of size $s$ such that for some integer $i$, its power $\varphi=\psi^i$ satisfies the conditions of Proposition~\ref{proposition:main morphic}, then the fixed point $\psi^\omega(a)$ is SURD. More precisely, for all $\mathbf{m}\in\N^d$, the prefix of size $\mathbf{m}$ of $\psi^\omega(a)$ occurs infinitely often along any direction with gaps at most $s^{i\lceil \log( \max\mathbf{m})\rceil+i}$.
\end{corollary}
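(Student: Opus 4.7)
The plan is to reduce Corollary~\ref{cor:morphism power} directly to Proposition~\ref{proposition:main morphic} by observing that iterating a morphism does not change its fixed point. Since $\psi$ is prolongable on $a$, the letter at position $\mathbf{0}$ of $\psi^i(a)$ is still $a$, so $\varphi=\psi^i$ is also prolongable on $a$, and in fact
\[
    \psi^\omega(a)=\lim_{n\to\infty}\psi^n(a)=\lim_{n\to\infty}\psi^{in}(a)=\varphi^\omega(a).
\]
Hence the SURD property of $\psi^\omega(a)$ is equivalent to the SURD property of $\varphi^\omega(a)$.

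Next, I would carefully record the size of $\varphi$: the $i$-th iterate of a $d$-dimensional square morphism of size $s$ is itself a $d$-dimensional square morphism, but of size $s^i$ (each coordinate of the image side length gets multiplied by $s$ at every iteration). Since by hypothesis $\varphi$ meets the conditions of Proposition~\ref{proposition:main morphic}, that proposition applied to $\varphi$ (with its proper size $s^i$) yields that for every $\mathbf{m}\in\N^d$, the prefix of size $\mathbf{m}$ of $\varphi^\omega(a)$ occurs infinitely often along any direction with gaps at most
\[
    (s^i)^{\lceil \log_{s^i}(\max\mathbf{m})\rceil+1}.
\]
The last step is the elementary simplification
\[
    (s^i)^{\lceil \log_{s^i}(\max\mathbf{m})\rceil+1}
    = s^{\,i\lceil \log_{s^i}(\max\mathbf{m})\rceil+i},
\]
which matches the stated bound. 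Combined with the identification $\psi^\omega(a)=\varphi^\omega(a)$, this gives the claim.

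There is essentially no technical obstacle: the only points to check are the two bookkeeping facts that (i) the two fixed points coincide and (ii) the morphism $\varphi$ really has square size $s^i$ so that Proposition~\ref{proposition:main morphic} applies with the correct parameter. Once these are in place, the bound transfers verbatim. I would present the proof in a few lines, stating the identification of the fixed points, applying Proposition~\ref{proposition:main morphic}, and rewriting the exponent.
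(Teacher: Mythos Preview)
Your argument is correct and follows exactly the paper's own proof: identify $\psi^\omega(a)=\varphi^\omega(a)$ and apply Proposition~\ref{proposition:main morphic} to the square morphism $\varphi=\psi^i$ of size $s^i$. The paper's version is terser (two sentences), but the content is the same.
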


\begin{proof} 
Clearly, the fixed points of $\psi$ and $\varphi$ are the same. Now apply Proposition~\ref{proposition:main morphic} to $\varphi$.
\end{proof}

\begin{example} 
The morphism 
\[
\psi\colon 0\mapsto 
\begin{bmatrix}
0 & 0 & 0\\
1 & 1 & 1\\
0 & 1 & 0
\end{bmatrix}, \quad 
1\mapsto
\begin{bmatrix}
0 & 1 & 0\\
1 & 0 & 1\\
1 & 1 & 0 
\end{bmatrix}
\]
satisfies the hypotheses of Corollary~\ref{cor:morphism power} for $s=3$, $i=2$. Indeed, it can be checked that for each $C\in\mathcal{C} (9)$, we can find a 1 at the same position in $C$ in both images $\psi^2(0)$ and $\psi^2(1)$.
\end{example}

\begin{remark}
\label{rem:primitivity}
The hypotheses of Proposition~\ref{proposition:main morphic} should be compared to the primitivity property of a morphism. In the unidimensional case, a morphism is said to be primitive if its incidence matrix is primitive, or equivalently, if some power of the morphism is such that all letters appear in the image of every letter; see for example \cite{Durand--1998}. It is well known that fixed points of primitive morphisms are uniformly recurrent. This notion of primitivity generalizes naturally to any dimension $d$. However, if we are interested in studying the URD property, the natural generalization of primitivity is not accurate: we should not only consider the number of times a letter occurs in the image of another letter but also the positions where the letter occurs within each image. See Section~\ref{sec:perspectives} for some perspectives in this direction.
\end{remark}

Now we give a family of examples of SURD $d$-dimensional words which do not satisfy the hypotheses of Corollary~\ref{cor:morphism power}, showing that it does not give a necessary condition. We first need the following observation on unidimensional fixed points of morphisms.

\begin{lemma}\label{lem:001-101}
Let $\varphi$ be a unidimensional morphism of constant prime size $s$ and prolongable on $a\in A$ for which there exists $i\in [\![0,s-1]\!]$ such that $\varphi(b)_i=a$ for each $b\in A$. For all positive integers $m$, any factor of length $s$ of the infinite word $(\varphi^\omega(a)_{mk})_{k\in\N}$ contains the letter $a$.
\end{lemma}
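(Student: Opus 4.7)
The plan is to induct on the $s$-adic valuation $v_s(m)$ of $m$. Fix an arbitrary starting index $k_0\in\N$ and let $n_j = m(k_0+j)$ for $j\in[\![0,s-1]\!]$; the goal is to show that $\varphi^\omega(a)_{n_j}=a$ for some $j$. The key structural observation I use throughout is that for any position $n\in\N$, if we write $n=sq+r$ with $r\in[\![0,s-1]\!]$, then $\varphi^\omega(a)_n = \varphi(\varphi^\omega(a)_q)_r$; in particular, whenever $r=i$ the letter is $a$ regardless of $q$, by the standing hypothesis on $\varphi$.

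For the base case $v_s(m)=0$, i.e.\ $\gcd(m,s)=1$ (which uses primality of $s$), multiplication by $m$ is a bijection on $\Z/s\Z$, so the residues $n_0,n_1,\ldots,n_{s-1}\pmod s$ form a complete residue system. In particular there is some $j^*$ with $n_{j^*}\equiv i\pmod s$, and the observation above gives $\varphi^\omega(a)_{n_{j^*}}=a$.

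For the inductive step $v_s(m)\ge 1$, write $m=sm'$, so $v_s(m')=v_s(m)-1$. Every position $n_j=sm'(k_0+j)$ satisfies $n_j\equiv 0\pmod s$, hence
\[
    \varphi^\omega(a)_{n_j}=\varphi\bigl(\varphi^\omega(a)_{m'(k_0+j)}\bigr)_0.
\]
By the induction hypothesis applied to $m'$, the factor of length $s$ of $(\varphi^\omega(a)_{m'k})_{k\in\N}$ starting at index $k_0$ contains the letter $a$: there exists $j^*\in[\![0,s-1]\!]$ with $\varphi^\omega(a)_{m'(k_0+j^*)}=a$. Since $\varphi$ is prolongable on $a$ we have $\varphi(a)_0=a$, and therefore $\varphi^\omega(a)_{n_{j^*}}=a$, finishing the induction.

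The only delicate point is that a naive single-shot argument (just look at residues modulo $s$) fails precisely when $s\mid m$, because then all sampled positions collapse onto a single residue class modulo $s$ and the hypothesis on $i$ gives no information directly; the induction on $v_s(m)$ together with prolongability on $a$ is exactly what resolves this obstacle. Primality of $s$ enters crucially in the base case to guarantee that $\gcd(m,s)=1$ forces $m$ to be a unit modulo $s$.
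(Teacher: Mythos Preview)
Your proof is correct and follows essentially the same approach as the paper's: both induct on the $s$-adic valuation of $m$ (the paper writes $m=s^e\ell$ with $s\nmid\ell$ and inducts on $e$), handle the base case via the complete residue system argument, and in the inductive step use prolongability $\varphi(a)_0=a$ to lift an occurrence of $a$ from the preimage word $(\varphi^\omega(a)_{(m/s)k})_{k\in\N}$. Your explicit formulation of the structural identity $\varphi^\omega(a)_{sq+r}=\varphi(\varphi^\omega(a)_q)_r$ is exactly what the paper calls ``preimage''.
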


\begin{proof}
Let $w=\varphi^\omega(a)$ and let $m$ be a positive integer. The integer $m$ can be decomposed in a unique way as $m=s^e\ell$ with $e,\ell\in\N$ and $\ell\not\equiv 0\pmod s$. We prove the result by induction on $e\in\N$. If $e=0$ then $m\not\equiv 0\pmod s$. Then for all $k\in\N$, at least one of the $s$ integers $m k$, $m (k+1),\ldots,m (k+s-1)$ is congruent to $i$ modulo $s$. Since the letter $a$ appears in the $i$-th place of the images of all letters, at least one of the letters $w_{m k}$, $w_{m (k+1)},\ldots,w_{m (k+s-1)}$ is equal to $a$.
Now suppose that $e>0$ and that the result is correct for $e-1$. Observe that, for every $k\in\N$, the preimage of the letter $w_{mk}=w_{s^e\ell k}$ is the letter $w_{\frac ms k}=w_{s^{e-1}\ell k}$. Since the morphism is prolongable on $a$ and since $m\equiv 0\pmod s$, for each $k\in\N$, the letter $w_{mk}$ is equal to $a$ if its preimage is $a$. But by induction hypothesis, for all $k\in\N$, at least one of the $s$ preimages $w_{\frac ms k}$, $w_{\frac ms (k+1)},\ldots,w_{\frac ms (k+s-1)}$ is equal to $a$. Therefore, we obtain that for all $k\in\N$, at least one of the $s$ letters $w_{m k}$, $w_{m (k+1)},\ldots,w_{m (k+s-1)}$ is equal to $a$ as well.
\end{proof}

\begin{proposition}
\label{prop:SURDcor-not-necessary}
If $\varphi$ is a $d$-dimensional square morphism of some prime size $s$ and prolongable on $a\in A$ such that
\begin{enumerate}
    \item \label{eq:1} $\forall i_2,\ldots,i_d\in[\![0,s-1]\!]$, $\varphi(a)_{0,i_2,\ldots,i_d}=a$ 
    \item \label{eq:2} $\exists i_1\in[\![0,s-1]\!]$, $\forall i_2,\ldots,i_d\in[\![0,s-1]\!]$, $\varphi(b)_{i_1,\ldots,i_d}=a$ for each $b\in A$
\end{enumerate}
then $\varphi^\omega(a)$ is SURD.
\end{proposition}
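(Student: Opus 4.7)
My plan is to first establish two structural facts about the positions of $a$ in the fixed point $w = \varphi^\omega(a)$ and then to deduce from them that $a$ occurs along every direction with gap at most $s$; the SURD property will then follow from Proposition~\ref{prop:reduction}. The two claims I would prove by induction are the following. \emph{Claim A:} $w(0, j_2, \ldots, j_d) = a$ for every $(j_2, \ldots, j_d) \in \N^{d-1}$; one argues by strong induction on $\max(j_2, \ldots, j_d)$, noting that the Euclidean decomposition $(0, j_2, \ldots, j_d) = s\,(0, \lfloor j_2/s \rfloor, \ldots, \lfloor j_d/s \rfloor) + (0, j_2 \bmod s, \ldots, j_d \bmod s)$ keeps the preimage in the hyperplane $j_1 = 0$, where the induction hypothesis gives $a$, and hypothesis~(\ref{eq:1}) then forces $w(0, j_2, \ldots, j_d) = a$. \emph{Claim B:} writing $j_1 \geq 1$ uniquely as $j_1 = s^k m$ with $k \geq 0$ and $s \nmid m$, if $m \equiv i_1 \pmod s$ then $w(j_1, j_2, \ldots, j_d) = a$ for every $(j_2, \ldots, j_d) \in \N^{d-1}$; this is proved by induction on $k$: for $k = 0$ the first coordinate of the remainder of $\mathbf{j}$ modulo $s$ equals $m \bmod s = i_1$, so hypothesis~(\ref{eq:2}) yields $w(\mathbf{j}) = a$ regardless of the preimage, while for $k \geq 1$ the first coordinate of the preimage is $s^{k-1} m$, the induction hypothesis applies to it, and hypothesis~(\ref{eq:1}) closes the step since the first coordinate of the remainder is $0$.

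Next, for an arbitrary direction $\mathbf{q} = (q_1, \ldots, q_d)$, I would show that consecutive occurrences of $a$ along the line $\{\ell \mathbf{q} : \ell \in \N\}$ are at distance at most $s$. If $q_1 = 0$, Claim~A immediately gives gap $1$. If $q_1 \neq 0$ and $i_1 = 0$, hypothesis~(\ref{eq:2}) by itself (which in this case strengthens~(\ref{eq:1})) forces $w(\mathbf{j}) = a$ whenever $j_1 \equiv 0 \pmod s$, and a short case analysis on whether $s \mid q_1$ or not yields a gap of at most $s$. If $q_1 \neq 0$ and $i_1 \neq 0$, I write $q_1 = s^e u$ with $s \nmid u$ and set $c := i_1 u^{-1} \bmod s$, which lies in $[\![1, s-1]\!]$ because $s$ is prime; any $\ell \geq 1$ with $\ell \equiv c \pmod s$ satisfies $s \nmid \ell$, and for such $\ell$ we have $\ell q_1 = s^e(\ell u)$ with $\ell u \equiv c u \equiv i_1 \pmod s$, so Claim~B gives $w(\ell \mathbf{q}) = a$. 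Every window of $s$ consecutive positive integers contains exactly one such $\ell$ by pigeonhole, so the gap is at most $s$.

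Finally, I would apply Proposition~\ref{prop:reduction} with $b = s$: since $a = w(\mathbf{0})$ occurs along every direction with gap at most $s$, for each $\mathbf{m} \in \N^d$ the prefix of size $\mathbf{m}$ of $w$ occurs along every direction with gap at most $s^{\lceil \log_s(\max \mathbf{m}) \rceil + 1}$, a bound uniform in the direction, which is precisely the SURD property. I expect the main subtlety to be the formulation of Claim~B: the inductive statement must be uniform in the coordinates $(j_2, \ldots, j_d)$ even though the $s$-adic valuation bookkeeping concerns only the first coordinate, and the primality of $s$ enters precisely in the invertibility of $u$ modulo $s$ that makes $c$ well-defined.
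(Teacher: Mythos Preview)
Your argument is correct. The two inductive claims are sound, the case analysis on $(q_1,i_1)$ covers all possibilities, and the invocation of Proposition~\ref{prop:reduction} with $b=s$ gives the SURD conclusion. One small remark: in the case $q_1\neq 0$, $i_1\neq 0$ you should also note that $w(\mathbf{0})=a$ covers $\ell=0$, so the very first gap (from $\ell=0$ to $\ell=c$) is at most $s-1$; you implicitly use this but do not say it.

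Your route is, however, organised differently from the paper's. The paper first reduces to a \emph{binary} morphism $\psi$ obtained by keeping only the $a$'s forced by hypotheses~(\ref{eq:1}) and~(\ref{eq:2}) and replacing everything else by $0$; it then observes that in $\psi^\omega(1)$ every hyperplane $\{j_1=k\}$ is constant, so that $\psi^\omega(1)_{\ell\mathbf{q}}$ depends only on $\ell q_1$. This collapses the problem to the unidimensional fixed point of $\sigma\colon 0\mapsto 0\cdots 010\cdots 0$, $1\mapsto 10\cdots 010\cdots 0$, and the paper then invokes Lemma~\ref{lem:001-101} to get the bound $s$. By contrast, you never reduce to a binary morphism or to dimension one: your Claim~B is effectively a direct $d$-dimensional re-proof of the induction inside Lemma~\ref{lem:001-101}, while your Claim~A replaces the paper's hyperplane-constancy observation. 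Your approach is more self-contained and avoids the (somewhat informal) binary reduction step, at the cost of redoing the $s$-adic valuation induction rather than quoting it as a lemma; the paper's approach isolates a reusable one-dimensional statement and a clean structural observation about the hyperplanes.
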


\begin{proof}
By Proposition~\ref{prop:reduction}, we only have to show that there exists a uniform bound $t$ such that the letter $a$ occurs infinitely often along any direction of $\varphi^\omega(a)$ with gaps bounded by $t$. It is sufficient to prove the result for the fixed point beginning with $1$ of the binary morphism $\psi$ satisfying the hypotheses $\eqref{eq:1}$ and $\eqref{eq:2}$ and having $0$ at any other coordinates in the images of both $0$ and $1$. Indeed, the fixed point $\varphi^\omega(a)$ of any morphism $\varphi$ satisfying $\eqref{eq:1}$ and $\eqref{eq:2}$  differs from this one only by replacing occurrences of $1$ by $a$ and occurrences of $0$ by any letter of the alphabet. For example, for $d=2$, the morphism $\psi$ is 
\[
	\psi\colon 
    0\mapsto 
    \begin{bmatrix}
    0 & 0 & \cdots & 0 & 1 & 0 & \cdots & 0 \\
    0 & 0 & \cdots & 0 & 1 & 0 & \cdots & 0 \\
    \vdots & \vdots &&&&&& \vdots\\ 
    0 & 0 & \cdots & 0 & 1 & 0 & \cdots & 0 
    \end{bmatrix}, \quad 
    1\mapsto
	\begin{bmatrix}
    1 & 0 & \cdots & 0 & 1 & 0 & \cdots & 0 \\
    1 & 0 & \cdots & 0 & 1 & 0 & \cdots & 0 \\
    \vdots & \vdots &&&&&& \vdots\\ 
    1 & 0 & \cdots & 0 & 1 & 0 & \cdots & 0 
	\end{bmatrix}
\]
(where the common columns of $1$'s are placed at position $i_1$ in both images). Each of the hyperplanes 
\[
	H_k=\{\psi^\omega(1)_{k,i_2\ldots,i_d}\colon i_2,\ldots,i_d\in\N\},\  \for k\in\N
\]
of $\psi^\omega(1)$ contains either only $0$'s or only $1$'s. Therefore, for any direction $\mathbf{q}=(q_1,\ldots,q_d)$, we have $\psi^\omega(1)_{\ell \mathbf{q}}=\psi^\omega(1)_{\ell q_1,0,\ldots,0}$, hence the unidimensional word $\N\to A,\ \ell\mapsto \psi^\omega(1)_{\ell \mathbf{q}}$ is the fixed point of the unidimensional morphism
\[
    \sigma\colon 0\mapsto \begin{bmatrix}
    0 & 0 & \cdots & 0 & 1 & 0 & \cdots & 0 
    \end{bmatrix}, \quad 
    1\mapsto
	\begin{bmatrix}
    1 & 0 & \cdots & 0 & 1 & 0 & \cdots & 0 \\
    \end{bmatrix}
\]
(where, again, the common $1$'s are placed at position $i_1$ in both images). By Lemma~\ref{lem:001-101}, we obtain that $\psi^\omega(1)$ is SURD with the uniform bound $t=s$.
\end{proof}

Note that the role of the first coordinate $i_1$ could be played by any of the other coordinates $i_2,\ldots,i_d$ with the ad hoc modifications in the statement of Proposition~\ref{prop:SURDcor-not-necessary}.

Now we give a sufficient condition for a $d$-dimensional word to be non URD.

\begin{proposition} \label{prop:suff-not-nec}
Let $\varphi$ be a $d$-dimensional square morphism of a prime size $s$ prolongable on $a\in A$. Let $\mathbf{q}$ be a direction and let $C=\langle \mathbf{q} \bmod{s} \rangle$. If $\varphi(b)_\mathbf{i}\neq a$ for each $b\in A$ and $\mathbf{i}\in C$ except for $\varphi(a)_\mathbf{0}=a$, then $(\varphi^\omega(a)_{\ell\mathbf{q}})_{i\in\N}\in a (A\setminus \{a\})^\omega$. In particular, $\varphi^\omega(a)$ is not recurrent along the direction $\mathbf{q}$.
\end{proposition}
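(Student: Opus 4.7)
The plan is to prove by strong induction on $\ell\ge 1$ that $w_{\ell\mathbf{q}}\ne a$, where $w=\varphi^\omega(a)$. Since $\varphi$ is prolongable on $a$, we have $w_{\mathbf{0}}=a$, and combining these two facts yields $(w_{\ell\mathbf{q}})_{\ell\in\N}\in a(A\setminus\{a\})^\omega$, from which the ``in particular'' claim is immediate: the length-$1$ prefix $a$ then occurs only once in that sequence, so $w$ fails to be recurrent along $\mathbf{q}$.

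The essential tool is the self-similarity of a fixed point: for every $\mathbf{p}\in\N^d$, if the componentwise Euclidean division by $s$ reads $\mathbf{p}=s\mathbf{p}'+\mathbf{r}$ with $\mathbf{r}\in[\![\mathbf{0},(s-1)\mathbf{1}]\!]$, then $w_{\mathbf{p}}=\varphi(w_{\mathbf{p}'})_{\mathbf{r}}$. Applying this with $\mathbf{p}=\ell\mathbf{q}$, the remainder satisfies $\mathbf{r}\equiv\ell\,\mathbf{q}\pmod s$ componentwise, so in $(\Z/s\Z)^d$ we have $\mathbf{r}=(\ell\bmod s)\cdot(\mathbf{q}\bmod s)$; in particular $\mathbf{r}\in C=\langle\mathbf{q}\bmod s\rangle$. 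This is exactly the observation that brings the hypothesis of the proposition into play.

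I would then split the inductive step into two cases. If $\mathbf{r}\ne\mathbf{0}$, then by hypothesis $\varphi(b)_{\mathbf{r}}\ne a$ for every $b\in A$, whence $w_{\ell\mathbf{q}}=\varphi(w_{\mathbf{p}'})_{\mathbf{r}}\ne a$; this case also handles the base $\ell=1$, since $\mathbf{q}\bmod s\ne\mathbf{0}$ (because $\gcd(\mathbf{q})=1<s$). If instead $\mathbf{r}=\mathbf{0}$, then $s\mid\ell q_i$ for each $i$; because $s$ is prime and $\gcd(q_1,\ldots,q_d)=1$, some coordinate $q_i$ is coprime to $s$, forcing $s\mid\ell$. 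Writing $\ell=s\ell'$ with $1\le\ell'<\ell$, we have $\mathbf{p}'=\ell'\mathbf{q}$ and $w_{\ell\mathbf{q}}=\varphi(w_{\ell'\mathbf{q}})_{\mathbf{0}}$. The induction hypothesis gives $w_{\ell'\mathbf{q}}\ne a$, and the exception clause of the statement (the only $b\in A$ with $\varphi(b)_{\mathbf{0}}=a$ being $b=a$ itself) then yields $w_{\ell\mathbf{q}}\ne a$.

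The argument is essentially bookkeeping, and I do not anticipate any serious obstacle beyond organizing the modular arithmetic cleanly. The primality of $s$ is used in exactly one critical spot, namely to deduce $s\mid\ell$ from $s\mid\ell q_i$ for some $q_i$ not divisible by $s$; without primality, the reduction $\ell=s\ell'$ would fail and the induction could not descend.
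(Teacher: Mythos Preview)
Your proof is correct and follows essentially the same approach as the paper: both use the self-similarity identity $w_{\ell\mathbf{q}}=\varphi(w_{\mathbf{p}'})_{\mathbf{r}}$, observe that $\mathbf{r}\in C$, and exploit primality of $s$ to descend from $\ell$ to $\ell/s$. The only cosmetic difference is that the paper phrases the descent as a minimal-counterexample argument rather than strong induction, and your write-up is in fact a bit more explicit about the case split and the base case.
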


\begin{proof} 
Suppose that the first occurrence of $a$ after that in position $\mathbf{0}$ along the direction $\mathbf{q}$ occurs in position $\ell \mathbf{q}$. Since, for each $b\in A$, $\varphi(b)$  has non-$a$ elements on all places defined by $C\setminus \{\mathbf{0}\}$, the letter $\varphi^\omega(a)_{\ell \mathbf{q}}$ must be placed at the coordinate $\mathbf{0}$ of the image of $a$. In particular, the preimage of $\varphi^\omega(a)_{\ell \mathbf{q}}$ must be $a$. Because $s$ is prime, $\ell$ must be divisible by $s$ and the preimage of $\varphi^\omega(a)_{\ell \mathbf{q}}$ is $\varphi^\omega(a)_{\frac{\ell}{s}\mathbf{q}}$. But by the choice of $\ell$ and since $0<\frac{\ell}{s}<\ell$, we must also have $\varphi^\omega(a)_{\frac{\ell}{s}\mathbf{q}}\neq a$, a contradiction.
\end{proof}

The next results shows that the condition of Proposition~\ref{prop:suff-not-nec} is not necessary.

\begin{proposition}
The fixed point $\varphi^{\omega}(1)$ of the morphism
\[
	\varphi:0\mapsto 
    \begin{bmatrix}
	1 & 1 & 0\\
	0 & 0 & 0\\
	0 & 0 & 1
    \end{bmatrix}
	\quad 1\mapsto 
	\begin{bmatrix}
	1 & 1 & 1\\
	0 & 1 & 0\\
	1 & 1 & 0 
    \end{bmatrix}
\]
is not recurrent along the direction $(1,3)$. 
\end{proposition}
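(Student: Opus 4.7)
The plan is to exploit the self-similarity of $w := \varphi^\omega(1)$ in a $3$-adic fashion. By the definition of a fixed point of a size-$3$ morphism, any position $(i,j) \in \N^2$ decomposes as $(i,j) = 3(q_1,q_2) + (r_1,r_2)$ with $r_1,r_2 \in \{0,1,2\}$, and $w_{(i,j)} = \varphi(w_{(q_1,q_2)})_{(r_1,r_2)}$. For the direction $(1,3)$, writing $\ell = 3q+r$ with $r \in \{0,1,2\}$ and $k \in \{0,1,2\}$, the position $(\ell,\,3\ell+k)$ decomposes as $3(q,\,3q+r) + (r,k)$. This suggests studying the three auxiliary sequences
\[
    b^{(k)}_\ell := w_{(\ell,\,3\ell+k)}, \qquad k \in \{0,1,2\},
\]
which correspond to the line of direction $(1,3)$ and its two translates by $(0,1)$ and $(0,2)$, and which satisfy the mutual recurrence $b^{(k)}_\ell = \varphi(b^{(r)}_q)_{(r,k)}$.

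A direct inspection of the $3 \times 3$ matrices defining $\varphi$ (with the bottom-to-top convention of Section~\ref{sec:def}) shows that $\varphi(0)$ and $\varphi(1)$ agree at $(0,1)$ and $(2,1)$, both giving $0$, and at $(0,2)$ and $(1,2)$, both giving $1$. The recurrences for $b^{(1)}$ and $b^{(2)}$ therefore do not reference $b^{(0)}$ and read
\begin{gather*}
    b^{(1)}_{3q} = 0,\quad b^{(1)}_{3q+1} = b^{(1)}_q,\quad b^{(1)}_{3q+2} = 0, \\
    b^{(2)}_{3q} = 1,\quad b^{(2)}_{3q+1} = 1,\quad b^{(2)}_{3q+2} = b^{(2)}_q.
\end{gather*}
With base cases $b^{(1)}_0 = \varphi(1)_{(0,1)} = 0$ and $b^{(2)}_0 = \varphi(1)_{(0,2)} = 1$, an immediate induction on $\ell$ gives $b^{(1)}_\ell = 0$ and $b^{(2)}_\ell = 1$ for every $\ell \in \N$.

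Feeding these constants back into the relation for $k = 0$ yields $b^{(0)}_{3q} = b^{(0)}_q$ (local $(0,0)$: the image equals the preimage), $b^{(0)}_{3q+1} = b^{(1)}_q = 0$ (local $(1,0)$ is also the identity, but the value is already forced to $0$), and $b^{(0)}_{3q+2} = 1 - b^{(2)}_q = 0$ (local $(2,0)$ is the unique position at which $\varphi(0)$ and $\varphi(1)$ are complementary). With $b^{(0)}_0 = w_{(0,0)} = 1$, a strong induction on $\ell$ splitting by $\ell \bmod 3$ then shows $b^{(0)}_\ell = 0$ for every $\ell \ge 1$. Hence the word along the direction $(1,3)$ is $(w_{\ell(1,3)})_{\ell\in\N} = 1 \cdot 0^\omega$, in which the letter $1$ appears exactly once; this word is not recurrent, so $w$ is not recurrent along $(1,3)$. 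The only real obstacle is careful bookkeeping: one must read off the nine entries of $\varphi(0)$ and $\varphi(1)$ correctly under the bottom-to-top convention so as to obtain the right system of nine recurrences. Once that is done, the three cascading inductions (first $b^{(1)}$, then $b^{(2)}$, then $b^{(0)}$) are entirely routine.
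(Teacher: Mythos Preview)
Your proof is correct. Both your argument and the paper's rest on the same $3$-adic preimage relation, but the organization differs. The paper argues by minimal counterexample: assuming a smallest positive $i$ with $w_{i,3i}=1$, it splits on $i\bmod 3$ and, in the two nontrivial cases, first determines the value $w_{i',3i'+1}$ or $w_{i',3i'+2}$ at the preimage, then exploits that this point lies in the same $3\times 3$ block as $(i',3i')$ to read off $w_{i',3i'}=1$, contradicting minimality. You instead make the three translated lines $b^{(k)}_\ell=w_{(\ell,3\ell+k)}$ explicit from the outset, observe that the subsystem for $b^{(1)},b^{(2)}$ closes on itself and yields the constant sequences $0,1$, and then feed those constants into the recurrence for $b^{(0)}$. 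Your forward induction replaces the paper's ad hoc block-tracking in Cases~2 and~3 by a single clean computation; the price is having to set up three sequences instead of one. One cosmetic point: your parenthetical that $(2,0)$ is ``the unique position at which $\varphi(0)$ and $\varphi(1)$ are complementary'' is only true among the three positions $(r,0)$ relevant to the $k=0$ recurrence (globally there are five positions where they differ); this does not affect the argument.
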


\begin{proof}
We let $w=\varphi^{\omega}(1)$. We show that the sequence we get along the direction $(1,3)$ is $10^\omega$. It can be seen directly that the first symbols are 100, then we proceed by induction. Suppose the converse, and that $i$ is the smallest positive integer such that $w_{i,3i}=1$. We consider three cases: $i=3i'$, $i=3i'+1$, or $i=3i'+2$. In each case, our aim is to prove that $w_{i',3i'}=1$, contradicting the minimality of $i$.

Case 1: $i=3i'$. In this case $\varphi(w_{i',3i'})_{0,0}=w_{i,3i}$. Since $\varphi(0)_{0,0}=0$ and $w_{i,3i}=1$ by the assumption, we must have $w_{i',3i'}=1$.

Case 2: $i=3i'+1$. In this case $\varphi(w_{i',3i'+1})_{1,0}=w_{i,3i}$. Since $\varphi(0)_{1,0}=0$ and $w_{i,3i}=1$, we have  $w_{i',3i'+1}=1$. The coordinate $(i',3i'+1)$ being a position $(i' \bmod 3, 1)$ in some image $\varphi(a)$, this is possible only in the case when $a=1$ and $i'\equiv 1 \pmod 3$. Indeed, this is the only non-0 position with second coordinate 1 in $\varphi(0)$ and $\varphi(1)$. Therefore, we obtain $w_{i',3i'}=1$.

Case 3: $i=3i'+2$. In this case $\varphi(w_{i',3i'+2})_{2,0}=w_{i,3i}$. Since $\varphi(1)_{2,0}=0$ and $w_{i,3i}=1$, we have $w_{i',3i'+2}=0$. The coordinate $(i',3i'+2)$ being a position $(i' \bmod 3, 2)$ in some image $\varphi(a)$, we must have $a=0$ and $i'\equiv 2 \pmod 3$. Indeed, this is the only non-1 position with second coordinate 2 in $\varphi(0)$ and $\varphi(1)$. We obtain once again that $w_{i',3i'}=1$.
\end{proof}

The next theorem gives a characterization of SURD fixed points of square binary morphisms of size $2$.

\begin{theorem}\label{thm:characterization}
Let $\varphi$ be a bidimensional binary square morphism of size $2$ prolongable on $1$. The fixed point $\varphi^\omega(1)$ is SURD if and only if either $\varphi(0)_{0,0}=1$ or $\varphi(1)=\left[\begin{smallmatrix}
    1 & 1 \\
    1 & 1
    \end{smallmatrix}\right]$.
\end{theorem}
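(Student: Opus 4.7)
The proof splits into the two directions of the equivalence.

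\textbf{Sufficiency.} There are two easy cases. If $\varphi(0)_{(0,0)}=1$, then together with $\varphi(1)_{(0,0)}=1$ (from prolongability) we get $\varphi(b)_{(0,0)}=1$ for both $b\in\{0,1\}$, so Corollary~\ref{cor:1} with $a=1$ and $s=2$ gives SURD. If instead $\varphi(1)=\left[\begin{smallmatrix}1&1\\1&1\end{smallmatrix}\right]$, then by induction $\varphi^n(1)$ is the all-$1$ block of size $(2^n,2^n)$, hence $\varphi^\omega(1)$ is the constant word $1$, trivially SURD.

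\textbf{Necessity.} I argue by contrapositive: assume $\varphi(0)_{(0,0)}=0$ and $\varphi(1)\ne\left[\begin{smallmatrix}1&1\\1&1\end{smallmatrix}\right]$, and show that $w=\varphi^\omega(1)$ is not SURD. The key tool is the self-similarity
\[
w_{2\mathbf{i}}=\varphi(w_\mathbf{i})_{(0,0)}=w_\mathbf{i}\qquad\text{for all }\mathbf{i}\in\N^2,
\]
which follows directly from $\varphi(0)_{(0,0)}=0$ and $\varphi(1)_{(0,0)}=1$, and iterates to $w_{2^n\mathbf{i}}=w_\mathbf{i}$ for every $n$. Since $\varphi(1)$ contains a zero, pick $(a,b)\in\{(1,0),(0,1),(1,1)\}$ with $\varphi(1)_{(a,b)}=0$. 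Consider the direction $\mathbf{q}=(a,b)$; using $w_{2\mathbf{i}+\mathbf{r}}=\varphi(w_\mathbf{i})_\mathbf{r}$, the sequence $u_\ell=w_{\ell\mathbf{q}}$ satisfies $u_{2\ell}=u_\ell$ and $u_{2\ell+1}=\varphi(u_\ell)_{(a,b)}$, which identifies $u$ as the fixed point beginning with $1$ of the one-dimensional constant-length-$2$ morphism $\sigma\colon 0\mapsto 0\,x,\ 1\mapsto 1\,0$, where $x=\varphi(0)_{(a,b)}$.

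If $x=0$ for some valid $(a,b)$: then $\sigma^\omega(1)=1\,0\,0\,0\cdots$, so $u$ is not even recurrent, contradicting URD (hence SURD) along $\mathbf{q}$ and we are done. Otherwise $\varphi(0)_{(a,b)}=1$ for \emph{every} zero-position $(a,b)$ of $\varphi(1)$, which reduces the problem to a finite explicit family of morphisms indexed by $Z_1=\{(a,b):\varphi(1)_{(a,b)}=0\}\subseteq\{(1,0),(0,1),(1,1)\}$ together with the free values $\varphi(0)_{(a,b)}$ for $(a,b)\notin Z_1$. For each residual sub-case my plan is to produce an oblique direction (e.g.\ $(1,3)$, $(3,1)$, or an unbounded family such as $(1,2^n-1)$) and a size $\mathbf{s}$ (typically $(1,1)$ or $(2,2)$) along which the prefix has arbitrarily large gaps. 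The self-similarity then yields a bitwise closed form for $w$ of the shape $w_{(i,j)}=1\oplus f(i,j)\bmod 2$ for a digit-sum function $f$ determined by the complementarity pattern of $\varphi$ (in the extreme case $Z_1=\{(1,0),(0,1),(1,1)\}$ one gets $f(i,j)=s(i\vee j)$ with $\vee$ bitwise OR), from which one reads off the failure of a uniform direction-independent bound.

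\textbf{Main obstacle.} The bulk of the technical work lies in the second branch of the necessity: when $\varphi(0)_{(a,b)}=1$ at every zero of $\varphi(1)$, the restrictions of $w$ along all coordinate and diagonal directions are Thue--Morse-like and therefore uniformly recurrent, so a single-direction argument is insufficient. One must exploit the two-dimensional bitwise structure, and in particular vary the direction $\mathbf{q}$ along an unbounded family so as to force the uniform SURD bound $b_\mathbf{s}$ to blow up, which requires a careful analysis of how the recursive/self-similar pattern of $w$ restricts to non-axis directions.
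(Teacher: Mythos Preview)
Your sufficiency argument is fine and matches the paper (with the minor nicety that you treat the all-ones $\varphi(1)$ case explicitly rather than folding it into Corollary~\ref{cor:1}). Your opening move in the necessity direction is also correct and coincides with the paper's: when $\varphi(0)_{(0,0)}=0$ and some off-origin position $(a,b)$ has $\varphi(1)_{(a,b)}=\varphi(0)_{(a,b)}=0$, the restriction of $w$ along $(a,b)$ is indeed $1\,0^\omega$, killing even recurrence.

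The genuine gap is everything you label ``Main obstacle''. You correctly isolate the residual family (zeros of $\varphi(1)$ sit over ones of $\varphi(0)$), but from there you only \emph{announce} a plan: derive a bitwise closed form such as $w_{(i,j)}=1\oplus s_2(i\vee j)$ and then ``read off'' the failure of a uniform bound along a parametric family of directions. Neither step is carried out, and the second is where all the difficulty lies: once the axis and diagonal restrictions are Thue--Morse--like, they are uniformly recurrent, so one really must exhibit, for every $N$, an explicit direction $\mathbf{q}_N$ and a position where the prefix letter is absent for at least $N$ consecutive steps. Your sketch gives no mechanism for producing such $\mathbf{q}_N$, nor any estimate linking the bitwise formula to gap lengths.

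The paper's proof of these residual cases is substantially different from your sketch and is where the real content sits. It does \emph{not} proceed via a global bitwise formula. Instead it proves two arithmetic lemmas about the Thue--Morse word (constancy of $t_{md}$ for $d=2^\ell-1$ and $d=2^\ell+1$ over a range of $m$), observes that in each residual case a row, column or diagonal of $w$ is $\mathbf{t}$ or $\bar{\mathbf{t}}$, and then builds explicit parametric directions such as $(2^{2\ell}(2^\ell-1),\,2^\ell+1)$ or $(1,(2^\ell-1)2^\ell)$ along which a block $0^{2^\ell-1}$ is forced. One sub-case (your $Z_1=\{(1,1)\}$ with the specific $\varphi(0)$) is even handled by showing non-recurrence along the fixed direction $(2,1)$ via a separate induction. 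In short: your reduction is sound, but the proof is not complete until those parametric-direction arguments (or something equivalent) are actually supplied.
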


The ``if'' part follows from Corollary~\ref{cor:1}. The ``only if'' part is proven with a rather technical argument involving a case study analysis and using certain properties of arithmetic progressions in the Thue-Morse word $\mathbf{t}=0110100110010110\cdots$. 

We first provide two useful lemmas about the Thue-Morse word \cite{Thue--1906}. Recall that this word is the fixed point of the unidimensional morphism $0\mapsto 01,\ 1\mapsto 10$. It can also be defined thanks to the function $s_2\colon\N\to \N$ that returns the sum $s_2(n)$ of the digits in the binary expansion of $n$: the $(n+1)$-th letter $t_n$ of the Thue-Morse word $\mathbf{t}$ is equal to $0$ if $s_2(n)\equiv 0\pmod 2$ and to $1$ otherwise.

\begin{lemma}\label{lemma:TM1} 
For any $\ell\in\N$, the Thue--Morse word $\mathbf{t}=(t_n)_{n\in \N}$ satisfies $t_0=0$ and $t_d=t_{2d}=t_{3d}=\ldots=t_{2^\ell d}$ with $d=2^{\ell}-1$. Moreover, $t_d=1$ if $\ell$ is odd and $t_d=0$ if $\ell$ is even.
\end{lemma}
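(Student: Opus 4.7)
My plan is to exploit the digit-sum characterization of the Thue--Morse word: $t_n \equiv s_2(n) \pmod 2$, where $s_2(n)$ denotes the sum of the binary digits of $n$. Then the whole statement reduces to a clean computation of $s_2(kd)$ for $d = 2^\ell - 1$ and $k \in \{1, 2, \ldots, 2^\ell\}$.

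First I would dispatch the easy parts. The equality $t_0 = 0$ is immediate from $s_2(0) = 0$. The binary expansion of $d = 2^\ell - 1$ consists of exactly $\ell$ ones, so $s_2(d) = \ell$, which gives $t_d = 1$ if $\ell$ is odd and $t_d = 0$ if $\ell$ is even. The case $k = 2^\ell$ is also easy: $kd = 2^{2\ell} - 2^\ell$ has binary expansion consisting of $\ell$ ones followed by $\ell$ zeros, so $s_2(kd) = \ell$.

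The main step is to show that $s_2(kd) = \ell$ for every $k \in \{1, \ldots, 2^\ell - 1\}$ as well. The key algebraic identity is
\[
    kd = k \cdot 2^\ell - k = (k-1) \cdot 2^\ell + (2^\ell - k).
\]
Since $k-1$ and $2^\ell - k$ both lie in $[0, 2^\ell - 1]$, their binary expansions occupy disjoint blocks of $\ell$ bits in $(k-1) \cdot 2^\ell + (2^\ell - k)$ with no carries, so
\[
    s_2(kd) = s_2(k-1) + s_2(2^\ell - k).
\]
Setting $j = k-1$, the integers $j$ and $2^\ell - 1 - j$ satisfy $j + (2^\ell - 1 - j) = 2^\ell - 1$, whose binary expansion is all ones on $\ell$ bits, so $j$ and $2^\ell - 1 - j$ have complementary binary expansions on $\ell$ bits. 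Hence $s_2(j) + s_2(2^\ell - 1 - j) = \ell$, giving $s_2(kd) = \ell$.

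Combining the cases, $s_2(kd) = \ell$ for all $k \in \{1, \ldots, 2^\ell\}$, and therefore $t_{kd} \equiv \ell \pmod 2$ takes the same value for all such $k$, equal to $t_d$, and this common value is $1$ when $\ell$ is odd and $0$ when $\ell$ is even. I do not anticipate any real obstacle; the only subtle point is verifying the no-carry condition in the decomposition $kd = (k-1)\cdot 2^\ell + (2^\ell - k)$, which holds precisely because $k \le 2^\ell - 1$ ensures $2^\ell - k \le 2^\ell - 1$.
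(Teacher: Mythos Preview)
Your proof is correct. Both your argument and the paper's rest on the digit-sum characterization $t_n\equiv s_2(n)\pmod 2$ and aim to show $s_2(kd)=\ell$ for $1\le k\le 2^\ell$, but the decompositions differ. The paper first strips powers of $2$ from $k$, writing $k=r2^i$ with $r$ odd, and then computes $s_2(r(2^\ell-1))$ via the subtractive identity $r(2^\ell-1)=(r2^\ell-1)-(r-1)$: since the low $\ell$ bits of $r2^\ell-1$ are all $1$'s, subtracting $r-1$ (which fits in $\ell$ bits) removes exactly $s_2(r-1)$ ones, and $r$ odd gives $s_2(r-1)=s_2(r)-1$. Your additive decomposition $kd=(k-1)2^\ell+(2^\ell-k)$ is cleaner: it works uniformly for all $k\in[\![1,2^\ell-1]\!]$ without reducing to the odd case, the no-carry condition is immediate, and the bitwise-complement observation $s_2(k-1)+s_2(2^\ell-k)=\ell$ finishes in one line. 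The paper's route, on the other hand, makes the role of the binary expansion of $r2^\ell-1$ explicit, which connects more visibly to the cited Lemma~3.2 of Bucci et al.
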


\begin{proof}
Let $m\in[\![1,2^\ell]\!]$. There exist $r$ odd and $i\geq 0$ such that $m = r2^i$.  Denote by $r_j r_{j-1}\cdots r_1 r_0$ the binary expansion $(r)_2$ of $r$. In particular, $r_0=1$ since $r$ is odd. Also $r\leq m\leq 2^\ell$ and $r$ odd imply that $r<2^\ell$ and $|(r)_2|\leq \ell$. We have $(r2^\ell)_2=r_j r_{j-1}\cdots r_1 r_0 0^\ell$ and $(r2^\ell-1)_2=r_j r_{j-1}\cdots r_1 0 1^\ell$. Therefore, 
\begin{align*}
    s_2(r(2^\ell-1)) &= s_2(r2^\ell -1-(r-1)) \\
    &= s_2(r)-1+\ell-s_2(r-1) \tag{as $|(r)_2|\leq \ell$}\\
    &= s_2(r)-1+\ell-s_2(r)+1 \tag{as $r$ odd}\\
    &= \ell.
\end{align*}    
Since $s_2(m(2^\ell-1))=s_2(r(2^\ell-1))=\ell$, the conclusion follows.
\end{proof}

Note that the proof of the previous lemma is a modification of  Lemma 3.2 in \cite{Bucci--2013}.

\begin{lemma} \label{lemma:TM0} 
For any positive integer $\ell$, the Thue--Morse word $\mathbf{t}=(t_n)_{n\in \N}$ satisfies  $t_{0}=t_{d}=t_{2d}=\ldots=t_{2^\ell d}=0$ with $d=2^\ell+1$.
\end{lemma}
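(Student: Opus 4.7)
The plan is to exploit the standard sum-of-binary-digits characterization of the Thue--Morse word, namely $t_n\equiv s_2(n)\pmod 2$, just as in the proof of Lemma~\ref{lemma:TM1}. It is therefore enough to establish that $s_2(md)$ is even for every $m\in[\![0,2^\ell]\!]$, where $d=2^\ell+1$.

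The key observation I would use is that, for such $m$, the two summands of $md=m\cdot 2^\ell+m$ have binary expansions whose supports are disjoint, so the addition produces no carry and one obtains $s_2(md)=s_2(m\cdot 2^\ell)+s_2(m)=2s_2(m)$, which is obviously even. More precisely, for $0\le m<2^\ell$ the binary expansion of $m$ only uses positions $0,\dots,\ell-1$, while that of $m\cdot 2^\ell$ only uses positions $\ge \ell$; so the bit positions are disjoint.

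The only case that is not covered by this simple no-carry argument is the boundary case $m=2^\ell$, which I would handle by direct computation: $md=2^{2\ell}+2^\ell$, so $s_2(md)=2$, again even. The case $m=0$ is trivial since $t_0=s_2(0)=0$. Combining the three cases gives $t_{md}\equiv 0\pmod 2$ for every $m\in[\![0,2^\ell]\!]$, and hence $t_0=t_d=t_{2d}=\cdots=t_{2^\ell d}=0$, as required.

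I do not anticipate a real obstacle here; the only mildly delicate point is making sure that at the boundary $m=2^\ell$ the bit at position $\ell$ is not shared between $m$ and $m\cdot 2^\ell$ (it is not, because $m\cdot 2^\ell=2^{2\ell}$ has its unique bit at position $2\ell$, not $\ell$). This mirrors, and is slightly simpler than, the parity computation carried out in the proof of Lemma~\ref{lemma:TM1}.
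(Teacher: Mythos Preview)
Your proof is correct and follows essentially the same approach as the paper: both arguments use the sum-of-digits characterization, observe that for $0\le m<2^\ell$ the two summands $m\cdot 2^\ell$ and $m$ in $md$ have disjoint binary supports so that $s_2(md)=2s_2(m)$, and handle the boundary case $m=2^\ell$ by the direct computation $s_2(2^{2\ell}+2^\ell)=2$.
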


\begin{proof}
Let $m\in[\![0,2^\ell-1]\!]$. Since $m<2^\ell$, $|(m)_2|\leq \ell $. So $s_2(m(2^\ell+1))=s_2(m)+s_2(m)$ is even. It follows that $t_{m(2^\ell+1)}=0$. For $m=2^\ell$, we have $s_2(m(2^\ell+1))=s_2(2^{2\ell}+2^\ell)=2$ and $t_{m(2^\ell+1)}=0$.
\end{proof}

\begin{proof}[Proof of Theorem~\ref{thm:characterization}] 
The condition is sufficient by Corollary~\ref{cor:1}. To prove that it is necessary, we show by  a case study that the fixed points beginning with $1$ of all the other possible morphisms are not SURD.  For the sake of clarity, we set $w=\varphi^\omega(1)$.

First, note that $\varphi(1)_{i,j}=\varphi(0)_{i,j}=0$ for some $(i,j)\ne (0,0)$ implies that  $w$ contains $10^\omega$ along the direction $(i,j)$. Hence for a given position $(i,j)$, it is sufficient to consider $(\varphi(1)_{i,j},\varphi(0)_{i,j})\in\{(0,1),(1,0),(1,1)\}$. The graph of our case study is depicted in Figure~\ref{fig:2x2}.
\begin{figure}[htb]
\begin{center}
\scalebox{0.9}{
\begin{tikzpicture}[scale=1]

\clip (9.8,8.4) rectangle (-1.8,-1.7);

\tikzstyle{01}=[draw=blue, line width=1pt]
\tikzstyle{10}=[draw=red, line width=1pt]
\tikzstyle{11}=[draw=green, line width=1pt]

\tikzstyle{every node}=[shape=rectangle,fill=none,draw=none,minimum size=2.7cm,inner sep=2pt]
\tikzstyle{every path}=[draw=blue]

\begin{scope}[shift={(0,8)}]
\morph{1}{0}{.}{.}{0}{1}{.}{.};
\node(a1) at (1.5,-0.5){};
\end{scope}

\begin{scope}[shift={(-2,4)}]
\morph{1}{0}{0}{?}{0}{1}{1}{?};
\node(b1) at (1.5,-0.5){};
\end{scope}
\draw[01] (a1.south)--(b1.north);

\begin{scope}[shift={(0,4)}]
\morph{1}{0}{1}{?}{0}{1}{0}{?};
\node(c1) at (1.5,-0.5){};
\end{scope}
\draw[10] (a1.south)--(c1.north);

\begin{scope}[shift={(2,4)}]
\morph{1}{0}{1}{?}{0}{1}{1}{?};
\node(d1) at (1.5,-0.5){};
\end{scope}
\draw[11] (a1.south)--(d1.north);

\begin{scope}[shift={(1,0)}]
\morph{1}{0}{1}{0}{0}{1}{1}{1};
\node(k1) at (1.5,-0.5){};
\end{scope}
\draw[01] (d1.south)--(k1.north);

\begin{scope}[shift={(3,0)}]
\morph{1}{0}{1}{1}{0}{1}{1}{*};
\node(l1) at (1.5,-0.5){};
\end{scope}
\draw[10] (d1.south) to [bend right=20](l1.north);
\draw[11] (d1.south) to [bend left=20](l1.north);

\tikzstyle{every node}=[shape=rectangle,fill=none,draw=none,minimum size=0.2cm,inner sep=2pt]

\begin{scope}[shift={(1,-1.5)}]
\node at (-2,4){Case 1};
\node at (0,4){Case 2};
\node at (2,4){Case 3};
\node at (1,0){Case 3.1};
\node at (3,0){Case 3.2};
\node at (4,3.95){Symm.};
\node at (4,3.45){Case 2};
\node at (6,4){Case 4};
\node at (8,3.95){Symm.};
\node at (8,3.45){Case 3};
\end{scope}

\tikzstyle{every node}=[shape=rectangle,fill=none,draw=none,minimum size=2.7cm,inner sep=2pt]
\tikzstyle{every path}=[draw=blue]

\begin{scope}[shift={(4,8)}]
\morph{1}{1}{?}{.}{0}{0}{?}{.};
\node(a2) at (1.5,-0.5){};
\end{scope}

\begin{scope}[shift={(8,8)}]
\morph{1}{1}{?}{.}{0}{1}{?}{.};
\node(a3) at (1.5,-0.5){};
\end{scope}

\begin{scope}[shift={(4,4)}]
\morph{1}{1}{0}{?}{0}{0}{1}{?};
\node(b) at (1.5,-0.5){};
\end{scope}
\draw[01] (a2.south)--(b.north);

\begin{scope}[shift={(6,4)}]
\morph{1}{1}{1}{*}{0}{*}{*}{*};
\node(c) at (1.5,-0.5){};
\end{scope}
\draw[10] (a2.south) to [bend right=20] (c.north);
\draw[11] (a2.south) to [bend left=20](c.north);

\draw[10] (a3.south) to [bend right=20] (c.north);
\draw[11] (a3.south) to [bend left=20](c.north);

\begin{scope}[shift={(8,4)}]
\morph{1}{1}{0}{*}{0}{1}{1}{*};
\node(d) at (1.5,-0.5){};
\end{scope}
\draw[01] (a3.south)--(d.north);
\end{tikzpicture}
}
\end{center}
\caption{Square morphisms of size 2. A  black (resp.\ white) cell corresponds to a position filled with letter $1$ (resp.\ $0$). A gray cell corresponds to a position which can contain any letter. The possible pairs $(\varphi(1)_{i,j},\varphi(0)_{i,j})$ with $i,j\in\{0,1\}$ are successively considered. A blue line corresponds to the pair $(\varphi(1)_{i,j},\varphi(0)_{i,j})=(0,1)$, a red one to $(1,0)$ and a green one to $(1,1)$.}
\label{fig:2x2}
\end{figure}
\medskip

{\bf Case 1}
\[
\varphi\colon 0\mapsto 
\begin{bmatrix} 
 	1 & *\\ 0 & 1 
\end{bmatrix}, \quad 
1\mapsto 
\begin{bmatrix}
	0 & *\\ 1 & 0 
\end{bmatrix}
\]

We show that the factor $10^{2^\ell-1}$ occurs along the direction $(p,q) =(2^{2\ell} (2^\ell-1),2^\ell +1)$ with $\ell$ odd (see Figure~\ref{fig:case1}). First note that the first row of $\varphi^\omega(1)$ is equal to $\bar{\mathbf{t}}$. Hence, the first $2^{2\ell}$ rows contain $\varphi^{2\ell}(\bar{\mathbf{t}})$. Let $d=2^{\ell}-1$. By Lemma~\ref{lemma:TM1}, the arithmetical subsequence $(\bar{t}_{md})_{m\in \N}$ begins with $10^{2^\ell}$. Thus, $w_{mp,0}=w_{md2^{2\ell},0}=w_{md,0}=0$ for $m\in\{1,\ldots,2^\ell\}$. To conclude, observe that the first column of  $\varphi^{2\ell}(0)$ is a prefix of $\mathbf{t}$. By Lemma~\ref{lemma:TM0}, the arithmetical subsequence $(t_{mq})_{m\in \N}$ begins with $0^{2^\ell}$. Let $m\in[\![1,2^\ell-1]\!]$. As $(2^\ell -1)q= 2^{2\ell}-1 < 2^{2 \ell}$, the letter $w_{mp,mq}$ is inside a square $\varphi^{2\ell}(0)$ with the bottom left corner at position $(mp,0)$, hence $w_{mp,mq}=0$.
\begin{figure}[ht!]
\begin{center}
\begin{tikzpicture}[scale=0.4]
\tikzstyle{every node}=[shape=rectangle,fill=none,draw=none,minimum size=0cm,inner sep=2pt]
\tikzstyle{every path}=[draw=black,line width = 0.5pt]

\draw (0,0) rectangle (6.4,6.4);
\node at (3.2,3.2){\large $\varphi^{2\ell}(1)$};
\node at (0.2,0.2){\tiny 1};
\node at (0.2,0.5){\tiny 0};		
\node at (0.2,0.8){\tiny 0};	
\node at (0.2,1.1){\tiny 1};
\draw[dotted] (0.2,1.3)--(0.2,1.8);
\node at (0.2,2){\tiny 1};
\draw[dotted] (0.2,2.7)--(0.2,3.1);
\node at (0.2,3.8){\tiny 1};
\draw[dotted] (0.2,4.5)--(0.2,4.9);
\node at (0.2,5.6){\tiny 1};
\draw[dotted] (0.2,5.9)--(0.2,6.3);

\foreach \x in {10,20,30}
{
	\draw (\x,0) rectangle (6.4+\x,6.4);
	\node at (\x+3.2,3.2){\large $\varphi^{2\ell}(0)$};
	\node at (\x+0.2,0.2){\tiny 0};
	\node at (\x+0.2,0.5){\tiny 1};		
	\node at (\x+0.2,0.8){\tiny 1};	
	\node at (\x+0.2,1.1){\tiny 0};
	\draw[dotted] (\x+0.2,1.3)--(\x+0.2,1.8);
	\node at (\x+0.2,2){\tiny 0};
	\draw[dotted] (\x+0.2,2.7)--(\x+0.2,3.1);
	\node at (\x+0.2,3.8){\tiny 0};
	\draw[dotted] (\x+0.2,4.5)--(\x+0.2,4.9);
	\node at (\x+0.2,5.6){\tiny 0};
	\draw[dotted] (\x+0.2,5.9)--(\x+0.2,6.3);
}
\foreach \x in {0,10,20}
{	
	\node at (\x+1.5+6.4,3){$\ldots$};
} 

\tikzstyle{every path}=[draw=red,line width = 1pt, ->]
\draw (0.4,0.1) to [bend left = 5] node [above] {\color{red}$p$} (10,0.1);
\draw (10+0.4,2) to [bend left = 5] node [above] {\color{red}$p$} (10+10,2);
\draw (20+0.4,3.9) to [bend left = 5] node [above] {\color{red}$p$} (20+10,3.9);
	\draw (10+0.5,0.2+0.1) to [bend right = 20] node [right] {\color{red}$q$} (10+0.5,0.2+1.7);
	\draw (20+0.5,2.0+0.1) to [bend right = 20] node [right] {\color{red}$q$} (20+0.5,2.0+1.7);
	\draw (30+0.5,3.8+0.1) to [bend right = 20] node [right] {\color{red}$q$} (30+0.5,3.8+1.7);
\end{tikzpicture}
\end{center}
\caption{Structure of Case 1 morphisms with $(p,q) =(2^{2\ell} (2^\ell-1),2^\ell +1)$ where $\ell$ is odd.}\label{fig:case1}
\end{figure}
\medskip

{\bf Case 2}
\[
\varphi\colon 0\mapsto 
\begin{bmatrix} 
 1 & *\\ 0 & 0 
\end{bmatrix}, \quad 
1\mapsto 
\begin{bmatrix}
	0 & *\\ 1 & 1
\end{bmatrix}
\]

In this case we will prove that for all odd $n\in\N$, the factor $0^{2^n-1}$ occurs along the direction $(1, (2^n-1)2^n)$. More precisely, we claim that for all odd $n\in\N$ and all $i\in[\![1,2^n-1]\!]$, we have $w_{i,i(2^n-1)2^n}=0$. First, notice that there are only $0$'s on the bottom line of the images $\varphi^m(0)$ for all $m\in\N$, namely, $\varphi^m(0)_{i,0}=0$ for all $m\in\N$ and all $i\in[\![0, 2^m-1]\!]$. Second, we use Lemma~\ref{lemma:TM1} which gives $w_{0,i(2^n-1)}=0$ for all odd $n\in\N$ and all $i\in[\![1,2^n]\!]$. By applying the power morphism $\varphi^n$, we get $w_{0,i(2^n-1)2^n}=0$ for all odd $n\in\N$ and all $i\in[\![1, 2^n]\!]$. Since the latter points belong to left bottom corner of $\varphi^n(0)$, we obtain that $w_{i,i(2^n-1)2^n}=0$ for every $i\in[\![1, 2^n-1]\!]$ as desired.
\medskip

{\bf Case 3.1}
\[
\varphi\colon 0\mapsto 
\begin{bmatrix} 
 	1 & 1\\ 0 & * 
\end{bmatrix}, \quad 
1\mapsto 
\begin{bmatrix}
	0 & 0\\ 1 & * 
\end{bmatrix}
\]

Similarly to Case 1, we can show that the factor $10^{2^\ell-1}$ occurs along the direction $(p,q) =(2^\ell +1,2^{2\ell} (2^\ell-1)+2^\ell +1)$ with $\ell$ odd. Indeed, in this case, the Thue-Morse word or its complement appears in the first column and in the diagonal; see Figure~\ref{fig:case3-1}.
\begin{figure}[htbp]
\begin{center}
\begin{tikzpicture}[scale=0.4]

\tikzstyle{every node}=[shape=rectangle,fill=none,draw=none,minimum size=0cm,inner sep=2pt]
\tikzstyle{every path}=[draw=black,line width = 0.5pt]

\draw (0,0) rectangle (6.4,6.4);
\node at (5,1.3){\large $\varphi^{2\ell}(1)$};
\node at (0.2,0.2){\tiny 1};
\node at (0.5,0.5){\tiny 0};		
\node at (0.8,0.8){\tiny 0};	
\node at (1.1,1.1){\tiny 1};
\draw[dotted] (1.3,1.3)--(1.8,1.8);
\node at (2,2){\tiny 1};
\draw[dotted] (2.7,2.7)--(3.1,3.1);
\node at (3.8,3.8){\tiny 1};
\draw[dotted] (4.5,4.5)--(4.9,4.9);
\node at (5.6,5.6){\tiny 1};
\draw[dotted] (5.9,5.9)--(6.3,6.3);

\foreach \y in {10,20,30}
{
	\draw (0,\y) rectangle (6.4,\y+6.4);
	\node at (5,1.3+\y){\large $\varphi^{2\ell}(0)$};
	\node at (0.2,0.2+\y){\tiny 0};
	\node at (0.5,0.5+\y){\tiny 1};		
	\node at (0.8,0.8+\y){\tiny 1};	
	\node at (1.1,1.1+\y){\tiny 0};
	\draw[dotted] (1.3,1.3+\y)--(1.8,1.8+\y);
	\node at (2,2+\y){\tiny 0};
	\draw[dotted] (2.7,2.7+\y)--(3.1,3.1+\y);
	\node at (3.8,3.8+\y){\tiny 0};
	\draw[dotted] (4.5,4.5+\y)--(4.9,\y+4.9);
	\node at (5.6,\y+5.6){\tiny 0};
	\draw[dotted] (5.9,5.9+\y)--(6.3,\y+6.3);
}
\foreach \y in {0,10,20}
{	
	\node at (3,1.5+6.4+\y){$\vdots$};
} 

\tikzstyle{every path}=[draw=red,line width = 1pt, ->]
\draw (-0.1,0.3) to [bend left = 5] node [left] {\color{red}$q$} (-0.1,12);
\draw (1.8,12.2) to [bend left = 5] node [left] {\color{red}$q$} (1.8,23.8);
\draw (3.6,24) to [bend left = 5] node [left] {\color{red}$q$} (3.6,35.6);
\draw (0.2,12) to [bend left = 20] node [above] {\color{red}$p$} (0.2+1.6,12);
\draw (2,23.8) to [bend left = 20] node [above] {\color{red}$p$} (2.0+1.6,23.8);
\draw (3.8,35.6) to [bend left = 20] node [above] {\color{red}$p$} (3.8+1.6,35.6);
\end{tikzpicture}
\end{center}
\caption{Structure of Case 3.1 morphisms with  $(p,q) =(2^\ell +1,2^{2\ell} (2^\ell-1)+2^\ell +1)$ where $\ell$ is odd.}
\label{fig:case3-1}
\end{figure}
\medskip

{\bf Case 3.2}
\[
\varphi\colon 0\mapsto 
\begin{bmatrix} 
 1 & *\\ 0 & 1 
\end{bmatrix}, \quad 
1\mapsto 
\begin{bmatrix}
	0 & 1\\ 1 & 1
\end{bmatrix}
\]

In this case we will prove that the word is not recurrent in direction $(2,1)$. More precisely, we show that $(w_{2i,i})_{i\in\N}=10^\omega$. Clearly $w_{0,0}=1$. We prove $w_{2i,i}=0$ for all $i\ge 1$ by induction on $i$. The base case $w_{2,1}=0$ is easily verified. Now let $i>1$ and suppose that $w_{2i',i'}=0$ for all $1\le i'<i$. If $i$ is even, then $w_{2i,i}=\varphi(w_{i,\frac i2})_{0,0}=0$, where the last equality comes from the induction hypothesis with $i'=\frac i2$ and the fact that  $\varphi(0)_{0,0}=0$. If $i$ is odd, then $w_{2i,i}=\varphi(w_{i,\frac{i-1}{2}})_{0,1}$. Remark that $w_{i,\frac{i-1}{2}}$ is an element in a right column of a $2\times 2$ block which is an image of $0$ or $1$. An element $w_{i-1,\frac{i-1}{2}}$ (which is equal to $0$ by induction hypothesis with $i'=\frac{i-1}{2}$) is an element in the same block which is situated to the left of $w_{i,\frac{i-1}{2}}$. Due to the forms of $\varphi(0)$ and $\varphi(1)$, if a left element is 0, then the right element in the same line is $1$. So, $w_{i,\frac{i-1}{2}}=1$, hence $w_{2i,i}=\varphi(1)_{0,1}=0$.
\medskip

{\bf Case 4}

We can suppose that 
\[
\varphi\colon 
0\mapsto 
\begin{bmatrix} 
	* & *\\ 0 & * 
\end{bmatrix}, \quad 
1\mapsto 
\begin{bmatrix} 
 	1 & 0\\ 1 & 1 
\end{bmatrix}
\]
for otherwise $\varphi(1)=\left[\begin{smallmatrix}
    1 & 1 \\
    1 & 1
    \end{smallmatrix}\right]$.
\medskip

In this case we will prove that for all $n\in\N$, the factor $0^{2^n-1}$ occurs along the direction $(2^n-1,1)$. More precisely, for all $n\in\N$, we have $w_{j(2^n-1),j}=0$ for every $j\in[\![1, 2^n-1]\!]$. First, an easy induction on $n$ shows that there are $0$ just above the diagonal from upper left to lower right in the images $\varphi^n(1)$ for all $n\ge 1$, namely $\varphi^n(1)_{2^n-j,j}=0$ for all $n\ge 1$ and all $j\in[\![1,2^n-1]\!]$. For example, for $n=3$, we have
\[
\varphi^3(1)
=\varphi^2
\begin{bmatrix}
	1 & 0\\ 1 & 1 
\end{bmatrix}
=\begin{bmatrix}[cccc|cccc]
	1	& \mathbf{0}	& *	& *	& *	& * & * & * \\
    1	& 1	& \mathbf{0}	& *	& *	& * & * & * \\
    1	& 0	& 1 & \mathbf{0} & * & * & * & * \\ 
    1   & 1	& 1	& 1 & \mathbf{0} & * & * & * \\ 
	\hline
    1   & \mathbf{0}	& * & * & 1 & \mathbf{0} & * & * \\ 
    1   & 1	& \mathbf{0} & * & 1 & 1 & \mathbf{0} & * \\ 
    1   & \mathbf{0}	& 1 & \mathbf{0}	& 1	& 0 & 1 & \mathbf{0} \\ 
    1   & 1	& 1	& 1	& 1	& 1	& 1	& 1
\end{bmatrix}.
\]
Second, since $w_{i,0}=1$ for all $i\in\N$, we obtain that, for all $n,k\in\N$, the square factor of size $2^n$ occurring at position $(2^n k,0)$ is equal to $\varphi^n(1)$. Therefore, we have $w_{2^n k + 2^n-j,j}=0$ for all $n,k\in\N$ and $j\in\{1,\ldots,2^n-1\}$. The claim follows by considering the latter equality with $k=j-1$.
\end{proof}

The previous theorem gives a characterization of strong uniform recurrence along all directions for fixed points of bidimensional square binary morphisms of size $2$. For larger sizes of the morphism, we gave several conditions that are either necessary (Proposition~\ref{prop:suff-not-nec}) or sufficient (Propositions~\ref{proposition:main morphic} and~\ref{prop:SURDcor-not-necessary}). An open problem is to find a necessary and sufficient condition in general (see Section~\ref{sec:perspectives}).

We end this section by a small discussion on the SSURDO notion. First we provide an example of SSURDO aperiodic word. Then, we give an example of a SURD word that is not SSURDO.

\begin{proposition}\label{prop:SSURDO}
Let $\varphi$ be the square binary morphism defined by 
\[	
\varphi\colon 
    0\mapsto 
    \begin{bmatrix}
    1 & 0  &0 \\
    1 & 0 & 1 \\
    1 & 0 & 0
    \end{bmatrix}, \quad 
    1\mapsto
	\begin{bmatrix}
    1 & 0  &1 \\
    1 & 0 & 1 \\
    1 & 0 & 0
    \end{bmatrix}.
\]
The fixed point $\varphi^\omega(1)$ is SSURDO. 
\end{proposition}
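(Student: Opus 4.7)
The approach rests on the key structural observation that $\varphi(0)$ and $\varphi(1)$ coincide at every entry except at position $(2,2)$, where $\varphi(a)_{2,2}=a$.

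A routine induction on $N$ then yields the following: for every $a\in\{0,1\}$ and every $i,j\in[\![0,3^N-1]\!]$, the value $\varphi^N(a)_{i,j}$ is independent of $a$ unless $(i,j)=(3^N-1,3^N-1)$, in which case $\varphi^N(a)_{3^N-1,3^N-1}=a$. The inductive step uses the unfolding $\varphi^N(a)_{i,j}=\varphi\bigl(\varphi^{N-1}(a)_{\lfloor i/3\rfloor,\lfloor j/3\rfloor}\bigr)_{i\bmod 3,\,j\bmod 3}$: the outer $\varphi$ kills the dependence on the inner letter unless $(i\bmod 3,j\bmod 3)=(2,2)$, in which case one recurses. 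Equivalently, writing $w=\varphi^\omega(1)$, the value $w_{i,j}$ depends on the coarse letter $w_{\lfloor i/3^N\rfloor,\lfloor j/3^N\rfloor}$ only at the \emph{special} positions $(i,j)\equiv(3^N-1,3^N-1)\pmod{3^N}$, where it equals that coarse letter.

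Next, I will establish the SSURDO property for size $\mathbf{1}$ with the uniform bound $b_1=3$ via a short case analysis. Coprimality of $\mathbf{q}$ forces at least one of $q_1,q_2$ to be nonzero modulo $3$. If $q_1\not\equiv 0\pmod 3$, then $(p_1+\ell q_1)\bmod 3$ takes all three values as $\ell$ ranges over any three consecutive integers; residue $0$ forces $w=1$ (since the corresponding column is constant) and residue $1$ forces $w=0$, so whichever letter $w_\mathbf{p}$ equals, it appears within any window of length $3$. If $q_1\equiv 0\pmod 3$, then $q_2\not\equiv 0\pmod 3$ and $p_1\bmod 3$ is constant along the line: in the subcases $p_1\equiv 0$ and $p_1\equiv 1$ the whole sequence equals $w_\mathbf{p}$, while in the subcase $p_1\equiv 2$ the variation of $(p_2+\ell q_2)\bmod 3$ again produces both letters within any three consecutive $\ell$'s.

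For the general case, fix $\mathbf{s}$, choose $N$ minimal with $3^N\geq\max(s_1,s_2)$, and set $b:=3^{N+1}$. Since consecutive special positions are $3^N$ apart in each coordinate and $s_1,s_2\leq 3^N$, the factor $F$ of size $\mathbf{s}$ at $\mathbf{p}$ contains at most one special position $(i_0,j_0)$; hence $F$ is entirely determined by the residue $(\rho_1,\rho_2):=\mathbf{p}\bmod 3^N$ together with (when a special position exists) the single coarse letter $w_{k_1,k_2}$, where $(k_1,k_2):=(\lfloor i_0/3^N\rfloor,\lfloor j_0/3^N\rfloor)$. Now any window $[\ell_0,\ell_0+b-1]$ of length $3^{N+1}$ contains three consecutive multiples of $3^N$, yielding three consecutive values of $m$ with $\ell=3^N m$; for each such $\ell$ the congruence $\ell\mathbf{q}\equiv\mathbf{0}\pmod{3^N}$ preserves both $(\rho_1,\rho_2)$ and the cell-overflow pattern of $F$. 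When no special position exists, the factor at $\mathbf{p}+\ell\mathbf{q}$ automatically equals $F$. Otherwise, equality holds iff $w_{k_1+mq_1,k_2+mq_2}=w_{k_1,k_2}$, and applying the size-$\mathbf{1}$ SSURDO property (established above with bound $3$) at origin $(k_1,k_2)$ and direction $\mathbf{q}$ guarantees this for at least one of the three consecutive values of $m$. The main obstacle is the careful bookkeeping of special positions and cell-overflow behaviour under a shift by a multiple of $3^N$; once this is settled, the reduction to the singleton case is immediate.
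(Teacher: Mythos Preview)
Your proof is correct and follows essentially the same approach as the paper's. Both arguments rest on the observation that $\varphi(0)$ and $\varphi(1)$ differ only at the corner $(2,2)$, extend this to $\varphi^N$, establish the size-$\mathbf{1}$ SSURDO property with bound $3$, and then reduce the general size case to the size-$\mathbf{1}$ case by restricting to shifts $\ell$ that are multiples of $3^N$ (so that residues modulo $3^N$ are preserved) and applying the singleton bound to the coarse letter. The only organizational difference is that the paper splits the general case into ``$f$ lies entirely inside one $\varphi^i$-block'' versus ``$f$ straddles a block boundary'' and handles the latter via the unique extension of a $3^i\times 3^i$ block to a $(2\cdot 3^i-1)\times(2\cdot 3^i-1)$ factor, whereas you treat both cases uniformly through the ``at most one special position'' observation; this is arguably cleaner, but the underlying mechanism is identical.
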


\begin{proof}
Let $w=\varphi^\omega(1)$ and $\mathbf{q}$ be any direction. By definition of $\varphi$, for every position $\mathbf{p}\not\equiv (2,2) \pmod{3}$, we have $w(\mathbf{p})=w(\mathbf{p}\bmod{3})$. If follows that $w(\mathbf{p})= w(\mathbf{p}+3\mathbf{q})$ for all such $\mathbf{p}$. Consider now a position $\mathbf{p}\equiv (2,2)\pmod{3}$. Since $\gcd(\mathbf{q})=1$, we have $\mathbf{p}+\mathbf{q}\not \equiv (2,2)\pmod{3}$ and $\mathbf{p}+2\mathbf{q}\not \equiv (2,2)\pmod{3}$. So $w(\mathbf{p}+\mathbf{q})=w((\mathbf{p}+\mathbf{q})\bmod{3})$ and $w(\mathbf{p}+2\mathbf{q})=w((\mathbf{p}+2\mathbf{q})\bmod{3})$. By checking all the possible values modulo $3$ of $\mathbf{p}+\mathbf{q}$ and $\mathbf{p}+2\mathbf{q}$, we can verify that $w(\mathbf{p}+\mathbf{q})\neq w(\mathbf{p}+2\mathbf{q})$. So the letter $w(\mathbf{p})$ along the direction $\mathbf{q}$ is firstly repeated within a distance two, then it is repeated every three letters.

Now consider a position $\mathbf{p}$ and a factor $f$ of size $\mathbf{s}$ occurring at position $\mathbf{p}$. Let $i=\lceil \max (\log_3 \mathbf{s})\rceil$. We will show that $f$ occurs along $\mathbf{q}$ with gaps bounded by $3^{i+1}$. To do so, we will consider a covering of the grid by the square factors $\varphi^i(0)$ and $\varphi^i(1)$ and study the  position of the factor $f$ relatively to this covering; see Figure~\ref{fig:SSURDO}.
\begin{figure}[htbp]
\centering
\scalebox{0.7}{
\begin{tikzpicture}[scale=1]
	\clip(-0.6,-0.6) rectangle (6.3,4.8);
	\tikzstyle{every node}=[shape=rectangle,fill=none,draw=none,minimum size=0cm,inner sep=2pt]
	\tikzstyle{every path}=[draw=black,line width = 0.5pt]
	\draw[fill=gray!50] (1.8,3.5) rectangle (2.9,4.1);	
	\node at (2.35,3.8){$f$};
	\draw[fill=gray!50] (4,1.3) rectangle (5.1,1.9);	
	\foreach \x in {0,0.5,1,1.5,2}
{
	\draw (3*\x,0) to (3*\x,6.5);
	\draw (0,3*\x) to (7,3*\x);
}	
	\node[fill=gray!50] at (4.55,1.6){$f'$};
	\draw[<->] (0,-0.2)  to  node [below] {$3^i$}  (1.5,-0.2);
	\draw[<->] (-0.2,0)  to  node [left] {$3^i$}  (-0.2,1.5);
	
	\tikzstyle{every path}=[draw=black,line width = 1pt, ->]
	\draw (0,0)  to  node [left] {$\mathbf{p}$}  (1.8,3.5);
	\draw (0,0)  to  node [above] {$\mathbf{p'}$}  (4,1.3);
\end{tikzpicture}
}
\caption{The factor $f$ at position $\mathbf{p}$ occurs ``completely'' inside a factor of the form $\varphi^i(0)$ or $\varphi^i(1)$, while the factor $f'$ (of the same size) at position $\mathbf{p'}$ does not.}
\label{fig:SSURDO}
\end{figure}

If $f$ occurs ``completely'' inside a factor $\varphi^i(0)$ or $\varphi^i(1)$, i.e.\ if
\[
	\exists \mathbf{k}\in\N^2,\quad 
    3^i\mathbf{k} \le \mathbf{p}\le \mathbf{p}+\mathbf{s} < 3^i(\mathbf{k}+\mathbf{1}),
\]
then we use the previous observation about the occurrence of any letter every three positions along $\mathbf{q}$ to conclude that $f$ occurs infinitely often along $\mathbf{q}$ with gaps bounded by $3^{i+1}$.

Now, suppose that $f$ does not ``completely'' occur inside a factor $\varphi^i(0)$ or $\varphi^i(1)$, i.e.\ that
\[
	\exists \mathbf{k}\in\N^2,\quad 
    3^i\mathbf{k} \le \mathbf{p} < 3^i(\mathbf{k}+\mathbf{1})
\]
but there exists $j\in\{1,2\}$ such that 
\[
    p_j+s_j\ge 3^i(k_j+1)
\]
where $\mathbf{p}=(p_1,p_2)$,  $\mathbf{s}=(s_1,s_2)$ and  $\mathbf{k}=(k_1,k_2)$.
Then $\mathbf{p}+\mathbf{s} < 3^i(\mathbf{k}+\mathbf{2})$ by definition of $i$. Consider the factor $z\colon [\![0,3^i-1]\!]\times[\![0,3^i-1]\!]\to A$ of size $(3^i,3^i)$ at position $3^i\mathbf{k}$: for all $\mathbf{i}\in[\![0,3^i-1]\!]\times[\![0,3^i-1]\!]$, $z(\mathbf{i})=w(\mathbf{i}+3^i\mathbf{k})$.
This factor $z$ corresponds exactly to a square factor of the grid, that is either $\varphi^i(0)$ or $\varphi^i(1)$. Hence it occurs along $\mathbf{q}$ from the position $3^i\mathbf{k}$ infinitely many times with gaps bounded by $3^{i+1}$. Now, an easy recurrence shows that $\varphi^j(0)$ and $\varphi^j(1)$ coincide everywhere except in position $(3^j-1,3^j-1)$ for any $j\in\N$. It follows that any factor of size $(3^i, 3^i)$ occurring at a position of the form $3^{i}(x,y)$ extends in a unique way to a factor of size $(2\cdot 3^{i}-1, 2\cdot 3^i-1)$ occurring at the same position. Applying this to the factor $z$, we deduce that distances between consecutive occurrences of $f$ along $\mathbf{q}$ from the position $\mathbf{p}$ coincide with the distances between consecutive occurrences of $z$ along $\mathbf{q}$ from the position $3^i\mathbf{k}$. Hence the conclusion.
\end{proof}

Thanks to Theorem~\ref{thm:characterization}, we are able to show that SURD does not imply SSURDO, as illustrated by the following example.

\begin{example}
\label{ex:SURD_isnot_SSURDO}
SURD and SSURDO properties define two distinct classes of words. Consider the fixed point $w$ of the square binary morphism $\varphi$ defined by 
\[
	\varphi\colon 
    0\mapsto 
    \begin{bmatrix}
    1 & 1 \\
    1 & 1  
    \end{bmatrix}, \quad 
    1\mapsto
	\begin{bmatrix}
    0 & 0 \\
    1 & 0 
    \end{bmatrix},
\]
which is SURD by Theorem~\ref{thm:characterization}. We can show that for the size $\mathbf{s}=(1,1)$, the direction $\mathbf{q}=(1,0)$ and the translations $\mathbf{p}_n=(2^{n+1}-1,2^n-1)$ with $n\in\N$, the words $\dir{(w^{(\mathbf{p}_n)})}{q}{s}$ begins with $\bar{a}a^{3\cdot 2^n}\bar{a}$ where $a\in\{0,1\}$, 
by observing that $\mathbf{p}_{n+1}=2\mathbf{p}_n+(1,1)$. This is illustrated in Figure~\ref{fig:SURD_isnot_SSURDO}.
\begin{figure}[htbp]
\centering
\scalebox{0.8}{
\begin{tikzpicture}[overlay,remember picture]
	\tikzstyle{every path}=[draw=white,line width = 1pt]
	\coordinate (a) at ( $ (pic cs:bloc0a)  - (0.1,0.08)  $);
	\coordinate (b) at ( $ (pic cs:bloc0b)  + (0.2,0.28) $);
	\draw[fill=gray!50] (a) rectangle (b);
	\coordinate (a) at ( $ (pic cs:bloc1a)  - (0.1,0.08)  $);
	\coordinate (b) at ( $ (pic cs:bloc1b)  + (0.2,0.28) $);
	\draw[fill=gray!50] (a) rectangle (b);
	\coordinate (a) at ( $ (pic cs:bloc2a)  - (0.1,0.08)  $);
	\coordinate (b) at ( $ (pic cs:bloc2b)  + (0.2,0.28) $);
	\draw[fill=gray!50] (a) rectangle (b);
	\coordinate (a) at ( $ (pic cs:bloc3a)  - (0.1,0.08)  $);
	\coordinate (b) at ( $ (pic cs:bloc3b)  + (0.2,0.28) $);
	\draw[fill=gray!50] (a) rectangle (b);
\end{tikzpicture}
$\begin{array}{c|ccccccccccccccccccccccccccc}
	7
	&0&0
	&0&0
	&0&0&0&0
	&1&1&1&1&1&1&1&\tikzmark{bloc3a}1\tikzmark{D2b}
	&0&0&0&0&0&0&0&0&0&0&0\tikzmark{bloc3b}\\
	6
	&1&0
	&1&0
	&1&0&1&0
	&1&1&1&1&1&1&\tikzmark{D2a}1&1
	&1&0&1&0&1&0&1&0&1&0&1\\
	5
	&0&0
	&0&0
	&0&0&0&0
	&0&0&1&1&0&0&1&1
	&0&0&0&0&0&0&0&0&0&0&0\\
	4
	&1&0
	&1&0
	&1&0&1&0
	&1&0&1&1&1&0&1&1
	&1&0&1&0&1&0&1&0&1&0&1\\
	3
	&1&1
	&1&1
	&0&0&0&\tikzmark{bloc2a}0\tikzmark{D1b}\tikzmark{O2}
	&1&1&1&1&1&1&1&1
	&1&1&1&1&0\tikzmark{bloc2b}&0&0&0&1&1&1\\
	2
	&1&1
	&1&1
	&1&0&\tikzmark{D1a}1&0
	&1&1&1&1&1&1&1&1
	&1&1&1&1&1&0&1&0&1&1&1 \\
	1
	&0&0
	&1&\tikzmark{bloc1a}1\tikzmark{D0b}\tikzmark{O1}
	&0&0&0&0
	&0&0&1\tikzmark{bloc1b}&1&0&0&1&1
	&0&0&1&1&0&0&0&0&0&0&1\\
	0
	&1&\tikzmark{bloc0a}0\tikzmark{O0}
	&\tikzmark{D0a}1&1
	&1&0\tikzmark{bloc0b}&1&0
	&1&0&1&1&1&0&1&1
	&1&0&1&1&1&0&1&0&1&0&1\\\hline
	&0&1&2&  3&4&5& 6&7&8&9&10&11&12&13&14&15&16&17&18&19&20&21&22&23&24&25&26
	\end{array}$
\begin{tikzpicture}[overlay,remember picture]
\tikzstyle{every path}=[draw=red,line width = 1pt]
\coordinate (x) at ( $ (pic cs:O0)  + (0.12,0.3)$);
\coordinate (y) at ( $ (pic cs:O0)  - (0.12,0.15)$);
\draw (x.south) --++(0,-0.4) --++(-0.4,0)--++(0,0.4)--++(0.4,0)--++(0,-0.4);
\coordinate (a) at ( $ (pic cs:D0a)  - (0.1,0.1) $);
\coordinate (b) at ( $ (pic cs:D0b) + (0.2,0.38) $);
\draw (a) rectangle (b);
\draw[->] (y) to [bend right = 60]  (a);

\tikzstyle{every path}=[draw=blue,line width = 1pt]
\coordinate (x) at ( $ (pic cs:O1)  + (0.12,0.3)$);
\draw (x.south) --++(0,-0.4) --++(-0.4,0)--++(0,0.4)--++(0.4,0)--++(0,-0.4);
\coordinate (a) at ( $ (pic cs:D1a)  - (0.1,0.1) $);
\coordinate (b) at ( $ (pic cs:D1b) + (0.2,0.38) $);
\draw (a) rectangle (b);
\draw[->] (x) to [bend left = 10]  (a);

\tikzstyle{every path}=[draw=purple,line width = 1pt]
\coordinate (x) at ( $ (pic cs:O2)  + (0.12,0.3)$);
\draw (x.south) --++(0,-0.4) --++(-0.4,0)--++(0,0.4)--++(0.4,0)--++(0,-0.4);
\coordinate (a) at ( $ (pic cs:D2a)  - (0.1,0.1) $);
\coordinate (b) at ( $ (pic cs:D2b) + (0.2,0.38) $);
\draw (a) rectangle (b);
\draw[->] (x) to [bend left = 10]  (a);
\end{tikzpicture}
}
	\caption{A prefix of the fixed point of}
    \hspace{2.5cm}$\varphi\colon 
    0\mapsto 
    \begin{bmatrix}
    1 & 1 \\
    1 & 1  
    \end{bmatrix}, \quad 
    1\mapsto
	\begin{bmatrix}
    0 & 0 \\
    1 & 0 
    \end{bmatrix}$.
    \label{fig:SURD_isnot_SSURDO}
\end{figure}
It follows that $w$ is not SSURDO.
\end{example}

\section{Non-morphic bidimensional SURD words}
\label{sec:construction}

In this section we provide a construction of non-morphic bidimensional  SURD words. To construct such a word $w\colon \N^2\to A$ (where $A$ is any alphabet of size at least $2$), we proceed recursively. The construction is illustrated in Figure~\ref{fig:non-morphic}.
\begin{figure}[htb]
\[
\begin{matrix}
\vdots & \vdots & \vdots & \vdots & \vdots & \vdots & \vdots \\
a 		& \cdot & a 	& \cdot & a 	& \cdot  	& a 	& \cdots \\
b 		& c 	& \cdot & \cdot & b 	& c 		& \cdot & \cdots \\
a 		& d 	& a 	& \cdot & a 	& d 		& a 	& \cdots \\
\cdot 	& \cdot	& \cdot	& \cdot	& \cdot	& \cdot  	& \cdot & \cdots \\
a 		& \cdot & a 	& \cdot & a 	& \cdot  	& a 	& \cdots \\
b 		& c 	& \cdot & \cdot & b 	& c 		& \cdot & \cdots \\
a 		& d 	& a 	& \cdot & a 	& d 		& a 	& \cdots
\end{matrix}
\]
\caption{Construction of a non-morphic SURD bidimensional word.}
\label{fig:non-morphic}
\end{figure}

\noindent\textbf{Step 0}. Pick some $a\in A$ and for each $(i,j)\in\N^2$, put $w(2i,2j)=a$. 
\medskip

\noindent\textbf{Step 1}.
Fill anything you want in positions (0,1), (1,0) and (1,1). For each $(i,j)\in\N^2$, put $w(4i,4j+1)=w(0,1)$, $w(4i+1,4j)=w(1,0)$, $w(4i+1,4j+1)=w(1,1)$. Note that the filled positions are doubly periodic with period 4.
\medskip

\noindent\textbf{Step $n\ge 2$}. At step $n$, we have filled all the positions $(i,j)$ for $i,j<2^{n}$, and the positions with filled values are doubly periodic with period $2^{n+1}$. Let $S$ be the set of pairs $(k,\ell)$ with $k,\ell<2^{n+1}$ which have not been yet filled in. Fill anything you want in the positions from $S$. Now for each $(k,\ell)$ and each $(k',\ell')\in S$, define $w(2^{n+2}k+k', 2^{n+2}\ell+\ell'
)=w(k',\ell')$.  Note that the filled positions are doubly periodic with period $2^{n+2}$.

\begin{proposition}
\label{proposition:toeplitz}
The bidimensional infinite word $w$ defined by the construction above is SURD. More precisely, for all $\mathbf{s}\in\N^2$, the prefix of size $\mathbf{s}$ of $w$ occurs infinitely often along any direction with gaps at most $2^{\lceil \log_2(\max\mathbf{s})\rceil}$.
\end{proposition}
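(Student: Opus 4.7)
The plan is to exploit the doubly-periodic structure that the construction builds into $w$ at each step, and then convert that periodicity into a uniform gap along every direction.

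Set $k=\lceil \log_2(\max\mathbf{s})\rceil$, so that $\max\mathbf{s}\le 2^k$ and the prefix $p$ of size $\mathbf{s}$ of $w$ is entirely contained in the square $[\![\mathbf{0},(2^k-1,2^k-1)]\!]\subseteq[0,2^k)^2$. My first step is to prove, by induction on the step index of the construction, the invariant that at an appropriate stage every position of $[0,2^k)^2$ has been assigned a letter, and the already-filled portion of $w$ admits both $(2^k,0)$ and $(0,2^k)$ as periods. The base cases ($k\le 1$) follow directly from Steps~$0$ and $1$, whose explicit extension formulas are periodic with periods $2$ and $4$. For the inductive step, I would use that Step~$n\ge 2$ fills the new cells in $S$ and then copies them with a power-of-$2$ period, so combining this new periodicity with the periodicities inherited from earlier steps (all powers of $2$) via a least common multiple shows that, once the square $[0,2^k)^2$ has been entirely filled, both $(2^k,0)$ and $(0,2^k)$ are periods of $w$ on that square.

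Granting the invariant, the gap bound is immediate. Fix a direction $\mathbf{q}=(q_1,q_2)\in\N^2$ with $\gcd(q_1,q_2)=1$. Since $(2^k,0)$ and $(0,2^k)$ are both periods of $w$, so is every $\Z$-linear combination of them; in particular, $2^k\mathbf{q}=2^kq_1\,(1,0)+2^kq_2\,(0,1)$ is a period. Hence $w(\mathbf{i}+2^k\mathbf{q})=w(\mathbf{i})$ holds for every $\mathbf{i}\in[\![\mathbf{0},\mathbf{s}-\mathbf{1}]\!]$, which is exactly the statement that the block $p$ reoccurs at position $2^k\mathbf{q}$. Iterating, $p$ occurs at every position $\ell\cdot 2^k\mathbf{q}$ with $\ell\in\N$, so the gaps between consecutive occurrences of the letter $\dir{w}{q}{s}(0)=p$ in the unidimensional word $\dir{w}{q}{s}$ are bounded by $2^k=2^{\lceil \log_2(\max\mathbf{s})\rceil}$. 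Because this bound depends only on $\mathbf{s}$ and not on $\mathbf{q}$, the same $b=2^k$ witnesses the SURD condition.

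The main obstacle is the inductive verification of the double-periodicity invariant at the sharp scale $2^k$. One must pinpoint the precise step at which $[0,2^k)^2$ first becomes completely filled, check that no later step modifies any value already placed inside that square (which is automatic, since at every Step $n\ge 2$ the set $S$ consists by definition of positions not yet filled), and carefully combine the power-of-$2$ periods introduced successively by Steps~$0,1,2,\ldots$ to conclude that both $(2^k,0)$ and $(0,2^k)$ are periods of the filled portion on $[0,2^k)^2$. Everything else is a one-line application of double periodicity.
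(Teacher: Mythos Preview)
Your approach is essentially the same as the paper's: exploit the doubly-periodic structure built into $w$ by the construction to conclude that the square prefix of size $(2^k,2^k)$ recurs along every direction with a uniform gap, and hence so does the smaller prefix $p$. The paper's proof simply quotes the construction's invariant (``at step $k$ we have filled all the positions $\mathbf{i}$ for $\mathbf{i}<(2^k,2^k)$, and the positions with filled values are doubly periodic with period $2^{k+1}$'') and concludes directly.

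However, your claimed period $2^k$ is off by a factor of $2$. The square $[0,2^k)^2$ is first completely filled only at the end of step~$k$, and the periods introduced at the successive steps are $2,4,\ldots,2^{k+1}$; their least common multiple is $2^{k+1}$, not $2^k$. Concretely, for $k=1$ the position $(1,0)$ is filled at Step~1 while $(3,0)$ is still empty and only gets assigned (arbitrarily) at a later step, so in general $w(3,0)\ne w(1,0)$ and $(2,0)$ is \emph{not} a period of the filled portion. Even more starkly, for $k=0$ your claim would force gap $1$, i.e.\ $w$ constant along every direction. Thus the bound your argument actually yields is $2^{k+1}=2^{\lceil\log_2(\max\mathbf{s})\rceil+1}$. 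This matches the paper's own proof, which also arrives at $2^{k+1}$; the exponent in the displayed statement of the proposition is therefore off by one, but the SURD conclusion is of course unaffected.
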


\begin{proof}
Let $p$ be the prefix of $w$ of  size $\mathbf{s}$ and let $\mathbf{q}$ be a direction. We show that the square prefix $p'$ of size $(2^k, 2^k)$ with $k=\lceil\log_2(\max\mathbf{s})\rceil$ appears within any consecutive $2^{k+1}$ positions along $\mathbf{q}$, hence this is also true for $p$ itself. By construction, at step $k$ we have filled all the positions $\mathbf{i}$ for $\mathbf{i}<(2^k,2^k)$, and the positions with filled values are doubly periodic with periods $(2^{k+1},0)$ and $(0,2^{k+1})$. Therefore the factor of size $(2^k,2^k)$ occurring at position $2^{k+1}\mathbf{q}$ in $w$ is equal to $p'$. The claim follows.
\end{proof} 

Observe that the morphic words satisfying Corollary~\ref{cor:1} for $s=2$ can be obtained by this construction. This construction can be generalized for any $s\in\N$ instead of $2$. Moreover, on each step we can choose as a period any multiple of a previous period.

\begin{proposition}
\label{proposition:toeplitz non morphic}
Among the bidimensional infinite words obtained by the construction above, there are uncountably many words which are not morphic.
\end{proposition}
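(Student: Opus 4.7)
The plan is a straightforward cardinality argument. I will show that the construction yields at least $2^{\aleph_0}$ pairwise distinct bidimensional words, while there are only countably many pure morphic bidimensional words over the finite alphabet $A$. Since $|A|\ge 2$, the uncountably many remaining constructed words must all be non-morphic, which gives the conclusion.

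To lower-bound the number of words produced, I will exhibit an explicit sequence of positions $\mathbf{p}_n=(2^{n+1}-1,\,2^{n+1}-1)$ for $n\ge 2$ at which the choice is free and never overwritten. The key point is to check that $\mathbf{p}_n$ is still unfilled at the start of step $n$, so that it belongs to the set $S$ of that step. This is a short residue-class computation. Step $0$ fills only positions with both coordinates even, while $\mathbf{p}_n$ is odd-odd. Step $1$ fills the positions whose residue modulo $4$ lies in $\{(0,1),(1,0),(1,1)\}$, whereas $\mathbf{p}_n\equiv(3,3)\pmod 4$. For $2\le m\le n-1$, step $m$ fills those positions whose residue modulo $2^{m+2}$ lies in $S\subseteq[\![0,2^{m+1}-1]\!]^2$; but since $n+1\ge m+2$, we have $\mathbf{p}_n\equiv(2^{m+2}-1,\,2^{m+2}-1)\pmod{2^{m+2}}$, and the bound $2^{m+2}-1\ge 2^{m+1}$ places this residue outside $[\![0,2^{m+1}-1]\!]^2$. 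Hence $\mathbf{p}_n\in S$ at step $n$, the value $w(\mathbf{p}_n)\in A$ is chosen freely there, and later steps (which only touch currently empty positions) leave it alone. Since the positions $\mathbf{p}_2,\mathbf{p}_3,\ldots$ are pairwise distinct and the choice at each is arbitrary in $A$, this yields at least $|A|^{\aleph_0}\ge 2^{\aleph_0}$ pairwise distinct constructed words.

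For the upper bound, the set of pure morphic bidimensional words over $A$ is countable: such a word is specified by a finite amount of data, namely the size $\mathbf{s}\in\N^2$ of the underlying morphism (countably many choices), the map $\varphi\colon A\to A^{[\![\mathbf{0},\mathbf{s}-\mathbf{1}]\!]}$ (finitely many for each $\mathbf{s}$ since $A$ is finite), and a starting letter on which $\varphi$ is prolongable. A countable union of finite sets is countable.

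Combining the two bounds, at most countably many of the continuum many constructed words are pure morphic, so uncountably many are non-morphic, as required. The only nontrivial step is the residue-class verification that $\mathbf{p}_n$ belongs to $S$ at step $n$; I expect this to be the main (though short) technical obstacle, after which the conclusion is immediate.
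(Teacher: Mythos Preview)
Your proof is correct and follows the same cardinality argument as the paper: uncountably many constructed words versus only countably many morphic words. The paper's own proof is a two-line sketch that simply asserts both facts, whereas you have carefully verified that the construction genuinely produces a continuum of distinct words by exhibiting the free positions $\mathbf{p}_n=(2^{n+1}-1,2^{n+1}-1)$ and checking the residue classes; this extra detail is welcome and the computation is sound.
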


\begin{proof} 
The construction provides uncountably many bidimensional infinite words. However, there exist only countably many morphic words.
\end{proof}

\section{Perspectives}
\label{sec:perspectives}
 
There remain many open questions related to the new notions of directional recurrence introduced in this paper. For example, we would like to generalize the characterization given by Theorem~\ref{thm:characterization} to any morphism size.
 
\begin{question}
Find a characterization of strong uniform recurrence along all directions for bidimensional square binary morphisms of size bigger than 2.
\end{question}

Another question is the missing relation between different notions of recurrence indicated in Figure~\ref{fig:links_recurrence}.

\begin{question} Prove or disprove: Strong uniform recurrence along all directions implies uniform recurrence. \end{question}

The original motivation to introduce new notions of recurrence comes from the study of return words. In the unidimensional case, a \emph{return word to $u$} in an infinite word $w$ is a factor starting at an occurrence of $u$ in $w$ and ending right before the next occurrence of $u$ in $w$. For instance, the set of return words to $u=011$ in the Thue-Morse word  is equal to $\{011010, 011001, 01101001, 0110\}$. When the infinite word $w$ is uniformly recurrent, there are finitely many return words. By coding each return word to $u$ by its order of occurrence in $w$, one obtains the \emph{derivative of $w$ with respect to the prefix $u$}. Pursuing our example, the derivative of the Thue-Morse word with respect to $011$ begins with $12341243123431241234124$. Using these derivatives, Durand obtained in 1998 the following characterization of primitive pure morphic words, i.e.\ fixed points of morphisms having a primitive incidence matrix. 

\begin{theorem}[Durand~\cite{Durand--1998}]
A word is primitive pure morphic if and only if the number of its derivatives is finite.
\end{theorem}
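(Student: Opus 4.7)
The plan is to prove both implications via return words, using a desubstitution mechanism that links the substitutive structure of $w$ to the combinatorics of its derivatives. Write $\mathcal{R}_w(u)$ for the set of return words to a prefix $u$ (finite whenever $w$ is uniformly recurrent) and $\partial_u w$ for the derivative of $w$ with respect to $u$.

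For the forward direction, assume $w=\varphi^\omega(a)$ with $\varphi$ primitive. First recall that primitivity forces $w$ to be uniformly recurrent, so $\mathcal{R}_w(u)$ is finite and $\partial_u w$ is well defined for every prefix $u$. The key observation is that $\partial_u w$ is itself pure morphic via a canonically induced morphism. Picking $n$ so large that each $\varphi^n(b)$ contains $u$ as a factor (which exists by primitivity), the occurrences of $u$ inside $\varphi^n(b)$ partition it, modulo a boundary adjustment absorbed by neighbouring blocks, into a concatenation of return words to $u$. Reading this decomposition letter by letter gives a morphism on the alphabet $\mathcal{R}_w(u)$ whose fixed point is $\partial_u w$. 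For any longer prefix $v\succ u$, one has $\partial_v w=\partial_{v'}(\partial_u w)$ where $v'$ is the projection of $v$ in the return-word alphabet. Iterating, every derivative of $w$ reduces to a derivative of some induced system living over an alphabet whose size is bounded by a fixed combinatorial invariant of $\varphi$ (essentially its linear factor complexity); hence only finitely many derivatives can arise.

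For the backward direction, suppose $w$ has only finitely many derivatives. Applying the pigeonhole principle to the sequence of derivatives with respect to prefixes of increasing length yields two distinct prefixes $u\prec v$ with $\partial_u w=\partial_v w$. This equality encodes a self-similarity: the return words to $v$ decompose as a fixed deterministic concatenation of return words to $u$, which defines a morphism $\sigma$ on $\mathcal{R}_w(u)$. By iterating this renormalisation and a second pigeonhole, $u$ can be chosen so that $w$ itself is the fixed point $\sigma^\omega(r_0)$ under the natural coding that identifies each return word with a letter, where $r_0$ is the return word starting $w$. Finiteness of the derivative set forces uniform recurrence of $w$, and uniform recurrence transfers to primitivity of $\sigma$: in a uniformly recurrent word every return word occurs within every sufficiently long factor, so every letter of $\mathcal{R}_w(u)$ appears in every $\sigma^m(r)$ once $m$ is large enough.

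The main obstacles will be the boundary effects in both directions. In the forward direction, $\varphi^n(b)$ typically begins or ends in the middle of a return word, so the clean decomposition must be organised around a \emph{proper} prefix $u$ (one whose occurrences are left-special in a controlled way), or one passes to a sufficiently high power of $\varphi$ and enlarges the alphabet to encode the residual edge pieces. In the backward direction, the very first occurrence of $u$ at position $0$ plays an asymmetric role, so one must ensure $\sigma$ is prolongable on the letter coding the initial return word and reconcile any discrepancy between internal and initial return words. Executing these technical adjustments while preserving primitivity is the heart of Durand's argument; the rest of the proof is bookkeeping around the desubstitution operation $u\mapsto \partial_u w$.
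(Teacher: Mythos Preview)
The paper does not prove this theorem; it is simply quoted from Durand's 1998 paper as motivation in the Perspectives section, with no argument given. There is therefore nothing in the present paper to compare your proposal against.

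For what it is worth, your sketch does follow the broad outline of Durand's original proof via return words and induced substitutions on the return alphabet. Two remarks, though. First, Durand's theorem actually characterises \emph{primitive substitutive} sequences (letter-to-letter images of fixed points of primitive morphisms), not fixed points themselves; the statement as reproduced in the paper is slightly loose on this point. Your backward direction reflects this: the morphism $\sigma$ lives on the return-word alphabet and has the \emph{derived} sequence as its fixed point, while $w$ is only recovered from that fixed point by the coding sending each letter to its return word---so you obtain substitutive, not pure morphic in general. Second, the remark that finiteness of the derivative set ``forces uniform recurrence of $w$'' is circular, since derivatives are only defined for uniformly recurrent $w$ in the first place; uniform recurrence is a standing hypothesis in Durand's framework, not a consequence.
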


In dimension higher than one, it is not clear how to generalize the notion of primitivity of a morphism in order to study the uniform recurrence along directions (see~Remark~\ref{rem:primitivity}). A generalization of Durand's result to a bidimensional setting was investigated by Priebe~\cite{Priebe--2000}. In that generalization, words are replaced by tilings, the primitive substitutive property by self-similarity and the notion of derived tilings involves Voronoï cells. Recall that a Voronoï tessallation is a partition of the plane into regions, called Voronoï cells, based on the distance to a set of given points, called \emph{seeds} \cite{Voronoi--1907}. The Voronoï cell of a seed consists of all the points in the plane that are closer to it than to any other seed. Priebe aimed towards a characterization of self-similar tilings in terms of derived Voronoï tessellations and proved the following result.

\begin{theorem}[Priebe \cite{Priebe--2000}]
Let $\mathcal{T}$ be a tiling of the plane.
	\begin{itemize}
    \item If $\mathcal{T}$ is self-similar, then the number of its different derived Voronoï tilings is finite (up to similarity).
    \item If the number of its different derived Voronoï tilings is finite (up to similarity), then $\mathcal{T}$ is pseudo-self-similar.
	\end{itemize}
\end{theorem}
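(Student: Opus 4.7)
The plan is to treat the two implications separately, relying in both directions on the dictionary between patches of $\mathcal{T}$ (treated as \emph{seeds} for the Voronoï construction) and the Voronoï cells they induce. Throughout, I identify the derived Voronoï tiling with respect to a patch $P$ with the Voronoï tessellation of the plane whose seed set is the set of occurrences of $P$ in $\mathcal{T}$.

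For the forward direction, assume $\mathcal{T}$ is self-similar with linear expansion $\phi\colon\mathbb{R}^2\to\mathbb{R}^2$ and substitution rule $\sigma$. My first step is to establish a ``covariance'' statement: for any patch $P$ occurring in $\mathcal{T}$, the set of occurrences of $\sigma(P)$ in $\mathcal{T}$ is, up to a uniformly bounded correction, the image under $\phi$ of the set of occurrences of $P$. This is where I would use the hierarchical decomposition of $\mathcal{T}$ given by $\sigma$. Consequently, the derived Voronoï tiling with respect to $\sigma(P)$ is similar to that with respect to $P$ via $\phi$. Iterating $\sigma$, every derived Voronoï tiling of $\mathcal{T}$ becomes similar to one produced from a patch drawn from a finite list of ``bounded-size'' prototiles together with finitely many boundary configurations. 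Since $\mathcal{T}$ has finite local complexity (a consequence of self-similarity), this list is finite, and finiteness up to similarity follows.

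For the converse, assume the collection of derived Voronoï tilings of $\mathcal{T}$ is finite up to similarity. Fix a nested exhaustion of $\mathcal{T}$ by patches $P_1\subset P_2\subset\cdots$ centered at a fixed point, and let $V_n$ denote the derived Voronoï tiling of $\mathcal{T}$ with respect to $P_n$. Pigeonhole then provides indices $m<n$ with $V_m$ and $V_n$ similar via some expanding map $\phi$. I would promote this similarity of derived tilings to a pseudo-self-similarity of $\mathcal{T}$ itself by arguing that $\phi(\mathcal{T})$ and $\mathcal{T}$ agree at the level of their Voronoï structure, hence they are mutually locally derivable, which is the definition of pseudo-self-similarity.

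The main obstacle I anticipate is in the converse direction. While the pigeonhole argument readily supplies a candidate expansion $\phi$, transferring the similarity from $V_n$ down to $\mathcal{T}$ requires careful control of how much information a Voronoï cell actually encodes about the surrounding $\mathcal{T}$-patch and of the boundary effects near cell interfaces. In particular, one must show that the similarity between derived tilings is rigid enough to lift to arbitrarily large patches of $\mathcal{T}$ but flexible enough that the residual ambiguity is absorbed in the qualifier ``pseudo'' — explaining why one cannot improve the conclusion from pseudo-self-similarity to genuine self-similarity in this direction.
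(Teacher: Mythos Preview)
This theorem is not proved in the present paper: it is stated in the Perspectives section as a result of Priebe~\cite{Priebe--2000} and cited purely as background for the discussion of multidimensional derivatives. There is therefore no ``paper's own proof'' to compare your proposal against; any genuine comparison would have to be with Priebe's original article, which lies outside this manuscript.

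As for the sketch itself, the overall architecture is reasonable---use the hierarchy in the forward direction and pigeonhole in the converse---but both parts remain programmatic rather than proofs. In the forward direction, the assertion that the occurrence set of $\sigma(P)$ is $\phi$ applied to the occurrence set of $P$ ``up to a uniformly bounded correction'' is not generally true at the level of individual patches: a copy of $\sigma(P)$ in $\mathcal{T}$ need not arise as the $\sigma$-image of a single supertile containing $P$, since it may straddle several level-one supertiles. One needs instead to relate derived tilings for patches of comparable size via the recognizability/unique composition property, and then argue that only finitely many similarity classes arise from patches up to the recognizability radius. In the converse direction you correctly flag the main difficulty, but the step ``$\phi(\mathcal{T})$ and $\mathcal{T}$ agree at the level of their Voronoï structure, hence they are mutually locally derivable'' is exactly the nontrivial content of Priebe's argument and cannot be asserted without further work: a similarity between two derived Voronoï tessellations does not a priori carry enough local information to reconstruct a local derivation rule between $\mathcal{T}$ and $\phi(\mathcal{T})$.
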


The bidimensional words we are considering in this paper are a particular case of tilings (see for instance Figure~\ref{fig:voronoi}, which has been reproduced from \cite{Priebe--2000}) where the letters correspond to colored unit squares (1 for black and 0 for white).
\begin{figure}[htbp]
\centering
\begin{tabular}{ccc}
\includegraphics[scale=0.215]{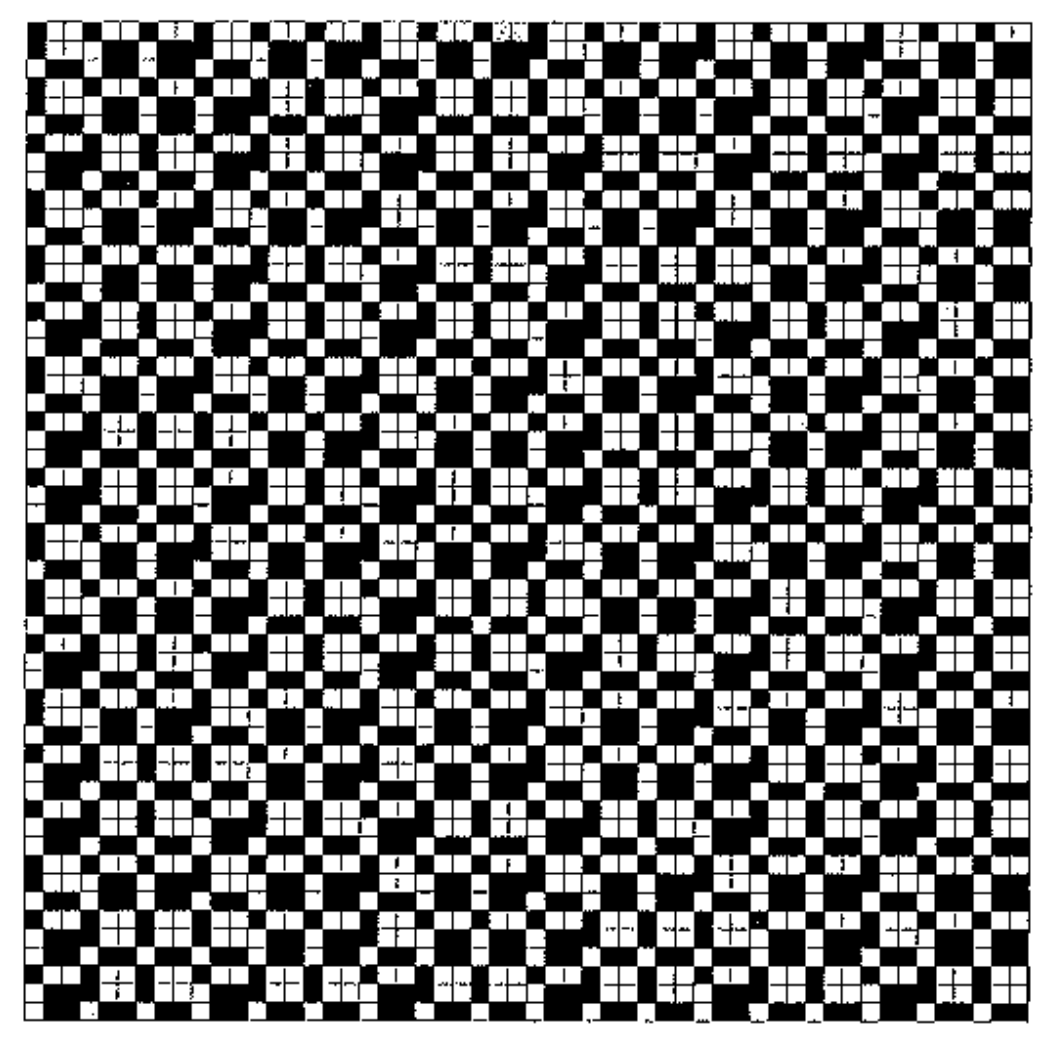}&
\includegraphics[scale=0.2]{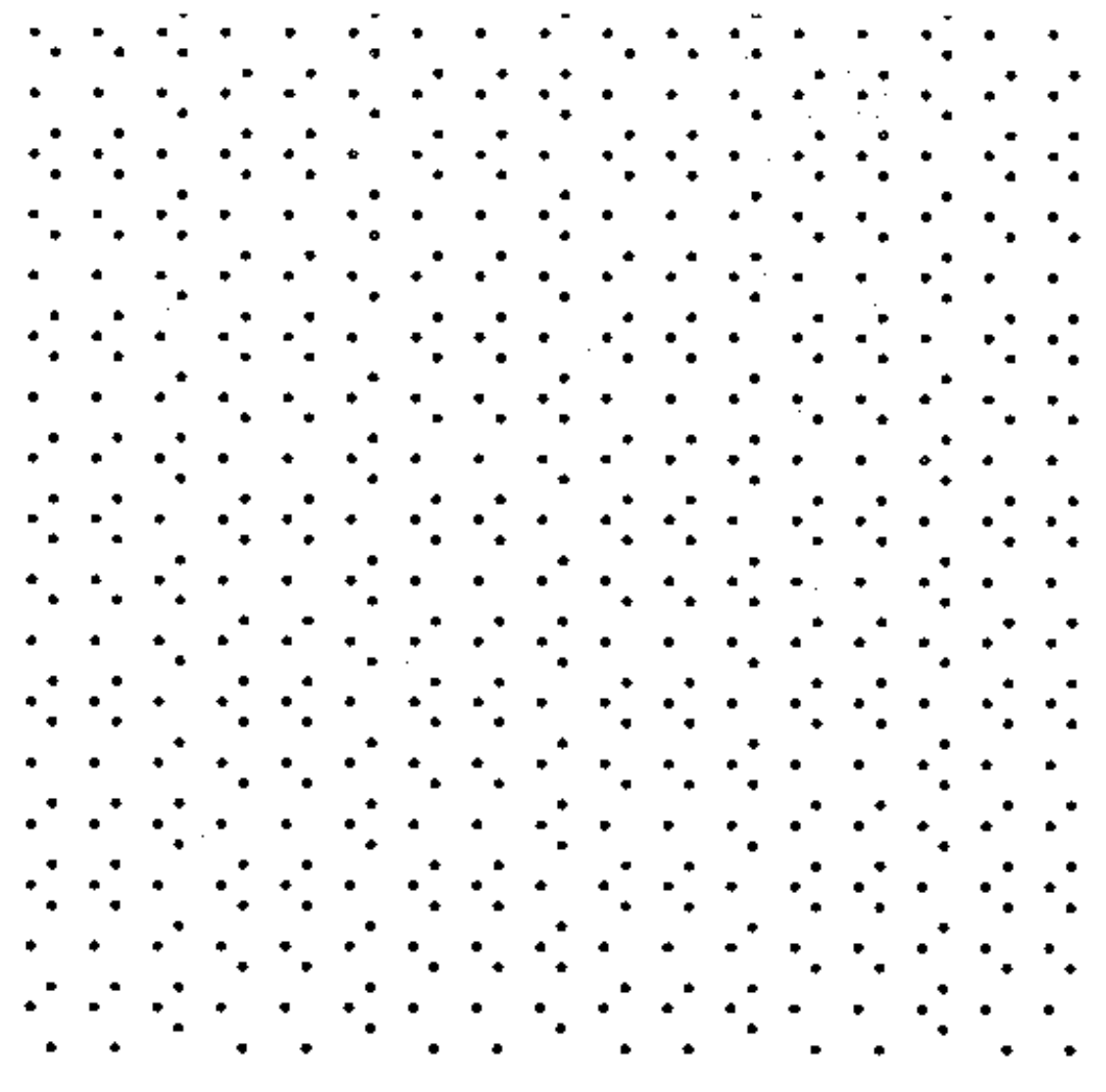}&
\includegraphics[scale=0.2]{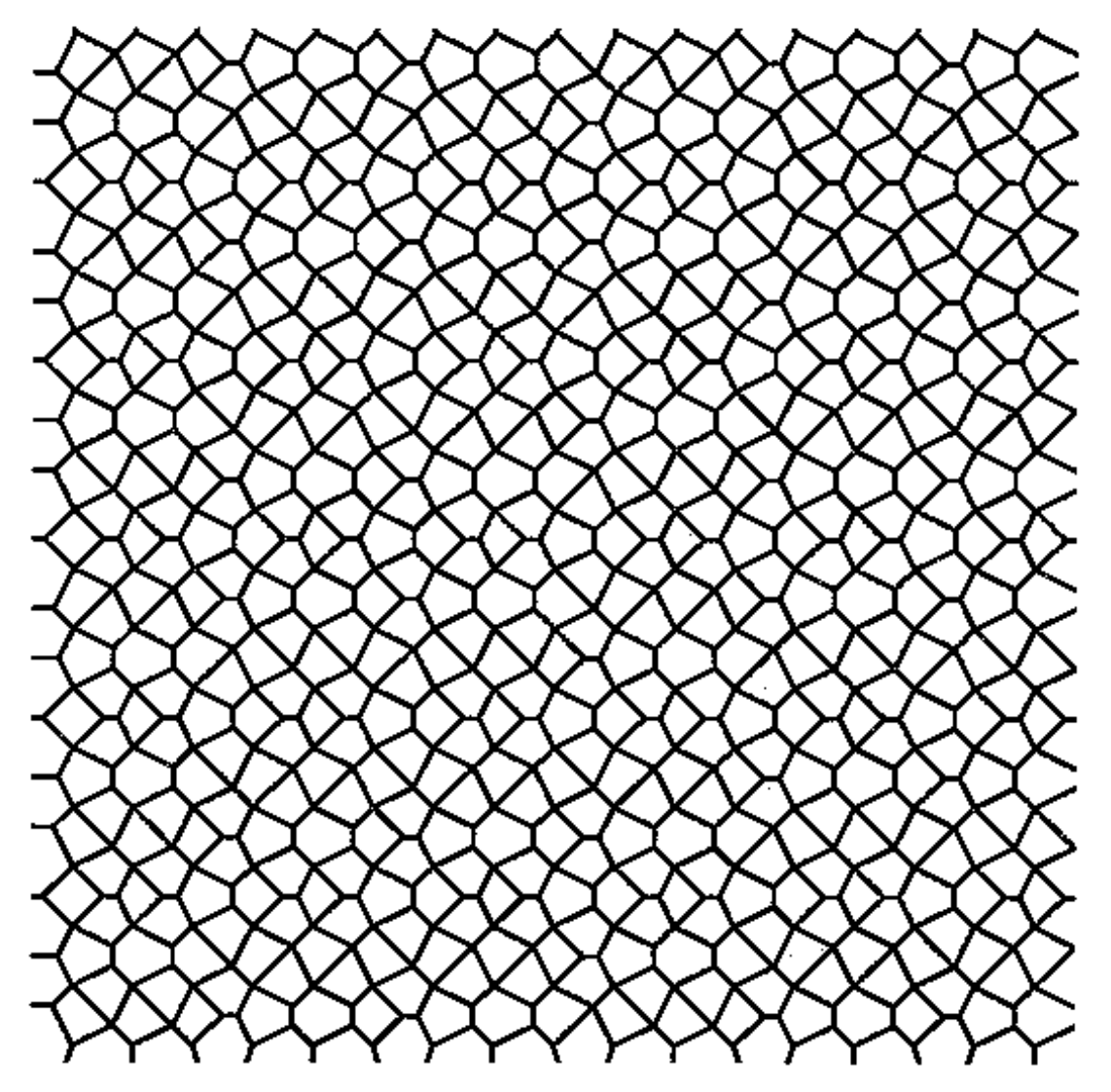}\\
(a) & (b) & (c)
\end{tabular}
\caption{A tiling (a), the set of positions where the factor $u$ occurs (b) with}
 $u\ =\ $\begin{tikzpicture}[baseline]
	\tikzstyle{1}=[shape=rectangle,fill=black,draw=black,,minimum size=3,inner sep=4pt]
 	\tikzstyle{0}=[shape=rectangle,fill=white,draw=black,minimum size=3,inner sep=4pt]
 	\node[0] at (1.25,-0.03) {};
 	\node[1] at (1.25,0.25) {};
 	\node[0] at (1.54,0.25) {};
 	\node[1] at (1.54,-0.03) {};
\end{tikzpicture},
 and the associated Voronoï tessellation (c).
\label{fig:voronoi}
\end{figure}

The main drawback of this notion of derived tilings is that, starting from a bidimensional word, we do not obtain another bidimensional word in general (as illustrated in Figure~\ref{fig:voronoi}). Hence the following questions are natural. 

\begin{question}
\label{qu:der1}
Find a differential operator for $d$-dimensional words with respect to its prefixes, that is, an operator 
\[
	D\colon (A^{\N^d},\N^d)\to B^{\N^d},
    (w,\mathbf{s})\mapsto D_\mathbf{s}(w)
\]
where $A$ and $B$ are potentially distinct alphabets and $D_\mathbf{s}(w)$ designates the \emph{derivative of $w$ with respect to its prefix of size $\mathbf{s}$}, such that the finiteness of the set
\[
	\{D_\mathbf{s}(w)\colon s\in\N^d\}
\]
would provide us with some nice property of the $d$-dimensional infinite word $w$ (such that being primitive substitutive if one thinks of Durand's theorem).
\end{question}

Here is a variant of the previous question. 

\begin{question}
\label{qu:der2}
Find a differential operator for $d$-dimensional words with respect to its prefixes, that is, an operator 
\[
	D\colon (A^{\N^d},\N^d)\to B^{\N^d},
    (w,\mathbf{s})\mapsto D_\mathbf{s}(w)
\]
where $A$ and $B$ are potentially distinct alphabets and $D_\mathbf{s}(w)$ designates the \emph{derivative of $w$ with respect to its prefix of size $\mathbf{s}$}, such that for all $w\in A^{\N^d}$ and all $\mathbf{s},\mathbf{t}\in\N^d$ we have 
\[
	D_\mathbf{s}(D_\mathbf{t}(w))=D_\mathbf{u}(w)
\]
for some well-chosen size $\mathbf{u}\in\N^d$.
\end{question}

The notion of SURD words introduced in the present paper provides us with a way of deriving $d$-dimensional words, which generalizes the unidimensional derivatives. The idea is as follows. Let $w\colon\N^d\mapsto A$ be a SURD $d$-dimensional word and let $\mathbf{s}\in\N^d$. 
Being SURD implies that there exists an integer $r\ge 1$ such that for all directions $\mathbf{q}$, there are at most $r$ distinct return words to the letter $w_{\mathbf{q},\mathbf{s}}(0)$ in the unidimensional word $w_{\mathbf{q},\mathbf{s}}$. We define the {\em derivative of $w$ with respect to the prefix of size $\mathbf{s}$} to be the $d$-dimensional word $D_\mathbf{s}(w)\colon\N^d\mapsto [\![0,r-1]\!]$ such that for all direction $\mathbf{q}$, $((D_\mathbf{s}w)_{\ell\mathbf{q}})_{\ell\in\N}$ is the unidimensional derivative of $w_{\mathbf{q},\mathbf{s}}$ with respect to its first letter obtained by coding the return words to $w_{\mathbf{q},\mathbf{s}}(0)$ in order of appearance from $0$ to $r-1$. 

For example, if $\varphi^\omega(1)$ is the fixed point of the morphism given in Example~\ref{ex:SURD_isnot_SSURDO} and depicted in Figure~\ref{fig:SURD_isnot_SSURDO}, then its derivative $D_{(1,2)}(\varphi^\omega(1))$ with respect to the prefix of size $(1,2)$ is depicted in Figure~\ref{fig:der1}. 
\begin{figure}[htb]
\centering
\scalebox{0.8}{
$
\begin{array}{c|ccccccccccccccccccccccccccc}
7 & 1 & 0 & 1 & 1 & 0 & 1 & 1 & 0 & 1 & 0 & 1 & 1 & 1 & 0 & 0 & 1 & 1 & 0 & 1 & 1 & 0 & 1 & 1 & 1 & 1 & 1 & 1 \\
6 & 1 & 1 & 1 & 0 & 0 & 1 & 1 & 1 & 1 & 1 & 1 & 0 & 1 & 1 & 2 & 1 & 2 & 1 & 1 & 1 & 2 & 3 & 2 & 1 & 1 & 1 & 1 \\
5 & 0 & 1 & 1 & 1 & 1 & 0 & 1 & 1 & 1 & 1 & 1 & 1 & 1 & 1 & 0 & 0 & 1 & 1 & 1 & 1 & 2 & 1 & 1 & 1 & 1 & 4 & 1 \\
4 & 0 & 1 & 2 & 0 & 4 & 1 & 0 & 0 & 1 & 1 & 1 & 1 & 2 & 1 & 2 & 1 & 0 & 1 & 2 & 0 & 3 & 1 & 2 & 0 & 2 & 1 & 2 \\
3 & 0 & 0 & 0 & 3 & 0 & 1 & 0 & 1 & 1 & 0 & 1 & 1 & 0 & 0 & 0 & 3 & 1 & 0 & 3 & 0 & 1 & 2 & 1 & 1 & 0 & 0 & 0 \\
2 & 1 & 1 & 2 & 0 & 2 & 1 & 1 & 1 & 1 & 1 & 2 & 1 & 2 & 1 & 1 & 0 & 2 & 1 & 2 & 0 & 2 & 1 & 1 & 1 & 1 & 0 & 2 \\
1 & 1 & 1 & 1 & 1 & 1 & 1 & 1 & 0 & 1 & 1 & 1 & 1 & 0 & 1 & 0 & 1 & 1 & 1 & 1 & 1 & 1 & 1 & 1 & 0 & 1 & 1 & 0 \\
0 & 0 & 1 & 1 & 0 & 0 & 0 & 1 & 1 & 0 & 1 & 1 & 0 & 1 & 1 & 0 & 0 & 0 & 1 & 1 & 0 & 0 & 0 & 1 & 1 & 0 & 0 & 0 \\
\hline
 & 0 & 1 & 2 & 3 & 4 & 5 & 6 & 7 & 8 & 9 & 10 & 11 & 12 & 13 & 14 & 15 & 16 & 17 & 18 & 19 & 20 & 21 & 22 & 23 & 24 & 25 & 26 
\end{array}
$
}
     \caption{The derivative of the fixed point $\varphi^\omega(1)$ of Figure~\ref{fig:SURD_isnot_SSURDO} w.r.t.\  the prefix of size $(1,2)$ when coding the return words in order of appearance along every direction.}
    \label{fig:der1}
\end{figure}
We know from Corollary~\ref{cor:1} that return words to the prefix of size $(1,2)$ in $(\varphi^\omega(1))_{\mathbf{q},(1,2)}$ have length at most $4$ for any direction $\mathbf{q}$. Therefore, there could be at most $4^3=64$ such return words. For instance, the letters on the diagonal correspond to the unidimensional derivative of $\varphi^\omega(1)_{(1,1),(1,2)}$ with respect to its first letter $\left[\begin{smallmatrix}
    0  \\
    1  \\
    \end{smallmatrix}\right]$:
\[
    \underbrace{
    \left[\begin{smallmatrix}
    0  \\
    1  \\
    \end{smallmatrix}\right]
    \left[\begin{smallmatrix}
    1  \\
    0  \\
    \end{smallmatrix}\right]
    \left[\begin{smallmatrix}
    1  \\
    1  \\
    \end{smallmatrix}\right]}_{0}
    \underbrace{
    \left[\begin{smallmatrix}
    0  \\
    1  \\
    \end{smallmatrix}\right]}_{1}
    \underbrace{
    \left[\begin{smallmatrix}
    0  \\
    1  \\
    \end{smallmatrix}\right]
    \left[\begin{smallmatrix}
    0  \\
    0  \\
    \end{smallmatrix}\right]}_{2}
    \underbrace{
    \left[\begin{smallmatrix}
    0  \\
    1  \\
    \end{smallmatrix}\right]
     \left[\begin{smallmatrix}
    1  \\
    0  \\
    \end{smallmatrix}\right]}_{3}
     \underbrace{
    \left[\begin{smallmatrix}
    0  \\
    1  \\
    \end{smallmatrix}\right]
    \left[\begin{smallmatrix}
    1  \\
    0  \\
    \end{smallmatrix}\right]
    \left[\begin{smallmatrix}
    1  \\
    1  \\
    \end{smallmatrix}\right]
    \left[\begin{smallmatrix}
    1  \\
    1  \\
    \end{smallmatrix}\right]}_{4}
     \underbrace{
     \left[\begin{smallmatrix}
    0  \\
    1  \\
    \end{smallmatrix}\right]
    \left[\begin{smallmatrix}
    1  \\
    0  \\
    \end{smallmatrix}\right]
    \left[\begin{smallmatrix}
    1  \\
    1  \\
    \end{smallmatrix}\right]}_{0}
     \underbrace{
     \left[\begin{smallmatrix}
    0  \\
    1  \\
    \end{smallmatrix}\right]}_{1}
     \underbrace{
     \left[\begin{smallmatrix}
    0  \\
    1  \\
    \end{smallmatrix}\right]
     \left[\begin{smallmatrix}
    1  \\
    0  \\
    \end{smallmatrix}\right]
    \left[\begin{smallmatrix}
    1  \\
    1  \\
    \end{smallmatrix}\right]}_{0}
     \underbrace{
     \left[\begin{smallmatrix}
    0  \\
    1  \\
    \end{smallmatrix}\right]}_{1}
     \underbrace{
    \left[\begin{smallmatrix}
    0  \\
    1  \\
    \end{smallmatrix}\right]
    \left[\begin{smallmatrix}
    0  \\
    0  \\
    \end{smallmatrix}\right]}_{2}
    \cdots
\]

In the previous definition, we chose to code the return words with respect to their order of appearance in $w_{\mathbf{q},\mathbf{s}}$ for each $\mathbf{q}$. This means that two occurrences of the same letter $i\in[\![0,r-1]\!]$, one at a position $\ell\mathbf{q}$ and the other at a position $\ell'\mathbf{q}'$ for different directions $\mathbf{q}$ and $\mathbf{q}'$, might represent different return words. For example, in Figure~\ref{fig:der1}, the letter $1$ at position $(0,1)$ corresponds to the return word $\left[\begin{smallmatrix}
    1  \\
    0  \\
    \end{smallmatrix}\right]
    \left[\begin{smallmatrix}
    0  \\
    1  \\
    \end{smallmatrix}\right]$ but the letter $1$ at position $(0,1)$ corresponds to the return word $\left[\begin{smallmatrix}
    1  \\
    0  \\
    \end{smallmatrix}\right]
    \left[\begin{smallmatrix}
    0  \\
    0  \\
    \end{smallmatrix}\right]$. An alternative definition of derivatives would be to code the return words uniformly, i.e.\ independently of the considered direction $\mathbf{q}$. The derivative of $\varphi^\omega(1)$ with respect to the prefix of size $(1,2)$ obtained by following the latter convention is depicted in Figure~\ref{fig:der2}. The codes of the used return words are given in Table~\ref{tab:codes}. Note that the letter at position $(0,0)$ in the derivative is not well defined since in general, the first return words to the prefix of size $\mathbf{s}$ along two different directions $\mathbf{q}$ and $\mathbf{q}'$ are not the same.
\begin{figure}[htb]
\centering
\scalebox{0.8}{
$
\begin{array}{c|ccccccccccccccccccccccccccc}
 7 & 1 & 9 & 7 & 10 & 11 & 3 & 7 & 2 & 1 & 3 & 1 & 4 & 1 & 3 & 6 & 1 & 12 & 9 & 12 & 9 & 11 & 3 & 7 & 3 & 12 & 5 & 12 \\
 6 & 1 & 4 & 6 & 6 & 6 & 5 & 3 & 7 & 12 & 4 & 4 & 6 & 1 & 4 & 6 & 6 & 12 & 4 & 3 & 4 & 12 & 4 & 3 & 5 & 1 & 4 & 6 \\
 5 & 0 & 4 & 6 & 3 & 1 & 2 & 0 & 8 & 0 & 0 & 1 & 3 & 1 & 3 & 6 & 6 & 1 & 4 & 6 & 3 & 11 & 1 & 1 & 3 & 1 & 9 & 6 \\
 4 & 0 & 4 & 7 & 9 & 0 & 4 & 6 & 9 & 1 & 5 & 6 & 4 & 4 & 4 & 6 & 4 & 0 & 4 & 7 & 9 & 0 & 4 & 7 & 9 & 6 & 4 & 10 \\
 3 & 0 & 3 & 6 & 1 & 11 & 4 & 6 & 10 & 1 & 6 & 1 & 1 & 0 & 3 & 6 & 0 & 1 & 3 & 7 & 3 & 12 & 3 & 6 & 16 & 0 & 3 & 6 \\
 2 & 1 & 4 & 4 & 6 & 7 & 6 & 3 & 7 & 1 & 4 & 1 & 4 & 6 & 4 & 8 & 6 & 6 & 4 & 1 & 6 & 11 & 6 & 3 & 7 & 0 & 6 & 5 \\
 1 & 1 & 3 & 1 & 3 & 1 & 4 & 1 & 9 & 1 & 3 & 1 & 3 & 11 & 13 & 6 & 3 & 1 & 3 & 1 & 3 & 1 & 1 & 6 & 9 & 1 & 4 & 6 \\
 0 & ? & 4 & 4 & 5 & 5 & 5 & 4 & 4 & 5 & 4 & 4 & 5 & 4 & 4 & 5 & 5 & 5 & 4 & 4 & 5 & 5 & 5 & 4 & 4 & 5 & 5 & 5 \\
 \hline
 & 0 & 1 & 2 & 3 & 4 & 5 & 6 & 7 & 8 & 9 & 10 & 11 & 12 & 13 & 14 & 15 & 16 & 17 & 18 & 19 & 20 & 21 & 22 & 23 & 24 & 25 & 26 \\
\end{array}
$
}
     \caption{The derivative of the fixed point $\varphi^\omega(1)$ of Figure~\ref{fig:SURD_isnot_SSURDO} w.r.t.\ the prefix of size $(1,2)$ when coding the return words uniformly.}
    \label{fig:der2}
\end{figure}

\begin{table}
    \centering
\begin{tabular}{c|c}
    return word & code \\
    \hline 
    $\left[\begin{smallmatrix}
    0  \\
    1  \\
    \end{smallmatrix}\right]
    \left[\begin{smallmatrix}
    1  \\
    0  \\
    \end{smallmatrix}\right]
    \left[\begin{smallmatrix}
    1  \\
    1  \\
    \end{smallmatrix}\right]
    \left[\begin{smallmatrix}
    1  \\
    1  \\
    \end{smallmatrix}\right]$& $0$ \\
    $\left[\begin{smallmatrix}
    0  \\
    1  \\
    \end{smallmatrix}\right]
    \left[\begin{smallmatrix}
    1  \\
    0  \\
    \end{smallmatrix}\right]$ & $1$ \\
    $\left[\begin{smallmatrix}
    0  \\
    1  \\
    \end{smallmatrix}\right]
    \left[\begin{smallmatrix}
    1  \\
    0  \\
    \end{smallmatrix}\right]
    \left[\begin{smallmatrix}
    1  \\
    1  \\
    \end{smallmatrix}\right]$ & $2$ \\
    $\left[\begin{smallmatrix}
    0  \\
    1  \\
    \end{smallmatrix}\right]$ & $3$ \\
    $\left[\begin{smallmatrix}
    0  \\
    1  \\
    \end{smallmatrix}\right]
    \left[\begin{smallmatrix}
    0  \\
    0  \\
    \end{smallmatrix}\right]$ & $4$ \\
    $\left[\begin{smallmatrix}
    0  \\
    1  \\
    \end{smallmatrix}\right]
    \left[\begin{smallmatrix}
    0  \\
    0  \\
    \end{smallmatrix}\right]
    \left[\begin{smallmatrix}
    1  \\
    1  \\
    \end{smallmatrix}\right]
    \left[\begin{smallmatrix}
    1  \\
    1  \\
    \end{smallmatrix}\right]$ & $5$ \\
\end{tabular} $\quad$
\begin{tabular}{c|c}
    return word & code \\
    \hline 
    $\left[\begin{smallmatrix}
    0  \\
    1  \\
    \end{smallmatrix}\right]
    \left[\begin{smallmatrix}
    1  \\
    1  \\
    \end{smallmatrix}\right]$ & $6$ \\
    $\left[\begin{smallmatrix}
    0  \\
    1  \\
    \end{smallmatrix}\right]
    \left[\begin{smallmatrix}
    1  \\
    1  \\
    \end{smallmatrix}\right]
    \left[\begin{smallmatrix}
    1  \\
    1  \\
    \end{smallmatrix}\right]
    \left[\begin{smallmatrix}
    1  \\
    1  \\
    \end{smallmatrix}\right]$ & $7$ \\
    $\left[\begin{smallmatrix}
    0  \\
    1  \\
    \end{smallmatrix}\right]
    \left[\begin{smallmatrix}
    0  \\
    0  \\
    \end{smallmatrix}\right]
    \left[\begin{smallmatrix}
    1  \\
    1  \\
    \end{smallmatrix}\right]
    \left[\begin{smallmatrix}
    0  \\
    0  \\
    \end{smallmatrix}\right]$ & $8$ \\
    $\left[\begin{smallmatrix}
    0  \\
    1  \\
    \end{smallmatrix}\right]
    \left[\begin{smallmatrix}
    1  \\
    1  \\
    \end{smallmatrix}\right]
    \left[\begin{smallmatrix}
    1  \\
    1  \\
    \end{smallmatrix}\right]
    \left[\begin{smallmatrix}
    0  \\
    0  \\
    \end{smallmatrix}\right]$ & $9$ \\
    $\left[\begin{smallmatrix}
    0  \\
    1  \\
    \end{smallmatrix}\right]
    \left[\begin{smallmatrix}
    1  \\
    0  \\
    \end{smallmatrix}\right]
    \left[\begin{smallmatrix}
    1  \\
    1  \\
    \end{smallmatrix}\right]
    \left[\begin{smallmatrix}
    1  \\
    0  \\
    \end{smallmatrix}\right]$ & $10$ \\
    $\left[\begin{smallmatrix}
    0  \\
    1  \\
    \end{smallmatrix}\right]
    \left[\begin{smallmatrix}
    1  \\
    1  \\
    \end{smallmatrix}\right]
    \left[\begin{smallmatrix}
    1  \\
    1  \\
    \end{smallmatrix}\right]
    \left[\begin{smallmatrix}
    1  \\
    0  \\
    \end{smallmatrix}\right]$ & $11$ \\
\end{tabular} $\quad$ 
\begin{tabular}{c|c}
    return word & code \\
    \hline 
    $\left[\begin{smallmatrix}
    0  \\
    1  \\
    \end{smallmatrix}\right]
    \left[\begin{smallmatrix}
    0  \\
    0  \\
    \end{smallmatrix}\right]
    \left[\begin{smallmatrix}
    1  \\
    1  \\
    \end{smallmatrix}\right]$ & $12$ \\
    $\left[\begin{smallmatrix}
    0  \\
    1  \\
    \end{smallmatrix}\right]
    \left[\begin{smallmatrix}
    1  \\
    1  \\
    \end{smallmatrix}\right]
    \left[\begin{smallmatrix}
    0  \\
    0  \\
    \end{smallmatrix}\right]$ & $13$ \\
    $\left[\begin{smallmatrix}
    0  \\
    1  \\
    \end{smallmatrix}\right]
    \left[\begin{smallmatrix}
    1  \\
    0  \\
    \end{smallmatrix}\right]
    \left[\begin{smallmatrix}
    1  \\
    1  \\
    \end{smallmatrix}\right]
    \left[\begin{smallmatrix}
    0  \\
    0  \\
    \end{smallmatrix}\right]$ & $14$ \\
    $\left[\begin{smallmatrix}
    0  \\
    1  \\
    \end{smallmatrix}\right]
    \left[\begin{smallmatrix}
    1  \\
    1  \\
    \end{smallmatrix}\right]
    \left[\begin{smallmatrix}
    1  \\
    1  \\
    \end{smallmatrix}\right]$& $15$ \\
    $\left[\begin{smallmatrix}
    0  \\
    1  \\
    \end{smallmatrix}\right]
    \left[\begin{smallmatrix}
    0  \\
    0  \\
    \end{smallmatrix}\right]
    \left[\begin{smallmatrix}
    1  \\
    1  \\
    \end{smallmatrix}\right]
    \left[\begin{smallmatrix}
    1  \\
    0  \\
    \end{smallmatrix}\right]$ & $16$ \\ 
    \\
\end{tabular}
   \caption{Codes of return words to the prefix}
   \hspace{0.3cm} $\left[\begin{smallmatrix}
    0  \\
    1  \\
    \end{smallmatrix}\right]$ 
    occurring in Figure~\ref{fig:der2}.
    \label{tab:codes}
\end{table}

We do not know whether one of this definition is a good candidate for answering Question~\ref{qu:der1}. Note that the second definition does not allow us to derive twice (because of the unknown letter at position $\mathbf{0}$), and hence cannot be a good candidate for answering Question~\ref{qu:der2}. In particular, in order to be able to derive twice, the SURD property must be preserved under differentiation.

\begin{question}
Does our first definition of multidimensional derivatives give rise to SURD words when starting from a SURD word?
\end{question}

Another aspect we did not treat in the paper is the symbolic dynamical one. It is well known that in the unidimensional case a word is uniformly recurrent if and only if the corresponding dynamical system is minimal (see e.g.\ \cite{Ferenczi--Monteil--2010}).

\begin{question}
What kind of dynamical properties are reflected by the modifications of the notion of uniform recurrence introduced in the paper?
\end{question}

\section{Acknowledgements}
We are grateful to Mathieu Sablik for inspiring discussions. 
The second author  is partially supported by Russian Foundation of Basic Research (grant 20-01-00488) and by the  Foundation  for  the  Advancement  of  Theoretical  Physics  and  Mathematics “BASIS”. The last author acknowledges partial funding via a Welcome Grant of the Universit\'e de Li\`ege.

\end{document}